\newcommand{\congfang}[1]{{#1}}
\newcommand{\rev}[1]{{#1}}
\newcommand{\ud }{\mathrm{d}}
     \def\BB{\mathbb{B}}
     \def\EE{\mathbb{E}}
     \def\NN{\mathbb{N}}
     \def\RR{\mathbb{R}}
 \def\cB{{\cal  B}}
 \def\cE{{\cal  E}}
 \def\cL{{\cal  L}}
 \def\cN{{\cal  N}}
 \def\cO{{\cal  O}}
 \def\cP{{\cal  P}}
 \def\cQ{{\cal  Q}}
 \def\cR{{\cal  R}}
 \def\cX{{\cal  X}}
 \def\cY{{\cal  Y}}
\def\dirac{\boldsymbol{\delta}}
\def\rmBiLin{\mathrm{BiLin}}
\def\rmBiLinSf{\mathrm{BiLin}_{\EE_{\xi}f}}
\def\rmErr{\mathrm{Err}}
\def\rmExpectErr{\mathrm{ExpectErr}}
\def\rmStoErr{\mathrm{StoErr}}
\DeclareMathOperator*{\argmin}{argmin}
\DeclareMathOperator*{\argmax}{argmax}
\newtheorem{assumption}{Assumption}[section]
\renewcommand*{\thanks}[1]{
	\footnotemark
	\protected@xdef\@thanks{\@thanks
		\protect\footnotetext[\arabic{footnote}]{#1}}%
}
\begin{document}

\title{PAPAL: A Provable PArticle-based Primal-Dual ALgorithm for Mixed Nash Equilibrium}

\def\thefootnote{$^*$}
\footnotetext{Equal Contribution. Alphabetical order.}

\def\thefootnote{$^\dag$}
\footnotetext{Work done at the Hong Kong University of Science and Technology.}

\author{\name Shihong Ding$^*$            \email dingshihong@stu.pku.edu.cn\\
       \addr Peking University
       \AND
       \name Hanze Dong$^*$$^\dag$ \email hendrydong@gmail.com\\
       \addr Salesforce AI Research
       \AND
       \name Cong Fang $^\text{\Letter}$  \email fangcong@pku.edu.cn\\
       \addr Peking University
       \AND  
       \name Zhouchen Lin\email zlin@pku.edu.cn \\
       \addr Peking University
       \AND  
       \name Tong Zhang \email tongzhang@tongzhang-ml.org \\
       \addr University of Illinois Urbana-Champaign
       }

\editor{Francesco Orabona}

\maketitle

\begin{abstract} 
\noindent We consider the non-convex non-concave objective function in two-player zero-sum continuous games. The existence of pure Nash equilibrium requires stringent conditions, posing a major challenge for this problem. To circumvent this difficulty, we examine the problem of identifying a mixed Nash equilibrium, 
where strategies are randomized and characterized by probability distributions over continuous domains.
To this end, we propose PArticle-based Primal-dual ALgorithm (PAPAL) 
tailored for a weakly entropy-regularized min-max optimization over probability distributions. 
 This algorithm employs the stochastic movements of particles to represent the updates of random strategies for the $\epsilon$-mixed Nash equilibrium. We offer a comprehensive convergence analysis of the proposed algorithm, demonstrating its effectiveness.
In contrast to prior research that attempted to update particle importance without movements, PAPAL is the first implementable particle-based algorithm accompanied by non-asymptotic quantitative convergence results, running time, and sample complexity guarantees. 
Our framework contributes novel insights into the particle-based algorithms for continuous min-max optimization in the general non-convex non-concave setting.

 \end{abstract}

\begin{keywords}
  Mixed Nash equilibrium, Particle-based algorithm.
\end{keywords}

\newpage
\section{Introduction}

The problem of finding equilibrium of min-max optimization is a fundamental research topic,
which has been a focus of numerous areas, such as mathematics, statistics, machine learning, economics, and computer science \citep{bacsar1998dynamic,roughgarden2010algorithmic,von1947theory,sinha2017certifying}. In machine learning,  the  applications of minimax optimization have shown great potential in real practice recently, such as 
generative adversarial networks \citep{goodfellow2014generative,salimans2016improved}, adversarial training \citep{ganin2016domain,madry2017towards}, reinforcement learning \citep{busoniu2008comprehensive,silver2017mastering,omidshafiei2017deep}. These applications reveal the potential of broader learning systems beyond empirical risk minimization regime \citep{vapnik1991principles}. Unfortunately, the theoretical properties of these objectives are not well studied, since min-max problems in real problems are usually non-convex non-concave. Practitioners often resort to heuristically implementing gradient-based algorithms for convex-concave min-max problems, which are convenient and scalable.
Despite the potential and popularity of the applications with min-max optimization, the theoretical analysis lags far behind: the investigation on how to find a global solution with a provable algorithm in general min-max problems remains preliminary.
The inherent complexities of min-max optimization impede our understanding of certain unanticipated phenomena and problematic scenarios in practical applications, such as instability, divergence, and oscillation \citep{goodfellow2014generative,salimans2016improved,daskalakis2017training,mescheder2018training,daskalakis2018limit}. It is urgent to study these problems and propose provable algorithms.

The min-max problems are usually written as:
\begin{equation}\label{question}
  \min_{x\in\cX} \max_{y\in\cY}  f(x,y),
\end{equation}
where $f:\cX\times\cY\rightarrow\RR$ is the payoff function on the strategy space $\cX\times\cY$.
The solution
to Eq. \eqref{question}, termed the pure Nash equilibrium,  is a strategy pair $(x_*,y_*)$ that both $\min$ and $\max$ side cannot improve the corresponding loss value unilaterally. The Nash equilibrium is a single point pair in the strategy space. However, it exists only  under very restrictive conditions \citep{dasgupta1986existence}. 
For general min-max problems, such as non-convex non-concave case, the pure Nash equilibrium does not usually exist \citep{arora2017generalization}.
Given these challenges, researchers have explored alternative solutions to general min-max problems.
One potential direction is to solve the general min-max problem  in the sense of local equilibrium under varied  assumptions  \citep{evtushenko1974some,daskalakis2018limit,adolphs2019local,fiez2019convergence,mazumdar2020gradient,jin2020local}.  These assumptions are often strong.  For example,
 \citet{fiez2019convergence} propose ``Differential Stackelberg Equilibrium'' which restricts the behavior of the Hessian matrix.
 \citet{diakonikolas2021efficient} assume the minty variational inequality for the local Nash equilibrium, which is analogous to the Polyak-Łojasiewicz inequality in min-max problems.
\citet{mangoubi2021greedy} consider the local case with several regularity conditions that the function $f$ is smooth, bounded with Lipschitz Hessian. 
It was shown by \citet{daskalakis2021complexity} that finding a local min-max Nash equilibrium without these restrictions is also computationally expensive in general. Moreover, a local  equilibrium is more or less limited compared with a global one.

Another criterion is called \emph{mixed Nash equilibrium} (MNE), which proposes a random strategy over the space.
Consider the extension of  Eq.~\eqref{question} as the following problem,
\begin{align}\label{eq:problem_particle}
    \min_{p\in\mathcal{P}_2(\mathcal{X})} \max_{q\in\mathcal{P}_2(\mathcal{Y})} &\EE_{p(x)} \EE_{q(y)} [f(x,y)]  + \cR(p) - \cR(q),
\end{align}
where the optimization variables $p$ and $q$ are densities of probability measures over  $\mathcal{X}$ and $\mathcal{Y}$, respectively, and $\cR(\cdot)$ is a proper regularizer (such as relative entropy) to make the distributions well-behaved. By choosing some prior distribution $\pi$, the entropy regularizer can be written as 
\begin{align}\label{eq:prior}
 \cR(p) = \lambda \EE_p \log p - \lambda \EE_p \log \pi,
\end{align}
where $\lambda>0$ is the parameter to control the strength of the regularization term. The goal of mixed Nash equilibrium (MNE) is to find a probability distribution pair $(p_*,q_*)$ in the strategy space that balances both sides. The regularization term ensures the problem is mathematically well-posed, and its influence becomes negligible as $\lambda \to 0$. 
Compared with the pure Nash equilibrium, the existence of MNE in min-max problems is guaranteed in the infinite-dimensional compact space \citep{glicksberg1952further}.

Finding the MNE with a provable algorithm is highly non-trivial since Eq.~\eqref{eq:problem_particle} is an infinite-dimensional problem. There have been some early attempts to solve the problem, where the core idea is to use a batch of particles (random samples) to parameterize the strategy and optimize the particles to approximate a MNE. 
\citet{balandat2016minimizing} extend dual averaging
to the min-max problem in continuous probability space, but the convergence rate is not discussed.
\citet{hsieh2019finding} propose a mirror-descent algorithm on probability measures to find MNEs. They show that a particle-based algorithm performs well in min-max problems empirically. However, the theoretical results for their algorithm remain open. 
\citet{domingo2020mean} attempt to propose a mean-field algorithm that uses both the weights and positions of the particles to parameterize the mixed strategy.
However, in their analysis, particles do not move nearly as the weights' updates dominate the particles' movements in their setting. Such an algorithm will require a large number of particles, which is not implementable for high-dimensional problems. 

Recent advances give us inspiration to seek the $\epsilon$-mixed Nash equilibrium ($\epsilon$-MNE) of Eq.~\eqref{eq:problem_particle}, \citet{nitanda2021particle} proposed a particle dual average algorithm with convergence guarantee, which optimizes the two-layer network with the second-order moment and negative entropy regularization successfully. Some other works focus on the fast convergence of continuous mean-field GDA dynamics \citep{lu2022twoscale}, the convergence of quasistatic Wassersetin gradient flow \citep{ma2022provably} and the local convergence of particle algorithm \citep{wang2022anexpon}. Therefore, we aim to address a research question:
\begin{quote}
    \emph{Can we design optimization algorithm that provides provable, quantitative guarantees for solving $\epsilon$-MNE in continuous probability space?}
\end{quote}
In this paper, we first prove the existence of $\epsilon$-MNE with relative entropy regularizers. We also propose the PArticle-based Primal-dual ALgorithm (PAPAL), a mirror proximal primal-dual algorithm in the continuous probability space, where the KL divergence is chosen as the corresponding Bregman divergence (wrt the primal-dual algorithm).
Unlike the finite dimensional case, the sub-problems from the proximal operator in continuous probability space
cannot be obtained in closed form. 
As a result, PAPAL is a double-looped algorithm, where the outer loop performs a mirror proximal primal-dual algorithm and the loop is approximated by the particle solution of the sub-problems.
It is important to note that the challenges of particle-based algorithms primarily stem from errors introduced by both the finite-particle parameterization and the inexact solver. 

The early works \citep{balandat2016minimizing,hsieh2019finding}
provide the idea of mirror descent in continuous probability space, but they only consider the continuous dynamics on the probability space. 
More recently, \citet{domingo2020mean} 
construct a dynamics of particles driven by both particle updates and reweighting. However, the underlying dynamics is dominated by weight updates, which means the proposed algorithm nearly does NOT move particles. Optimization within the probabilistic space, from their perspective, employs importance sampling. This approach is recognized for its vulnerability to the curse of dimensionality \citep{donoho2000high}. Thus, it is still not a real ``implementable'' algorithm with theoretical guarantees. In essence, both the inexact sub-problem solution and the functional estimation by finite particles are not discussed in previous work, which capture the behavior of computation complexity and sample size in real problems. 
To fill the gap, 
we show that such errors can be controlled to guarantee global convergence of the primal-dual algorithm when the prior distribution admits $\pi \propto \exp\left(-\lambda'\Vert x\Vert^{2}\right)$ (Gaussian distribution), where $\lambda'>0$ controls the variance of the prior. 

We also highlight the connection between PAPAL and PDA \citep{nitanda2021particle}, in terms of their high-level motivation. 
Initially developed for constructing sequences within probability space and demonstrating convergence properties for mean-field problems with polynomial sample complexity, PDA served as a source of inspiration for our research.
PAPAL surpasses the scope of PDA by extending its applicability to general min-max problems. \rev{From the perspective of convergence rate, without inner-loop error, PAPAL can achieve linear convergence rate. This relies on the momentum structure in the algorithm and sophisticated selection of hyperparameters. However, the convergence rate of PDA is significantly slower than linear rate.} Specifically, PAPAL introduces a novel sequence construction methodology that diverges from the conventional emphasis on mean-field problems.
We aim to extend the concept by devising multiple sub-problems in minimax optimizations. To achieve this, we have made modifications to the iterative formulation of the primal-dual approach to tailor it to particle approximation, while introducing novel analytical techniques beyond the realm of optimality conditions. To the best of our knowledge, we present the first particle-based algorithm that is not only practically implementable but also capable of approximating an MNE in continuous min-max optimization problems. Moreover, we provide a rigorous global convergence guarantee for our proposed approach.

\paragraph{Contributions.} 
    We propose PAPAL to solve the general $\epsilon$-mixed Nash equilibrium ($\epsilon$-MNE) 
    in two-player zero-sum continuous games with minimal regularization. We validate the existence of MNE within our framework. 
    Our analysis focuses on the convergence of the primal-dual method in addressing MNE. Specifically, we demonstrate that the sampling sub-problems exhibit log-Sobolev properties, and we can ensure global convergence even when utilizing inexact solvers for sub-problems.
    We present quantitative convergence metrics, runtime estimations, and sample complexity for PAPAL. 
    Additionally, we elucidate the analysis of the stochastic gradient variant of the PAPAL algorithm.
    To our knowledge, this is the first demonstration of the computational and sample feasibility of a primal-dual method in a continuous probability framework to resolve min-max challenges.

\section{Related Work}

\paragraph{Min-Max Optimization.} The study of solving min-max problems is quite extensive since its inception with \citep{von1928theory}. 
The gradient-based algorithms, including those presented by \citet{korpelevich1976extragradient,nemirovski1978cezari,chen1997convergence,nemirovski2004prox}, leverages convexity and concavity to obtain the convergence properties.
Notably,
gradient descent ascent (GDA) finds an $\epsilon$-approximate stationary point within $\cO(\log 1/\epsilon)$ iterations for strongly-convex strongly-concave minimax problems, and $\cO(\epsilon^{-2})$ iterations for convex concave games \citep{nemirovski2004prox,nedic2009subgradient}.  
While some alternative gradient methodologies extend to non-convex concave scenarios \citep{namkoong2016stochastic,grnarova2017online,rafique2019non,lu2020hybrid,lin2020gradient},  the general min-max domain remains relatively uncharted. A central challenge emerges from the dependency of general min-max problems on stringent conditions, coupled with a lack of consensus on assumptions. By worst-case evaluations, non-convex non-concave min-max problems can be computationally formidable. 
\citet{daskalakis2009complexity} highlighted the PPAD-completeness of establishing a general pure Nash equilibrium without the condition of convexity/concavity. 
Similarly, \citet{chen2006computing} prove the PPAD-completeness for non-zero two-player mixed Nash equilibria in bimatrix game, and subsequent studies 
\citep{daskalakis2013complexity,rubinstein2016settling} underlined this complexity even for approximated mixed Nash equilibria.  Finding an $(\epsilon,\delta)$-local min-max Nash equilibrim in non-convex non-concave min-max problems is also PPAD-complete \citep{daskalakis2021complexity}.
While recent endeavors \citep{daskalakis2018limit, adolphs2019local, mazumdar2020gradient, jin2020local, mangoubi2021greedy} have delved into the properties of local min-max points in non-convex frameworks, real-world interpretations of the imposed conditions remain elusive. Consequently, general pure Nash Equilibrium's feasibility is widely regarded as remote, barring justifiable restrictive conditions.

\paragraph{Mixed Nash Equilibrium.} Mixed Nash Equilibrium lifts the strategy space to a probability space and convert min-max problem to be infinite-dimensional.
Importantly, the existence of Mixed Nash Equilibrium can be guaranteed when the payoff function is differentiable with Lipschitz continuous gradient  \citep{glicksberg1952further}. Recently, with the growing importance of general min-max problems in deep learning and Generative Adversarial Networks (GAN), the investigation of Mixed Nash Equilibrium  has gained more attention due to its good theoretical properties. The global existence without convexity assumption make it more reasonable in real problems.
\citet{grnarova2017online} study the 
 behavior in a special setting: they consider the online setting of GANs with a two-layer  neural network as the discriminator. \citet{arora2017generalization} study the generalization properties of the Mixed Nash Equilibrium in GAN.  Moreover, there is a line of works that try to solve the Mixed Nash Equilibrium with mirror-descent-like algorithms in continuous probability space.
\citet{balandat2016minimizing} first solve Mixed Nash Equilibrium  with dual averaging algorithm, but the convergence rate is not discussed. 
Earlier discussions between \citep{hsieh2019finding} and \citep{domingo2020mean} render repetition unnecessary. In a word, there are some early attempts to perform mirror proximal algorithms in probability space.  In essence, while preliminary endeavors into mirror proximal algorithms in the probability space have emerged, with empirical evidence underscoring their significance, a rigorous theoretical dissection of "implementable" particle-based algorithms is still in its nascence.

\paragraph{Mean-field Analysis.} 
Mean field analysis focuses on the evolution of particles on infinite-dimensional space \citep{dobrushin1979vlasov}, which can be described by some non-linear partial differential equations (PDE). 
Notably, many non-convex dynamical systems in modern machine learning can be analyzed by lifting the original problem to infinite-dimensional space. 
Such a procedure convexifies the original problem, which has better mathematical properties \citep{bengio2005convex,bach2017breaking}.
The limiting PDE in mean-field analysis can derive the global convergence and well-posed behaviors. For example, the mean-field analysis over the neurons in wide neural networks captures the feature evolution of the optimization process of deep learning \citep{mei2018mean,mei2019mean,yang2020feature,nguyen2020rigorous,fang2021modeling,yang2022efficient,fang2022convex}.  
There have been a number of works based on noisy gradient descent to investigate the quantitative convergence rate when minimizing the nonlinear functionals with entropy regularizers \citep{hu2021mean,nitanda2021particle,chizat2022mean}.  In summary, mean-field analysis stands as a powerful theoretical tool in today's machine learning theory, which helps to convexify high-dimensional problems through randomization (a special lifting to infinite dimensional space).

\section{Preliminary}\label{sec:preli}

\paragraph{Notation.} We use lower-case letters $x, y$ to denote
vectors. For function $f:\RR^d\rightarrow\RR$, 
$\nabla f(\cdot)$ and $\nabla^2 f(\cdot)$ denote its gradient and  Hessian matrix, respectively. 
For function $f:\RR^d\rightarrow\RR^d$, $\nabla f(\cdot)$ denotes the
Jacobian matrix.
For multivariate function $f(x,y)$, $\nabla_x$, $\nabla_y$ denote the gradient over $x$, $y$.
Notation $\Vert\cdot\Vert$ denotes 2-norm for both vector and matrix. 
The continuous probability measures in this paper are considered to be absolutely continuous by default with respect to the Lebesgue measure, which induces density function $p$ and $q$. The smoothness is guaranteed by Gaussian initialization and Gaussian smoothing during the updates.
For twice differentiable function $f:\RR^d\rightarrow\RR$, we say that $f$ is $L_0$-Lipschitz if  $|f(x)-f(y)|\leq L_0 \Vert x-y\Vert$ for any $x,y\in\RR^d$; we say that $f$ has $L_1$-Lipschitz gradient if $\|\nabla f(x)-\nabla f(y)\|\leq L_1 \Vert x-y\Vert$ for any $x,y\in\RR^d$. For any a topological space  
$\mathcal{X}$, $\mathcal{P}(\mathcal{X})$ denotes set of probability measures on $\cX$ and 
$\mathcal{P}_2(\mathcal{X})$ denotes the set of probability measures with bounded second moments.
For density $p$, we define $\cE(p)=\mathbb{E}_p[\log(p)]$, $\mathrm{KL}(p\|q)=\mathbb{E}_p[\log (p/q)]$. For the generalized density for particles, we use Dirac delta distribution $\dirac_{x}$ to represent the Dirac delta distribution at $x$, $\dirac_{\{x_1,\cdots,x_n\}} = n^{-1}\sum_{i=1}^n\dirac_{x_i}$.

\paragraph{Problem Setup.}
Assume that the min-max problem is defined on the joint parameter space $\cX \times \cY$ and a payoff function $f: \cX \times \cY\rightarrow \RR$. 
We consider the bounded function case. For simplicity without loss of generality, the
domain $\cX \times \cY$ is the 
Euclidean space $\RR^m\times\RR^n$ and the co-domain of $f$  is a bounded interval $[-1,1]$.

In our work, we consider the regularized minimax problem defined in Eq.~\eqref{eq:problem_particle}. For the regularization term,  
we set 
\begin{equation}\label{eq:relative_reg}
\cR(p(x))=\EE_{p(x)}\left[\lambda_1\|x\|^{2}+\lambda_2\log(p(x))\right] 
\end{equation}
where $\lambda_1,\lambda_2>0$ are constants to determine the regularization scaling as well as the variance of the prior.
We use $\cL(p,q)$ to denote $\EE_{p(x)} \EE_{q(y)} f(x,y)  + \cR(p) - \cR(q)$.
The mixed Nash equilibrium denotes a distribution pair $(p_*,q_*)$ such that for all $(p,q)\in \cP(\cX )\times \cP(\cY)$, $\cL(p_*,q) \leq \cL(p_*,q_*)\leq \cL(p,q_*)$. Therefore, for any distribution pair $(\bar{p},\bar{q})$, the duality gap $\max\limits_{q\in\cP(\cY)}\mathcal{L}(\bar{p},q)-\min\limits_{p\in\cP(\cX)}\mathcal{L}(p,\bar{q})$ forms the basis for a standard optimality criterion. Formally, we define $\epsilon$-MNE as below.
\begin{definition}[$\epsilon$-MNE]
    A distribution pair $(\bar{p},\bar{q})$ is an $\epsilon$-mixed Nash equilibrium ($\epsilon$-MNE) of function $\mathcal{L}(\cdot,\cdot)$ if $$\max\limits_{q\in\cP(\cY)}\mathcal{L}(\bar{p},q)-\min\limits_{p\in\cP(\cX)}\mathcal{L}(p,\bar{q})\leq \epsilon.$$ When $\epsilon=0$, $(\bar{p},\bar{q})$ is a mixed Nash equilibrium.
\end{definition}

\paragraph{Assumptions.}
\begin{enumerate}[label={\textbf{[A$_\text{\arabic*}$]}}]
    \item There exists $L_0$ such that $f$ is $L_0$-Lipschitz.\label{ass:a1}
    \item $f$ has $L_1$-Lipschitz gradient.\label{ass:a2}
 \item For any $x,y\in \cX\times\cY$, $| f(x,y)|\leq 1$.\label{ass:a4}
\end{enumerate}

We consider the bounded function with growth regularities. The bound of $|f|$ is chosen as 1 for notation simplicity.
Assumptions \ref{ass:a1} and \ref{ass:a2} regularize the growth of $f$, $\nabla f$ respectively and the definition of  Lipschitz continuity was introduced in \textbf{Notation} part. It is clear that our assumption is quite weak so  our setting is general.

\paragraph{Primal-Dual Algorithms for Min-Max Optimization.}
Primal-dual method is a classical approach to solve saddle point problems \citep{nemirovski2004prox,he2017algorithmic,lan2020first,bredies2015preconditioned,luke2018globally}. Consider the min-max problem
\begin{align}\label{saddle-point-problem}
    \min_{x\in \cX}\max_{y\in \cY}\left\{f(x)+\langle Ax,y\rangle-g(y)\right\},
\end{align}
where $A\in\mathbb{R}^{m\times n}$ denotes a linear mapping, and $f:\cX\rightarrow \mathbb{R}$ and $g:\cY\rightarrow \mathbb{R}$ are convex functions. General optimization problems with linear constraints: $\min\limits_{x\in\cX}\{f(x):Ax=b \}$ \congfang{can be transformed to}
Eq.~\eqref{saddle-point-problem}. The proximal primal-dual algorithm updates $x_t$ and $y_t$ by
\begin{align}
    \Tilde{x}_t=&x_{t-1}+\mu_t(x_{t-1}-x_{t-2}),
    \\
    y_t=&\arg\min\limits_{y\in\cY}\langle-A\Tilde{x}_t,y\rangle+g(y)+\tau_t W(y_{t-1},y),
    \label{eq:minmax_y}\\
    x_t=&\arg\min\limits_{x\in\cX}\langle y_t,Ax\rangle+f(x)+\eta_t V(x_{t-1}, x),\label{eq:minmax_x}
\end{align}
under a finite dimension condition with $W$ and $V$ representing Bregman's distances on $\cY$ and $\cX$, respectively. Moreover, $\tau_t, \eta_t, \gamma_t$ are algorithmic parameters to be specified.  The
Bregman divergence is particularly important for other Riemannian structures.
In probability space, the Bregman divergence is usually chosen as KL divergence or Wasserstein distance.

\paragraph{Unadjusted Langevin Algorithm.} We introduce the 
\emph{Unadjusted Langevin Algorithm (ULA)}
to solve the mirror steps in our setting. In particular, some isoperimetry inequalities shall be introduced to make each step well-posed.

\begin{definition} \citep{gross1975logarithmic}
We say that density $p$ satisfies log-Sobolev inequality (LSI) with a constant $\alpha>0$, if
\begin{align}\label{LSI}
    \mathbb{E}_{p}[g^2\log(g^2)]-\mathbb{E}_{p}[g^2]\log\mathbb{E}_{p}[g^2]\leq\frac{2}{\alpha}\mathbb{E}_{p}[\|\nabla g\|^2]
\end{align}
for all differentiable function $g$ with 
 $\mathbb{E}_{p}[g^2]<\infty$. When $g = \sqrt{q/p}$, we have 
$$
\mathrm{KL}(q\Vert p) \leq \frac{1}{2\alpha} \EE_q \left\Vert \nabla \log \frac{q}{p}\right\Vert^2.
$$
\end{definition}

Assume we aim to sample from a smooth target probability density $p\propto e^{-f(x)}$ where $f(x)$ has $L_1$-Lipschitz gradient and satisfies a smoothness condition. The ULA with step size $\epsilon>0$ is the discrete-time algorithm
\begin{align}
    x_{s+1}=x_s-\epsilon\nabla f(x_s)+\sqrt{2\epsilon}\zeta_s,
\end{align}
where $s=0,\cdots,T$ for some positive integer $T$; $x_0$ is given as an initialization;  
$\zeta_s\sim\mathcal{N}(0,I)$ is an isotropic Gaussian noise. Moreover, if $p$ satisfies LSI, we can guarantee the convergence rate of ULA.
The $\text{KL}(\rho_s\|p)$ along ULA would also converge to $\cO(\epsilon)$ with $\cO(\epsilon)$ fixed step size. 
Since it is equivalent with gradient descent with additional Gaussian noise, ULA is easy to implement in practice. 
The easy-to-implement property and the arbitrarily small bias make it a popular in sampling literature. Thus, we choose it to solve our sub-problems.

\begin{algorithm}[h]
\caption{PAPAL: PArticle-based Primal-dual ALgorithm for Mixed Nash Equilibrium} \label{alg:algorithm}
{\small
{\bf Input:} 
 number of outer iterations $T$, outer-loop learning rates $\{\eta_t\}_{t=1}^T$ and $\{\tau_t\}_{t=1}^T$,  outer-loop hyper-parameters $\{\mu_t\}_{t=1}^T$; number of inner iterations $\{T_t\}_{t=1}^T$, inner-loop learning rates $\{\iota_t\}_{t=1}^T$, 
 number of particles $M$
 \\
{\bf Output:} estimated equilibrium   
$(\frac{1}{M}\dirac_{\hat{X}},\frac{1}{M}\dirac_{\hat{Y}})$
\begin{algorithmic}[1]
\STATE Initialize $\{\Tilde{x}_r^{(0)}\}_{r=0}^M\sim p_0(x)\propto\exp\left\{-\frac{\lambda_1}{\lambda_2}\|x\|^2\right\},\ \{\Tilde{y}_r^{(0)}\}_{r=0}^M\sim q_0(y)\propto\exp\left\{-\frac{\lambda_1}{\lambda_2}\|y\|^2\right\}.$ 

Assign $(\tilde{X}^{(-1)},\tilde{Y}^{(-1)})=(\tilde{X}^{(0)},\tilde{Y}^{(0)})\leftarrow\left(\{\Tilde{x}_r^{(0)}\}_{r=1}^M, \{\Tilde{y}_r^{(0)}\}_{r=1}^M\right)$.

Assign $h^{(0)}(y) = \frac{\lambda_1}{\lambda_2}\|y\|^2 $, $g^{(0)}(x) = \frac{\lambda_1}{\lambda_2}\|x\|^2 $.
\FOR{$t=0$ to $T-1$}
    \STATE Initialize $(X^{(0)},Y^{(0)})\sim p_0(x)\times q_0(y)$.
    \STATE Update $
    h^{(t+1)}(y) 
    =\frac{1} {\lambda_2+\tau_{t+1}}\left\{\hat{\phi}_{t+1}(y)
+\lambda_1\|y\|^{2}+\tau_{t+1}h^{(t)}(y)\right\}.
    $
    \FOR{$k=0$ to $T_t$, $r=0$ to $M$}
        \STATE Run noisy GD
        $y_r^{(k+1)}\leftarrow y_r^{(k)}-\iota_t\nabla_y h^{(t+1)}(y_r^{(k)})+\sqrt{2\iota_t}\xi_r^{(k)}$, where $\xi_r^{(k)}\sim\mathcal{N}(0, I_{n})$.
    \ENDFOR
    \STATE Assign $\Tilde{Y}^{(t+1)}\leftarrow Y^{(T_t+1)}$.
    \STATE Update $
    g^{(t+1)}(x)
    =\frac{1} {\lambda_2+\eta_{t+1}}\left\{\hat{\psi}_{t+1}(x)+\lambda_1\|x\|^{2}+\eta_{t+1} g^{(t)}(x)\right\}.
    $
    
    \FOR{$k=0$ to $T_t$, $r=0$ to $M$}
        \STATE Run noisy GD $x_r^{(k+1)}\leftarrow x_r^{(k)}-\iota_t\nabla_x g^{(t+1)}(x_r^{(k)})+\sqrt{2\iota_t}\zeta_r^{(k)}$, where $\zeta_r^{(k)}\sim\mathcal{N}(0, I_{m})$.
    \ENDFOR
    \STATE Assign $\Tilde{X}^{(t+1)}\leftarrow X^{(T_t+1)}$.
    
\ENDFOR 
\STATE\textbf{Option I:} $
(\frac{1}{M}\dirac_{\hat{X}},\frac{1}{M}\dirac_{\hat{Y}}) \leftarrow 
(\frac{1}{M}\dirac_{\Tilde{X}^{(T)}},\frac{1}{M}\dirac_{\Tilde{Y}^{(T)}})$.
\STATE \textbf{Option II:} Randomly pick up $(\frac{1}{M}\dirac_{\tilde{X}^{(t)}},\frac{1}{M}\dirac_{\tilde{Y}^{(t)}})$ from $t\in\{1,\cdots,T\}$ following the probability $\mathbb{P}[t]=\frac{\mu_t}{\sum_{t=1}^T\mu_t}$ as $(\frac{1}{M}\dirac_{\hat{X}},\frac{1}{M}\dirac_{\hat{Y}})$.
\end{algorithmic}
}
\end{algorithm}

\section{PAPAL: PArticle-based Primal-dual ALgorithm}
We propose PArticle-based Primal-dual ALgorithm (PAPAL) to solve the regularized minimax problem Eq.~\eqref{eq:problem_particle} by applying Mirror-Prox Primal-dual method to MNE.

As an optimization algorithm on the space of  continuous probability measures, our proposed PAPAL (Algorithm \ref{alg:algorithm}) consists of two components: inner loop and outer loop. Specifically,
our outer loop extends the classical primal-dual scheme (Sec. \ref{sec:preli}) to infinite dimensional optimization problems (Sec. \ref{sec:pd}). Each outer step can be rewritten as a sampling task for Gibbs distribution (Sec. \ref{sec:energy}). We perform ULA as the inner loop to approximate a proximal Gibbs distribution, which is applied to the outer loop optimization to converge to the Mixed Nash Equilibrium distributions $p^*$ and $q^*$ (Sec. \ref{sec:solver}). We will provide the motivation of our algorithm from an optimization view (Sec. \ref{sec:opt_view}).

\subsection{Exact Primal-dual Method}\label{sec:pd}
We construct the target density of $p^*_t$ and $q^*_t$ for each iteration. Particularly, given $\hat{p}_{-1}$, $\hat{p}_0$, and $\hat{q}_0$,
$q_t^*(y)$ and $p_t^*(x)$ satisfy the following recursive condition,
    \begin{align}              
q_t^*:=&\argmin_{q\in\cP_2(\cY)}\mathbb{E}_q[\hat{\phi}_t(y)]+\cR(q) +
        \tau_t \mathrm{KL}(q\|q_{t-1}^*), \label{minimizer-q}
        \\ p_t^*:=&\argmin_{p\in\cP_2(\cX)}\mathbb{E}_p[\hat{\psi}_t(x)]+\cR(p)
        +\eta_t \mathrm{KL}(p\|p_{t-1}^*), \label{minimizer-p}
    \end{align}
    where $\hat{\phi}_t(y)=-(1+\mu_t)\mathbb{E}_{\hat{p}_{t-1}}[f(\cdot,y)]+\mu_t\mathbb{E}_{\hat{p}_{t-2}}[f(\cdot,y)]$, and $\hat{\psi}_t(x)=\mathbb{E}_{\hat{q}_t}[f(x,\cdot)]$. 

    If $\hat{q}_t = q_t^*$, $\hat{p}_t = p_t^*$, Eq.~\eqref{minimizer-q} and \eqref{minimizer-p} perform exact primal-dual method in continuous probability space, with KL divergence as the Bregman divergence. It is quite direct to compare them to Eq.~\eqref{eq:minmax_y} and \eqref{eq:minmax_x}.
    However,
    there two gaps for the aforementioned updates: 
    (1) the forms of $q_{t-1}^*$ and $p_{t-1}^*$ are not explicit in Eq.~\eqref{minimizer-q} and Eq.~\eqref{minimizer-p};
    (2) the exact solutions for $\hat{q}_t\equiv q_t^*$ and $\hat{p}_t\equiv p_t^*$ are not tractable.
    Thus, it is necessary to derive a explicit form of $(q_{t-1}^*,p_{t-1}^*)$ and approximate the optimal $(q_t^*,p_t^*)$ with finite $M$ particles in $(\hat{q}_t, \hat{p}_t)$.



\subsection{Recursive Energy Function}\label{sec:energy}

Section \ref{sec:pd} provides us an algorithm to find mixed Nash equilibrium in the probability space.
In this part, we would discuss the feasibility of the proposed updates.

The following Lemma provides us insight to obtain the explicit form of $q_*^{t-1}$ and $p_*^{t-1}$ in  Eq.~\eqref{minimizer-q} and Eq.~\eqref{minimizer-p} recursively, which can be implemented in an algorithm.

\begin{lemma}\label{lemma:optimal-KL-prox}
    Let $\lambda>0$ be a positive real number and $\tilde{l}(\theta), l(\theta)$ be bounded continuous functions. Consider a probability density $\Bar{p}(\theta)\propto\exp\left(-\tilde{l}(\theta)\right)$, then $p(\theta)\propto\exp\left(-\frac{1}{\lambda+\tau}l(\theta)-\frac{\tau}{\lambda+\tau}\tilde{l}(\theta)\right)$ is an optimal solution to the following problem  
    \begin{align}
        \min\limits_{p\in\mathcal{P}_2(\mathbb{R}^{d_{\theta}})}\{\mathbb{E}_{p}[l(\theta)]+\lambda \cE(p)+\tau \mathrm{KL}(p\|\Bar{p})\}.
    \end{align}
\end{lemma}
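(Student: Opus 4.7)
The plan is to rewrite the objective, after grouping like terms, as a single KL divergence (up to an additive constant) between $p$ and the claimed minimizer, so that nonnegativity of KL yields the result immediately. This avoids any calculus of variations and handles the constraint $\int p = 1$ automatically through the KL.

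First I would expand the three terms of the objective. Writing $\bar{p}(\theta) = Z^{-1} e^{-\tilde{l}(\theta)}$ with $Z = \int e^{-\tilde{l}}$, the $\mathrm{KL}(p \| \bar{p})$ term becomes $\int p \log p + \int p \cdot \tilde{l} + \log Z$. Combining with the entropy $\lambda\mathcal{E}(p) = \lambda \int p \log p$ and the linear term $\int p \cdot l$, the whole objective reads
\begin{equation*}
J(p) = \int p(\theta)\bigl[l(\theta) + \tau\tilde{l}(\theta)\bigr]\,d\theta + (\lambda + \tau)\int p(\theta)\log p(\theta)\,d\theta + \tau\log Z.
\end{equation*}

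Next I would factor out $\lambda + \tau > 0$. Setting $V(\theta) := \frac{l(\theta) + \tau\tilde{l}(\theta)}{\lambda + \tau}$ and $Z' := \int e^{-V(\theta)}\,d\theta$ (finite because $l, \tilde{l}$ are bounded), so that the candidate density is $p^\star(\theta) = (Z')^{-1} e^{-V(\theta)}$, a direct manipulation gives
\begin{equation*}
\frac{J(p) - \tau\log Z}{\lambda + \tau} = \int p(\theta)\bigl[V(\theta) + \log p(\theta)\bigr]d\theta = \mathrm{KL}(p \| p^\star) - \log Z'.
\end{equation*}
Since $\log Z$ and $\log Z'$ do not depend on $p$, minimizing $J$ over $\mathcal{P}_2(\mathbb{R}^{d_\theta})$ is equivalent to minimizing $\mathrm{KL}(p \| p^\star)$. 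By Gibbs's inequality, this KL is nonnegative and equals zero iff $p = p^\star$ almost everywhere, which is exactly the claimed form $p(\theta) \propto \exp\bigl(-\tfrac{1}{\lambda+\tau}l(\theta) - \tfrac{\tau}{\lambda+\tau}\tilde{l}(\theta)\bigr)$.

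The only subtlety worth spelling out is admissibility: I would briefly check that $p^\star$ does belong to $\mathcal{P}_2$, but since $\bar p \in \mathcal{P}_2$ presumably (used implicitly in the outer algorithm) and $l$ is bounded, the exponent of $p^\star$ differs from that of $\bar p$ by a bounded function, so finiteness of the second moment and of $\mathrm{KL}(p^\star \| \bar p)$ transfers automatically. The proof has no real obstacle; the only thing to get right is the bookkeeping of the multiplicative constants $\frac{1}{\lambda+\tau}$ and $\frac{\tau}{\lambda+\tau}$ when folding the three pieces into a single KL.
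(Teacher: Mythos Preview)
Your proposal is correct and takes essentially the same approach as the paper: both arguments reduce the objective to $(\lambda+\tau)\,\mathrm{KL}(p\|p^\star)$ plus a constant independent of $p$, and then invoke nonnegativity of KL. The only cosmetic difference is that the paper phrases this as computing $F(p')-F(p)=(\lambda+\tau)\mathrm{KL}(p'\|p)$, whereas you rewrite $J(p)$ directly; the algebra is identical.
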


Once we obtained two approximate distributions
$\hat{p}_i(y)$ and $\hat{p}_i(x)$, Lemma \ref{lemma:optimal-KL-prox} indicates that if $p_{t-1}^*$ and $q_{t-1}^*$ are proximal Gibbs distribution
with explicit form. 
Then, $p_t^*$ and $q_t^*$ have the similar expression as well. Therefore, we denote 
    \begin{align}
    h^{(t)}(y) 
    =\frac{1} {\lambda_2+\tau_t}\left\{\hat{\phi}_t(y)
+\lambda_1\|y\|^{2}+\tau_th^{(t-1)}(y)\right\},\label{h_def}
    \\
    g^{(t)}(x)
    =\frac{1} {\lambda_2+\eta_t}\left\{\hat{\psi}_t(x)+\lambda_1\|x\|^{2}+\eta_t g^{(t-1)}(x)\right\},\label{g_def}
    \end{align}
    which consist of the Gibbs potential on $\mathcal{P}_2(\mathcal{Y})$ and $\mathcal{P}_2(\mathcal{X})$ after $t-1$ iterations. Thus, at each step-$t$, our objective probability densities can be written as 
    \begin{align}
        q_t^*(y)=\frac{\exp\left(-h^{(t)}(y)\right)}{Z_{t,1}},\, \, \, \, p_t^*(x)=\frac{\exp\left(-g^{(t)}(x)\right)}{Z_{t,2}},\notag
    \end{align}
    where $Z_{t,1}$ and $Z_{t,2}$ are normalizing constants. 

\subsection{Inexact Solver and Particle Approximation}\label{sec:solver}

At every step \( t \), achieving the ideal scenario where \( \hat{q}_t = q_t^* \) and \( \hat{p}_t = p_t^* \) is not tractable within the algorithm. This limitation arises because obtaining exact solutions for Equations \eqref{minimizer-q} and \eqref{minimizer-p} is not feasible for a general function \( f \).
Fortunately, the optimization involving random particles can be effectively approximated using the log density. ULA presents a viable approach for approximating the desired target distribution. By performing ULA,
we can obtains random particle samples $(\{x_r^{T_t}\}_{r=1}^M,\{y_r^{T_t}\}_{r=1}^M)$ that can approximate  $(q^*_t,p^*_t)$. We let that $(p_t,q_t) = (\mathrm{Law}(\{x_r^{T_t}\}_{r=1}^M),\mathrm{Law}(\{y_r^{T_t}\}_{r=1}^M))$ and $(\hat{q}_t, \hat{p}_t)$ is finite sample approximation of $(p_t,q_t)$ (delta distribution by finite samples).


We first initialize a set of particles and duplicate it by setting $\{\tilde{x}_r^{(-1)},\tilde{y}_r^{(-1)}\}_{r=1}^M =\{\tilde{x}_r^{(0)},\tilde{y}_r^{(0)}\}_{r=1}^M
$. 
For each step, we run ULA to optimize the particles in $\cY$, where the gradient at the $k$-th inner step is given by $\nabla_y h^{(t)}(y_r^{(k)})$ estimated by $\{\tilde{x}_r^{(t-1)}\}_{r=1}^M$ and $\{\tilde{x}_r^{(t-2)}\}_{r=1}^M$. Then, we execute the same process as above starting from the prior where the gradient at the $k$-th inner step is given by $\nabla_y g^{(t)}(x_r^{(k)})$ estimated by $\{\tilde{y}_r^{(t)}\}_{r=1}^M$. 
Thus, we obtain the full procedure of PAPAL in Algorithm \ref{alg:algorithm}. We also provide empirical results in Appendix \ref{sec:empi}.


\begin{remark} To make a clear
distinction between  $(\hat{q}_t,\hat{p}_t)$, $(q_t,p_t)$, and $(q^*_t,p^*_t)$, we reiterate the definition here: we initialize $(\hat{q}_{-1},\hat{p}_{-1})=(\hat{q}_0,\hat{p}_0)$ from the prior distribution. From $t=1$ to $t=T$, $(q^*_t,p^*_t)$ is the exact solution from Eq.~\eqref{minimizer-q} and Eq.~\eqref{minimizer-p}; $(q_t,p_t)$ is the ULA approximation of $(q^*_t,p^*_t)$;  $(\hat{q}_t,\hat{p}_t)$ is the realization of $(q_t,p_t)$ with $M$ samples (mixture of dirac delta distributions). All the densities can be defined by recursion.
\end{remark}
\begin{remark}
 Other sampling algorithms can be implemented as the inexact solver, such as Hamiltonian Monte Carlo \citep{duane1987hybrid,neal2011mcmc}, Metropolis-adjusted Langevin algorithm  \citep{rossky1978brownian,grenander1994representations}.
We use ULA here for demonstration and simplicity. More sophisticated solvers may accelerate the convergence.
\end{remark}

\subsection{Optimization View of PAPAL}\label{sec:opt_view}

In this subsection we explain the intuition behind PAPAL from an optimization perspective. Recall that Algorithm \ref{alg:algorithm} iteratively updates the particles $\hat{q}_t$ and $\hat{p}_t$ which the approximate the minimizers of the linearized potentials Eq.~\eqref{minimizer-q} and \eqref{minimizer-p}. 
Actually, we obtain the classical Primal-dual algorithm  \citep{nemirovski2004prox} if Eq.~\eqref{minimizer-q} and Eq.~\eqref{minimizer-p} degenerate to finite dimension. It is obvious that we can find $q_t^*$ and $p_t^*$ with negligible error under finite dimension condition which implies the total complexity would be derived easily. 
However, there are three problems if we rely on Langevin algorithm when solving the sub-problem for infinite dimension: 
\begin{enumerate}
    \item Express the objective densities of the sub-problem as the form of $e^{-h^{(t)}(y)}/Z_{t,1}$ and $e^{-g^{(t)}(x)}/Z_{t,2}$, which can be explicitly obtained in an algorithm;
    \item Analysis the outer loop
    with the bounded KL error induced by ULA at each step-$t$, which converges linearly with fixed step size under LSI;
    \item Control the error induced by finite particles at each step-$t$, which should be done by concentration.
\end{enumerate}
The first problem can be solved by forward recursion of $(q_{t-1}^*,p_{t-1}^*)$ and Lemma \ref{lemma:optimal-KL-prox} combining the approximated solutions $\left\{(\hat{q}_k, \hat{p}_k)\right\}_{k=-1}^{t-1}$ we have known.
Next, in order to bound the error between approximate densities and precise densities in each iteration of Primal-dual method, we introduce the LSI condition \citep{gross1975logarithmic}, which can guarantee the fast convergence of sampling algorithms, such as ULA.
Therefore, we add $\|\cdot\|^{2}$ term with coefficient $\lambda_1$, ($\lambda_1$ can be an arbitrary positive number) to the regularizer which guarantee that $q_t^*$ and $p_t^*$ satisfy the LSI condition. 
Finally, the finite particle $\hat{\phi}_t$ and $\hat{\psi}_t$ approximate the true expectation 
${\phi}_t(y)=-(1+\mu_t)\mathbb{E}_{{p}_{t-1}}[f(\cdot,y)]+\mu_t\mathbb{E}_{{p}_{t-2}}[f(\cdot,y)]$, and
$ {\psi}_t(x)=\mathbb{E}_{{q}_t}[f(x,\cdot)]$. The gap between $(\hat{\phi}_t,\hat{\psi}_t)$ and $({\phi}_t,{\psi}_t)$ shall be controlled by concentration techniques.

To sum up, we connect bilinear forms $\langle Ax,y\rangle$ under finite dimension and $\EE_{p(x)}\EE_{q(y)}f(x,y)$ under infinite dimension with proximal Gibbs distribution. In addition, we apply the estimation of sub-problem error and finite sample error 
to global convergence analysis for our particle-based algorithm. Therefore, we may interpret the PAPAL as a finite-particle approximation of a primal-dual method. PAPAL combines the efficient convergence of Primal-dual method in outer loop and the KL error bound of ULA under LSI condition in inner loop. More importantly, the loss functional can be estimated with polynomial finite particles, which makes our algorithm implementable.

\section{Existence of MNE}
Our objectives are two-fold: 
1) To achieve an \( \epsilon \)-approximate Mixed Nash Equilibrium (MNE) using the PAPAL algorithm.
2) To minimize the distance between the continuous probability density function generated by the algorithm and the MNE of the problem defined in Eq.~\eqref{eq:problem_particle}, with the regularizer outlined in Eq.~\eqref{eq:relative_reg}, aiming to reduce this gap as close to zero as possible. A fundamental step towards these goals is establishing the existence of the MNE for the problem in Eq.~\eqref{eq:problem_particle}. To this end, the subsequent theorem asserts the existence of an MNE for this problem, considering the regularization term in Eq.~\eqref{eq:relative_reg}. \rev{It is worth noting that the existence result in this paper is established in $\mathbb{R}^m\times\mathbb{R}^n$
, while the existence result proposed by \citet{domingo2020mean} is based on compact Riemann manifolds without
boundary embeded in $\mathbb{R}^m\times\mathbb{R}^n$.}
\begin{theorem}\label{existence-nash}
    Considering problem Eq.~\eqref{eq:problem_particle} with regularizer Eq.~\eqref{eq:relative_reg} and assuming Assumptions \ref{ass:a1}, \ref{ass:a2} and \ref{ass:a4} hold, there exists a MNE $(p_*,q_*)$ satisfies that 
    \begin{equation}\label{explicit-expression}
        \begin{split}
            p_*=&\arg\min\limits_{p\in\mathcal{P}_2(\cX)}\mathbb{E}_{p(x)}\mathbb{E}_{q_*(y)}[f(x,y)]+\mathcal{R}(p)-\mathcal{R}(q_*),
	\\	        
        q_*=&\arg\min\limits_{q\in\mathcal{P}_2(\cY)}\mathbb{E}_{p_*(x)}\mathbb{E}_{q(y)}[f(x,y)]+\mathcal{R}(p_*)-\mathcal{R}(q).
        \end{split}
    \end{equation}
\end{theorem}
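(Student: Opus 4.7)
The plan is to realize the MNE as a fixed point of the best-response Gibbs map. First I would note that with the regularizer from Eq.~\eqref{eq:relative_reg}, $\cL(p,q)$ is strictly convex in $p$ (the negative entropy $\lambda_2\EE_p\log p$ is strictly convex on densities, and the remaining terms $\EE_p\EE_q f$ and $\lambda_1\EE_p\|x\|^2$ are linear in $p$) and strictly concave in $q$ by symmetry. By the Gibbs variational principle, for fixed $q \in \cP_2(\cY)$ the unique minimizer of $p \mapsto \cL(p,q)$ is
\begin{equation*}
p_q^*(x) = \frac{1}{Z_X(q)}\exp\left(-\frac{1}{\lambda_2}\bigl[\EE_{q(y)} f(x,y) + \lambda_1\|x\|^2\bigr]\right),
\end{equation*}
with a symmetric formula for the unique maximizer $q_p^*$. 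A fixed point of the joint best-response map $T(p,q) := (p_q^*,q_p^*)$ is exactly a saddle point of $\cL$ and satisfies Eq.~\eqref{explicit-expression}, so it suffices to produce one.

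Next I would set up a convex, compact domain on which to apply a fixed-point theorem. Using $|f|\leq 1$ (Assumption \ref{ass:a4}), every image of $T$ admits a two-sided pointwise Gaussian envelope: with $\pi_X(x) \propto \exp(-\lambda_1\|x\|^2/\lambda_2)$,
\begin{equation*}
e^{-2/\lambda_2}\pi_X(x) \leq p_q^*(x) \leq e^{2/\lambda_2}\pi_X(x), \qquad \text{for all }x \in \cX,
\end{equation*}
with an analogous bound on $\cY$. I would take
\begin{equation*}
K_X := \bigl\{p \in \cP_2(\cX) : e^{-2/\lambda_2}\pi_X \leq p \leq e^{2/\lambda_2}\pi_X \text{ a.e.}\bigr\}
\end{equation*}
and the analogous $K_Y$. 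The product $K_X \times K_Y$ is convex, tight (Gaussian envelope), and the densities are uniformly dominated by $e^{2/\lambda_2}\pi_X \in L^1$; hence $K_X\times K_Y$ is compact in the total variation topology, and $T$ maps it into itself.

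Then I would verify continuity of $T$ in TV. If $q_n \to q$ in TV, then Assumption \ref{ass:a1} together with the Lipschitz-dual characterization of TV (against bounded functions) yields $\EE_{q_n} f(x,\cdot) \to \EE_q f(x,\cdot)$ uniformly in $x$, so the Gibbs potentials and their normalizing constants $Z_X(q_n) \to Z_X(q)$ converge, producing $p^*_{q_n} \to p^*_q$ in $L^1(dx)$; the argument for $G_Y$ is symmetric. Schauder's fixed-point theorem then yields $(p_*,q_*) \in K_X\times K_Y$ with $T(p_*,q_*)=(p_*,q_*)$, which is exactly Eq.~\eqref{explicit-expression}, and the saddle inequality $\cL(p_*,q)\leq \cL(p_*,q_*)\leq \cL(p,q_*)$ follows immediately because $p_*$ and $q_*$ are the global extrema of the corresponding one-sided convex/concave problems.

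The main obstacle will be the topological setup: since $\cX = \RR^m$ and $\cY = \RR^n$ are non-compact, standard weak compactness is unavailable, and I must lean essentially on the quadratic part $\lambda_1\|x\|^2$ of the regularizer to produce both the tightness and the pointwise Gaussian envelope that make $K_X \times K_Y$ compact in TV. Everything else, including continuity of the best-response map and the passage from fixed point to MNE, then follows from the Lipschitz smoothness of $f$ and the strict convex-concave structure imposed by the entropic regularization.
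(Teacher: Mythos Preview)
Your overall strategy---realize the MNE as a fixed point of the best-response Gibbs map $T(p,q)=(p_q^*,q_p^*)$ and apply Schauder---is exactly the paper's strategy, and your continuity argument for $T$ is fine. The gap is in the compactness step.

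The set
\[
K_X = \bigl\{p \in \cP_2(\cX) : e^{-2/\lambda_2}\pi_X \leq p \leq e^{2/\lambda_2}\pi_X \text{ a.e.}\bigr\}
\]
is \emph{not} compact in total variation. Tightness plus uniform domination by an $L^1$ function gives you only weak $L^1$-compactness (Dunford--Pettis), not norm compactness. Concretely, densities of the form $p_n(x)\propto \pi_X(x)\bigl(1+\tfrac12\sin(nx_1)\bigr)$ lie in a set of this type, converge weakly to $\pi_X$, but $\|p_n-\pi_X\|_{L^1}$ stays bounded away from zero. So Schauder does not apply to $K_X\times K_Y$ in the TV topology as you have set it up.

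The paper closes this gap by building equicontinuity into the domain: it works on the family
\[
\mathcal{P}(g,\mathcal{L},\RR^d)=\Bigl\{p\le g,\ \|\nabla p\|\le L_k \text{ on } B_k(\mathbf{0}),\ \textstyle\int p=1\Bigr\},
\]
and proves compactness via an iterated Arzel\`a--Ascoli argument on expanding balls plus dominated convergence. The key observation you are missing is that the Lipschitz hypothesis on $f$ (Assumption~\ref{ass:a1}) does double duty: it not only gives continuity of $T$, it also gives a uniform bound on $\|\nabla_x p_q^*\|$ over all $q$, so that the image of $T$ lands in an equicontinuous family. Once you add this Lipschitz constraint to the definition of $K_X$, compactness goes through and the rest of your argument is essentially the paper's proof.
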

The proof of Theorem \ref{existence-nash} can be formulated into a two-step process. Step 1: constructing a suitable probability density function family and leveraging well-established lemmas in functional analysis to verify that this function family is a compact convex set. Step 2: proving that the operator $\boldsymbol{F}(p,q)$ (as precisely defined in the Appendix \ref{existence-MNE}) is a continuous mapping from the aforementioned function family to itself. Finally, we can achieve the conclusion of Theorem \ref{existence-nash} combining Brouwer fixed-point theorem.

\section{Convergence Analysis}
In this section,  we provide quantitative global convergence for PAPAL. 
We first provide the finite sample error bound in Sec.~\ref{sec:finite}. Then, under deterministic version, we introduce the outer loop settings in Sec.~\ref{subsection-5.1} and conclude the properties of the updates. By assuming the approximate optimality of the inner loop iterations, the error bound of the outer loop can be obtained. In Sec.~\ref{subsection-5.2}, we discuss the convergence rate for the inner loop iterations, which is the KL convergence of ULA algorithm. In Sec.~\ref{subsection-5.3}, we combine the outer loop and the inner loop to discuss the total complexity of deterministic PAPAL and provide the global convergence results of our main theorem. As a supplementary remark, Sec \ref{Sto-PAPAL} presents a global convergence analysis of the stochastic PAPAL algorithm.

\subsection{Finite Sample Error}\label{sec:finite}
We introduce two probability sets $\cQ_{\cX}(C)$ and $\cQ_{\cY}(C)$ for positive constant $C$ as follows
 \begin{align*}
     &\cQ_{\cX}(C):=\left\{p(x)\in\cP_2(\cX)\left|p(x)\propto e^{-l_1(x)-\frac{\lambda_1}{\lambda_2}\|x\|^2}; \|l_1\|_\infty\leq \frac{C}{\lambda_2}\right.\right\},\\
     &\cQ_{\cY}(C):=\left\{q(y)\in\cP_2(\cY)\left|q(y)\propto e^{-l_2(y)-\frac{\lambda_1}{\lambda_2}\|y\|^2};\|l_2\|_\infty\leq \frac{C}{\lambda_2}\right.\right\}.
 \end{align*}
 To establish connections between $\phi_t$ and $\hat{\phi}_t$, as well as $\psi_t$ and $\hat{\psi}_t$, in the convergence analysis, it is sufficient to bound the discretization error on linear term by using the form of objective function in Eq.~\eqref{eq:problem_particle}. Therefore, we use the following lemma to estimate the discretization error between $\hat{p}_t$ and $p_t$ ($\hat{q}_t$ and $q_t$) at each step-$t$ (refer to Appendix \ref{discre}).
 \begin{lemma}\label{finite-particle-approximation}
For any $\epsilon>0, \delta\in(0,1)$,  the errors of finite particles satisfy
     \begin{align}
         &\mathbb{P}\left[\left|(\hat{p}_t-p_t)(x)f(x,y)q(y)dydx\right|\geq\epsilon\right]\leq\delta, 
         &\mathbb{P}\left[\left|(\hat{q}_t-q_t)(y)f(x,y)p(x)dxdy\right|\geq\epsilon\right]\leq\delta, \label{concentration-1}
     \end{align}
     with the required numbers of particles $$M:=\mathcal{O}\left(\epsilon^{-2}\left[-\log(\delta)+d\log\left(1+L_0r_{\max\{C_1,C_2\}}(\epsilon,\lambda_1,\lambda_2)\epsilon^{-1}\right)\right]\right)$$ 
     on $\hat{p}_t(x)$ and $\hat{q}_t(y)$ for all $p(x)\in\cQ_{\cX}(C_1),\, q(y)\in\cQ_{\cY}(C_2)$ where $d=\max\{m,n\}$  and $r_C(\epsilon,\lambda_1,\lambda_2):=8\sqrt{\frac{\lambda_2+C}{\lambda_1}\max\{\log(\epsilon^{-1}), d\}}$.
 \end{lemma}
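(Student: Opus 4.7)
My plan is to handle the first bound (the second follows by symmetry) by reducing it to a uniform concentration problem over a compact set and then applying a standard Hoeffding--plus-covering argument. Conditional on the randomness producing $p_t$, the particles $\{x_i\}_{i=1}^M$ defining $\hat p_t$ are i.i.d.\ draws from $p_t$. Writing
\[
\int (\hat p_t - p_t)(x)\, f(x,y)\, q(y)\,dx\,dy \;=\; \int q(y)\, g(y)\,dy, \quad g(y):=\frac{1}{M}\sum_{i=1}^M f(x_i,y) - \EE_{p_t}[f(\cdot,y)],
\]
one obtains a function $g$ that is $L_0$-Lipschitz in $y$ (inherited from $f$) and uniformly bounded by $2$. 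It thus suffices to control $\bigl|\int q(y)g(y)\,dy\bigr|$ uniformly over $q\in\cQ_{\cY}(C_2)$.

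Next, I would perform a tail-truncation step. For any $q\in\cQ_{\cY}(C_2)$, the Gaussian factor $e^{-(\lambda_1/\lambda_2)\|y\|^2}$ dominates, and the bounded perturbation $\|l_2\|_\infty\leq C_2/\lambda_2$ gives $q(y)\leq e^{2C_2/\lambda_2}\varphi_{\sigma^2 I_n}(y)$ with $\sigma^2=\lambda_2/(2\lambda_1)$. A standard Gaussian concentration bound then yields $q(B_r^c)\leq \epsilon/4$, where $r=r_{C_2}(\epsilon,\lambda_1,\lambda_2)$ is chosen to exceed both the typical scale $\sigma\sqrt d$ and to satisfy $r^2/\sigma^2 \gtrsim C_2/\lambda_2+\log(1/\epsilon)$. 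Splitting the integral produces
\[
\left|\int q(y)g(y)\,dy\right| \;\leq\; \sup_{y\in B_r}|g(y)| + 2q(B_r^c) \;\leq\; \sup_{y\in B_r}|g(y)| + \epsilon/2.
\]

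For the supremum on $B_r$, I would construct an $\eta$-net $\{z_1,\dots,z_N\}\subset B_r$ with $N\leq(1+2r/\eta)^d$. Hoeffding's inequality applied to the bounded variables $f(x_i,z_k)\in[-1,1]$ gives $\PP[|g(z_k)|\geq\epsilon/4]\leq 2\exp(-M\epsilon^2/8)$ for each $k$; a union bound controls $\max_k|g(z_k)|$ with failure probability at most $2N\exp(-M\epsilon^2/8)$. The Lipschitz bound then extends the estimate from the net to all of $B_r$ through $|g(y)-g(z_{k(y)})|\leq L_0\eta$. Setting $\eta=\epsilon/(4L_0)$ and solving $2N\exp(-M\epsilon^2/8)\leq \delta$ yields the sample size $M=\cO(\epsilon^{-2}[-\log\delta+d\log(1+L_0 r/\epsilon)])$ as claimed.

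The main technical obstacle is calibrating the truncation radius $r$: it must simultaneously dominate the Gaussian scale $\sigma\sqrt d$ (for the tail of $q$) and absorb the exponential prefactor $e^{2C/\lambda_2}$ arising from the $\|l_2\|_\infty$ bound. The choice $r_C=8\sqrt{(\lambda_2+C)/\lambda_1}\sqrt{\max(\log\epsilon^{-1},d)}$ is designed to do both, with the specific constant $8$ providing slack for the Gaussian tail estimate. The analogous bound on $(\hat q_t - q_t)$ follows by the symmetric argument with $\cX,\cY$ exchanged; requiring both simultaneously is why the combined dimension $d=\max(m,n)$ and the worst-case constant $\max\{C_1,C_2\}$ appear in the final complexity.
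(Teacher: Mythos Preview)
Your proposal is correct and follows essentially the same approach as the paper: a tail-truncation step using the Gaussian factor in $\cQ_{\cY}(C_2)$ to restrict attention to a ball of radius $r_{C_2}(\epsilon,\lambda_1,\lambda_2)$, followed by a covering-plus-Hoeffding argument on that ball with Lipschitz extension from the net. The only cosmetic differences are in the bookkeeping constants (the paper takes $\Delta=\epsilon/(8L_0)$, $\rho=\epsilon/8$ and uses the chi-squared tail bound from Lemma~\ref{Tail-bound} explicitly), but the structure and the resulting sample complexity match exactly.
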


 We introduce the max-min gap function 
\begin{align}\label{max_min_gap}
    Q(\Bar{w},w):=\mathcal{L}(\Bar{p},q)-\mathcal{L}(p,\Bar{q}),
\end{align}
where $\Bar{w}:=(\Bar{p},\Bar{q})$ and $w:=(p,q)$. Note that the maximum of Eq.~\eqref{max_min_gap} equals to the duality gap on $(\Bar{p},\Bar{q})$. Considering the objective function in Eq.~\eqref{eq:problem_particle} with regularizer \eqref{eq:relative_reg}, we have
\begin{equation}\label{max-min-gap-control-entropy}
\begin{split}\max_{w\in\cP_2(\cX)\times\cP_2(\cY)}Q(\Bar{w},w)\geq&\lambda_2(\mathrm{KL}(\Bar{p}\|\Bar{p}^*)+\mathrm{KL}(\Bar{q}\|\Bar{q}^*)),\\
    \text{s.t.}\ \Bar{w}^*=(\Bar{p}^*,\Bar{q}^*)=&\argmax_{w'\in\cP_2(\cX)\times\cP_2(\cY)}Q(\Bar{w},w').
\end{split}
\end{equation}
As a result, the max-min gap on $\Bar{w}$ can regulate the KL divergence between $\Bar{w}$ and $\Bar{w}^*$.

To simplify the notation of the formula and capture the error of our algorithm, we use concise symbols to represent some complex intermediate variables: $r_C(\epsilon,\lambda_1,\lambda_2):=8\sqrt{\frac{\lambda_2+C}{\lambda_1}\max\{\log(\epsilon^{-1}), m, n\}}$ and $g(\lambda_2):=\log\left(1+\lambda_2+\lambda_2^2\right)$, and assume that the KL divergence between $(q_t,p_t)$ and $(q^*_t,p^*_t)$ can be bounded by small errors in outer loop, i.e.,
$$\mathrm{KL}(p_t\|p_t^*)\leq\delta_{t,1},\quad\mathrm{KL}(q_t\|q_t^*)\leq\delta_{t,2},$$
which is guaranteed by ULA algorithm with small step sizes (refer to section \ref{subsection-5.2}). We use the 
error bound quantities $\delta_{t,1}$
and $\delta_{t,2}$ to analyze the outer loop properties.

\subsection{Convergence of PAPAL}
\subsubsection{Outer Loop Error Bound}\label{subsection-5.1}
 The following theorem proves the convergence properties of our PAPAL.
\begin{theorem}\label{main-text-outer-loop-thm}
By setting parameters $\gamma_t\mu_t=\gamma_{t-1}$,\,  $\gamma_t\tau_t\leq\gamma_{t-1}(\tau_{t-1}+\lambda_2)$,\, $\gamma_t\eta_t\leq\gamma_{t-1}(\eta_{t-1}+\lambda_2)$,\, $\tau_t\eta_{t-1}\geq\mu_t,\, \mu_t\leq 1$ for any $t\geq1$ and optimizing the sub-problem with errors bounded by $\delta_{t,1}, \delta_{t,2}$ mentioned above, iterates of our method satisfies
{
\begin{align*}
    \sum_{t=1}^T\gamma_t Q(w_t,w)\leq&\gamma_1\eta_1\mathrm{KL}(p\|p_0^*)-\gamma_T(\eta_T+\lambda_2)\mathrm{KL}(p\|p_T^*)
    \\
    &+\gamma_1\tau_1\mathrm{KL}(q\|q_0^*)-\gamma_T\left(\tau_T+\lambda_2-\frac{1}{4\eta_T}\right)\mathrm{KL}(q\|q_T^*)
        \\
        &+4\sum_{t=1}^k\gamma_t (1+\mu_t)\epsilon
        \\
        &+\sum_{t=1}^k\gamma_t\left[\left(\lambda_2+\frac{2\lambda_1}{\alpha_{p_t^*}}\right)\delta_{t,1}+\left(30+\frac{8\lambda_1\sigma(p_t^*)}{\sqrt{\alpha_{p_t^*}}}\right)\sqrt{\delta_{t,1}}\right]\notag
        \\
&+\sum_{t=1}^k\gamma_t\left[\left(\lambda_2+\frac{2\lambda_1}{\alpha_{q_t^*}}\right)\delta_{t,2}+\left(34+\frac{8\lambda_1\sigma(q_t^*)}{\sqrt{\alpha_{q_t^*}}}\right)\sqrt{\delta_{t,2}}\right],
\end{align*}
}
for any $p\in\cQ_{\cX}(1), q\in\cQ_{\cY}(3)$ with probability at least $1-{\delta}$ for sample size $M:=\mathcal{O}\left(\epsilon^{-2}\left[\log(T\delta^{-1})+d\log\left(1+L_0r_{3}(\epsilon,\lambda_1,\lambda_2)\epsilon^{-1}\right)\right]\right)$
in each iteration where $d:=\max\{m,n\}$ , $\alpha_p$ is the log-Sobolev coefficient of density $p$ and $\sigma(p)$ denotes $(\mathbb{E}_p[\|\theta\|^2])^{1/2}$.
\end{theorem}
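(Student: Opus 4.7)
The plan is to execute a Chambolle--Pock style primal--dual analysis inside the space of probability measures, using $\mathrm{KL}$ as the Bregman divergence, and then absorb in turn the finite-particle concentration error (Lemma~\ref{finite-particle-approximation}) and the inner-loop sampling error $(\delta_{t,1},\delta_{t,2})$ coming from inexact ULA.

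First I would establish an exact three-point identity for each sub-problem. Because $\mathcal{R}(q)=\lambda_1\mathbb{E}_q\|y\|^2+\lambda_2\mathcal{E}(q)$, the sub-objective $F_t^{q}(q):=\mathbb{E}_q[\hat{\phi}_t]+\mathcal{R}(q)+\tau_t\mathrm{KL}(q\|q_{t-1}^*)$ is an entropy-regularized linear functional with effective temperature $\lambda_2+\tau_t$; by Lemma~\ref{lemma:optimal-KL-prox} its minimizer $q_t^*$ is of explicit Gibbs form, and a direct computation gives
\[F_t^{q}(q)-F_t^{q}(q_t^*)=(\lambda_2+\tau_t)\,\mathrm{KL}(q\|q_t^*),\qquad \forall\, q\in\mathcal{P}_2(\mathcal{Y}),\]
and symmetrically on the $p$-side with $\eta_t$ in place of $\tau_t$. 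Rearranging produces a one-step prox inequality that combines with its $p$-side counterpart to bound $Q(w_t^*,w)$ (with $w_t^*:=(p_t^*,q_t^*)$); weighting by $\gamma_t$ and summing, the boundary KL terms in the claim telescope exactly under $\gamma_t\tau_t\le\gamma_{t-1}(\tau_{t-1}+\lambda_2)$ and $\gamma_t\eta_t\le\gamma_{t-1}(\eta_{t-1}+\lambda_2)$. The bilinear cross-term between the $p$- and $q$-inequalities is cancelled by the extrapolation built into $\hat{\phi}_t=-(1+\mu_t)\mathbb{E}_{\hat{p}_{t-1}}f+\mu_t\mathbb{E}_{\hat{p}_{t-2}}f$: once weighted by $\gamma_t$, the identity $\gamma_t\mu_t=\gamma_{t-1}$ makes inter-step cross terms telescope, and the final-step residue of the form $\mu_T(\mathbb{E}_{\hat{p}_T}-\mathbb{E}_{\hat{p}_{T-1}})\mathbb{E}_q f$ is absorbed by a Young inequality using $\tau_T\eta_{T-1}\ge\mu_T$, which leaves behind precisely the $-\tfrac{1}{4\eta_T}\mathrm{KL}(q\|q_T^*)$ correction in the statement.

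Next, the two error sources. Each replacement $p_t^*\rightsquigarrow\hat{p}_t$ or $q_t^*\rightsquigarrow\hat{q}_t$ inside an expectation of $f$ costs an additive $\epsilon$ by Lemma~\ref{finite-particle-approximation}; an induction using the recursions \eqref{h_def}--\eqref{g_def} together with $\|f\|_\infty\le 1$ shows that $p_t^*\in\mathcal{Q}_{\mathcal{X}}(1)$ and $q_t^*\in\mathcal{Q}_{\mathcal{Y}}(3)$ for every $t$ (since $|\hat{\psi}_t|\le 1$ and $|\hat{\phi}_t|\le 1+2\mu_t\le 3$), so the lemma is applicable at the fixed constants appearing in the sample size; a union bound over the $O(T)$ invocations contributes the $\log(T/\delta)$ factor. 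For the inner-loop error, the bound $\mathrm{KL}(p_t\|p_t^*)\le\delta_{t,1}$ enters through three channels: (i) the bilinear piece $\mathbb{E}_{p_t-p_t^*}\mathbb{E}_q f$, bounded via Pinsker and $\|f\|_\infty\le 1$ by $O(\sqrt{\delta_{t,1}})$; (ii) the entropy gap $\lambda_2|\mathcal{E}(p_t)-\mathcal{E}(p_t^*)|$, which I expand as $\lambda_2\mathrm{KL}(p_t\|p_t^*)+\lambda_2\mathbb{E}_{p_t-p_t^*}\log p_t^*$ and control using the $L^\infty$-bound on $\log p_t^*$ from $\mathcal{Q}$-membership, yielding $\lambda_2\delta_{t,1}+O(\sqrt{\delta_{t,1}})$; and (iii) the $\lambda_1\|x\|^2$ piece of $\mathcal{R}(p_t)-\mathcal{R}(p_t^*)$, bounded via Talagrand's $T_2$ inequality under the LSI for $p_t^*$, giving $\tfrac{2\lambda_1\sigma(p_t^*)}{\sqrt{\alpha_{p_t^*}}}\sqrt{\delta_{t,1}}+\tfrac{2\lambda_1}{\alpha_{p_t^*}}\delta_{t,1}$ — precisely the origin of the terms involving $\sigma(p_t^*)$ and $1/\alpha_{p_t^*}$. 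The symmetric $q$-side contributions carry the extra weight $(1+\mu_t)\le 2$ on $f$ from the extrapolation, producing the slightly larger universal constant $34$ in place of $30$.

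The main obstacle is the combined bookkeeping: one must verify inductively on $t$ that the explicit Gibbs potentials $h^{(t)},g^{(t)}$ stay uniformly bounded in $L^\infty$, so that $\mathcal{Q}$-membership (and therefore the finite-sample lemma at the fixed constants $C\in\{1,3\}$, together with the Talagrand/Pinsker conversions with uniformly bounded $\sigma(p_t^*)$ and $\alpha_{p_t^*}$) holds across all iterations. Without this uniform invariance, the $\sqrt{\delta_{t,i}}$ coefficients would depend on $t$ and the telescoping of the primal--dual step could not be matched cleanly against the inner-loop error line of the theorem; all other ingredients are one-line consequences of the prox identity, Lemma~\ref{finite-particle-approximation}, Pinsker, and LSI.
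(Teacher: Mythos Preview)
Your plan is the paper's: prox three-point identity at $(p_t^*,q_t^*)$, telescope under the $\gamma_t,\tau_t,\eta_t$ conditions, kill the extrapolation cross-terms with $\gamma_t\mu_t=\gamma_{t-1}$ and $\tau_t\eta_{t-1}\ge\mu_t$, then layer on the finite-particle and ULA errors. Two technical points in your write-up would not go through as stated.

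\textbf{(i) The entropy gap.} You propose to bound $\lambda_2\mathbb{E}_{p_t-p_t^*}\log p_t^*$ via an ``$L^\infty$-bound on $\log p_t^*$''. But $\log p_t^* = -l_1(x)-\tfrac{\lambda_1}{\lambda_2}\|x\|^2-\log Z$ is unbounded; only $l_1$ is in $L^\infty$. The paper splits $\log p_t^*$ into its bounded and quadratic parts and bounds the quadratic integral $\int\|x\|^2(p_t-p_t^*)$ via an optimal-coupling argument plus Talagrand (Lemma~\ref{lemma-estimation-p-barp}). So the entropy piece itself already produces an $\alpha_{p_t^*},\sigma(p_t^*)$-dependent contribution, in addition to your channel (iii); this is why the final $\sqrt{\delta}$-coefficients carry $\tfrac{8\lambda_1\sigma}{\sqrt{\alpha}}$ rather than $\tfrac{2\lambda_1\sigma}{\sqrt{\alpha}}$.

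\textbf{(ii) Cross-term bookkeeping.} The prox identity and the subsequent telescoping are valid at the \emph{starred} iterates, but $\hat{\phi}_t,\hat{\psi}_t$ are built from the empirical $\hat{p}_{t-1},\hat{p}_{t-2},\hat{q}_t$, and $Q(w_t,w)$ involves the inexact $w_t=(p_t,q_t)$. Before telescoping, the paper explicitly replaces every inexact/empirical iterate in the bilinear cross-term by its starred counterpart, paying $O(\sqrt{\delta_{t,1}}+\sqrt{\delta_{t-1,1}}+\sqrt{\delta_{t-2,1}}+\sqrt{\delta_{t,2}})$ per step via Pinsker (Eq.~\eqref{relation-2}). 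Your ``three channels'' omit this transfer, and it is the dominant source of the constants $30$ and $34$ (not merely the $(1+\mu_t)\le 2$ weight). Relatedly, the condition $\tau_t\eta_{t-1}\ge\mu_t$ is used to annihilate the \emph{intermediate} cross-terms $\mu_t\,\rmBiLin_f(p_{t-1}^*-p_{t-2}^*,q_{t-1}^*-q_t^*)$ against $\tau_t\mathrm{KL}(q_t^*\|q_{t-1}^*)+\eta_{t-1}\mathrm{KL}(p_{t-1}^*\|p_{t-2}^*)$; the boundary residue at step $T$ has no $\mu_T$ factor and is absorbed by Young against the leftover $-\eta_T\mathrm{KL}(p_T^*\|p_{T-1}^*)$, which is what produces the $-\tfrac{1}{4\eta_T}\mathrm{KL}(q\|q_T^*)$ correction.
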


To obtain the complete convergence rate of the max-min gap, we need to specify the hyper-parameters $\tau_t,\eta_t,\gamma_t$ and sub-problem error bound $\delta_{t,1},\delta_{t,2}$ for the minimax problem \eqref{eq:problem_particle}. 
By setting the input parameters as
\begin{equation}\label{parameter-setting}
\mu_t = \mu=\frac{1}{2}\left(2+\lambda_2^2-\lambda_2\sqrt{4+\lambda_2^2}\right),\quad
    \eta_t=\tau_t=\frac{\lambda_2\mu}{1-\mu},\quad\gamma_t=\mu^{-t},
\end{equation}
and defining $c(p)=\min\left\{\frac{1}{2}, \alpha_{p}, \frac{\alpha_{p}}{2(\sigma(p))^2}\right\}$, $c(q)=\min\left\{\frac{1}{2}, \alpha_{q}, \frac{\alpha_{q}}{2(\sigma(q))^2}\right\}$ as the upper bound of the inner loop errors $\delta_{t,1}$, $\delta_{t,2}$, we are ready to provide the following outer loop error bound of our proposed algorithm with an inexact solver. It is worth noting that the parameter selection in \eqref{parameter-setting} meets the pre-conditions of Theorem \ref{main-text-outer-loop-thm}, in addition to satisfying $\mu\leq1$ and $\mu^2-(2+\lambda_2^2)\mu+1=0$. Moreover, noticing that MNE $w_*=(p_*,q_*)$ satisfies Eq.~\eqref{explicit-expression} and combining the optimality condition in Lemma \ref{lemma:optimal-KL-prox}, we can deduce that $p_*\in\cQ_{\cX}(1)$ and $q_*\in\cQ_{\cY}(3)$. Therefore, we derive the following convergence result by combining Theorem \ref{main-text-outer-loop-thm}.
\begin{corollary}\label{main-text-outer-loop-complexity}
    Setting the parameters as Eq.~\eqref{parameter-setting} and assuming the sub-problem error bounds satisfy $\max\{\delta_{t,1},\delta_{t,2}\}\leq\frac{1}{2}\min\{c(p_t^*), c(q_t^*)\}T^{-2J}$ for any fixed positive integer $J$ and $\epsilon>0$ at each step-$t$, the max-min gap between $\Bar{w}_T:=\frac{\sum_{t=1}^T\gamma_t w_t}{\sum_{t=1}^T\gamma_t}$ and $\bar{w}_T^*:=\argmax\limits_{w\in\mathcal{P}_2(\mathcal{X})\times\mathcal{P}_2(\mathcal{Y})}Q(\bar{w}_T,w)$ satisfies
    \begin{align}\label{max-min-gap}
        0\leq Q(\Bar{w}_T,\bar{w}_T^*)\leq&\frac{\lambda_2\mu^T}{1-\mu}\left[\mathrm{KL}(\bar{p}_T^*\|p_0)+\mathrm{KL}(\bar{q}_T^*\|q_0)\right]
        +8\epsilon+(10\lambda_1+\lambda_2+32)T^{-J}.
    \end{align}
    with probability at least $1-{\delta}$ for sample size $$M=\mathcal{O}\left(\epsilon^{-2}\left[\log( T\delta^{-1})+d\log\left(1+L_0r_{3}(\epsilon,\lambda_1,\lambda_2)\epsilon^{-1}\right)\right]\right),$$ in each iteration where $d=\max\{m,n\}$ , $\Bar{w}_T=\frac{\sum_{t=1}^T\gamma_t w_t}{\sum_{t=1}^T\gamma_t}$. Moreover, our high probability convergence also holds under the KL-divergence
    \begin{align}\label{KL-divergence-convergence}
        \mathrm{KL}(p_*\|p_T^*)+\frac{1}{2}\mathrm{KL}(q_*\|q_T^*)
        \leq&\mu^T[\mathrm{KL}(q_*\|q_0^*)+\mathrm{KL}(p_*\|p_0^*)]
        \notag
        \\
        &+\lambda_2^{-1}\left[8\epsilon+(10\lambda_1+\lambda_2+32)T^{-J}\right],
    \end{align}
    and the $W_2$ distance as follows
    \begin{equation*}
    \label{Wassenstain-convergence}
    \begin{split}
        \frac{1}{2}W_2^2(q_T, q_*)+W_2^2(p_T, p_*)\leq&\frac{4}{\min\{\alpha_{p_T^*}, \alpha_{q_T^*}\}}\left\{\mu^{T}\left[\mathrm{KL}(q_*\|q_0^*)+\mathrm{KL}(p_*\|p_0^*)\right]\right.
        \notag
        \\
        &+\left.\lambda_2^{-1}\left[8\epsilon+(10\lambda_1+\lambda_2+32)T^{-J}\right]\right\}+3T^{-2J},
    \end{split}
    \end{equation*}
    with respect to the MNE $w_*=(p_*,q_*)$ of Eq.~\eqref{eq:problem_particle}.
\end{corollary}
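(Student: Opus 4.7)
The plan is to apply Theorem \ref{main-text-outer-loop-thm} with two different witness choices of $w$ and then convert the resulting bounds into KL and Wasserstein statements via Talagrand's inequality. First I would verify that the hyperparameters in Eq.~\eqref{parameter-setting} satisfy the four preconditions of Theorem \ref{main-text-outer-loop-thm}. The three conditions $\gamma_t\mu_t = \gamma_{t-1}$, $\gamma_t\tau_t \leq \gamma_{t-1}(\tau_{t-1}+\lambda_2)$, and $\gamma_t\eta_t \leq \gamma_{t-1}(\eta_{t-1}+\lambda_2)$ follow directly from $\gamma_t = \mu^{-t}$ with the constant schedule $\eta_t = \tau_t = \lambda_2\mu/(1-\mu)$. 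The only substantive check is $\tau_t\eta_{t-1}\geq\mu_t$, which reduces to $\lambda_2^2\mu\geq(1-\mu)^2$ and in fact holds with equality by the defining quadratic $\mu^2-(2+\lambda_2^2)\mu+1=0$. This identity is crucial throughout the rest of the proof, since it collapses $\eta_T+\lambda_2=\lambda_2/(1-\mu)$ and $4\eta_T(\eta_T+\lambda_2)=4$.

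For the max-min gap bound Eq.~\eqref{max-min-gap}, I would instantiate Theorem \ref{main-text-outer-loop-thm} with $(p,q)=(\bar{p}_T^*,\bar{q}_T^*)$. Applying Lemma \ref{lemma:optimal-KL-prox} to each best-response sub-problem (whose linear potential is bounded by $1/\lambda_2$ since $|f|\leq 1$) places these best responses in $\mathcal{Q}_{\mathcal{X}}(1)\times\mathcal{Q}_{\mathcal{Y}}(1)\subset\mathcal{Q}_{\mathcal{X}}(1)\times\mathcal{Q}_{\mathcal{Y}}(3)$, as required by the theorem. I would then drop the two non-positive KL residuals on the right, divide by $\sum_t\gamma_t=\mu^{-T}(1-\mu^T)/(1-\mu)$, and use convexity of $Q(\cdot,w)$ in its first argument (bilinearity of the $f$-term plus convexity of $\mathcal{R}$) together with Jensen's inequality to land $Q(\bar{w}_T,\bar{w}_T^*)$ on the left. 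The leading prefactor satisfies $\gamma_1\eta_1/\sum_t\gamma_t=\lambda_2\mu^T/(1-\mu^T)\leq\lambda_2\mu^T/(1-\mu)$, and the residual terms from Theorem \ref{main-text-outer-loop-thm} collapse into $8\epsilon+O(T^{-J})$ once the hypothesis $\max\{\delta_{t,1},\delta_{t,2}\}\leq\tfrac{1}{2}\min\{c(p_t^*),c(q_t^*)\}T^{-2J}$ is invoked; the three-term definition $c(p)=\min\{1/2,\alpha_p,\alpha_p/(2\sigma(p)^2)\}$ is tailored exactly so that the factors $2\lambda_1/\alpha$ and $8\lambda_1\sigma/\sqrt{\alpha}$ multiplying $\delta$ and $\sqrt{\delta}$ all absorb into this scale.

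For Eq.~\eqref{KL-divergence-convergence} I would re-instantiate Theorem \ref{main-text-outer-loop-thm} with $(p,q)=(p_*,q_*)$; by Theorem \ref{existence-nash} and Lemma \ref{lemma:optimal-KL-prox} the MNE sits in $\mathcal{Q}_{\mathcal{X}}(1)\times\mathcal{Q}_{\mathcal{Y}}(1)$. The saddle-point property gives $Q(w_t,w_*)\geq 0$ for every $t$, so the LHS $\sum_t\gamma_t Q(w_t,w_*)$ is non-negative; dropping it transposes the two negative KL residuals to yield $\gamma_T(\eta_T+\lambda_2)\mathrm{KL}(p_*\|p_T^*)+\gamma_T(\tau_T+\lambda_2-1/(4\eta_T))\mathrm{KL}(q_*\|q_T^*)\leq\gamma_1\eta_1\mathrm{KL}(p_*\|p_0^*)+\gamma_1\tau_1\mathrm{KL}(q_*\|q_0^*)+\mathrm{err}$. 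Dividing by $\gamma_T(\eta_T+\lambda_2)=\gamma_T\lambda_2/(1-\mu)$ gives the contraction $\gamma_1/\gamma_T\cdot\eta_1/(\eta_T+\lambda_2)=\mu^T$ on the initial KL, and the coefficient of $\mathrm{KL}(q_*\|q_T^*)$ simplifies to $1-1/(4\eta_T(\eta_T+\lambda_2))=3/4\geq 1/2$. For the Wasserstein bound I would split $W_2^2(p_T,p_*)\leq 2W_2^2(p_T,p_T^*)+2W_2^2(p_T^*,p_*)$ and the analogous split for $q$, apply Talagrand's inequality (implied by LSI of $p_T^*,q_T^*$) to each piece, control the inner-loop pieces by $2\delta_{T,i}/\alpha\leq T^{-2J}$ using $c(p)\leq\alpha_p/2$, and plug the KL bound above into the outer pieces with uniform factor $4/\min\{\alpha_{p_T^*},\alpha_{q_T^*}\}$. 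The main obstacle is not conceptual but algebraic: the delicate coordination between the quadratic root $\mu$, the three-term minimum defining $c(p)$, and the telescoping of the $\gamma$, $\eta$, $\tau$ factors is what makes all the constants collapse into the clean $\mu^T+8\epsilon+O(T^{-J})$ form stated in the corollary.
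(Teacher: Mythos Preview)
Your proposal is correct and follows essentially the same approach as the paper's own proof: instantiate Theorem \ref{main-text-outer-loop-thm} first at $w=\bar{w}_T^*$ (using convexity of $Q(\cdot,w)$ and Jensen to pass to $\bar{w}_T$), then at $w=w_*$ (using $Q(w_t,w_*)\geq 0$ to isolate the negative KL residuals), and finally combine Talagrand's inequality with the triangle-type bound $W_2^2(\nu,\rho)\leq 2W_2^2(\nu,p)+2W_2^2(p,\rho)$ for the Wasserstein statement. Your algebraic observations---that $\lambda_2^2\mu=(1-\mu)^2$ forces $4\eta_T(\tau_T+\lambda_2)=4$ and hence the $q$-coefficient equals $3/4\geq 1/2$, and that the three-term minimum in $c(p)$ absorbs each of the four error coefficients into a uniform $O(T^{-J})$---are exactly the computations the paper performs, and your handling of the prefactor $\gamma_1\eta_1/\sum_t\gamma_t$ is in fact slightly cleaner than the paper's.
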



\subsubsection{Inner Loop Error Bound}\label{subsection-5.2}
In the preceding sub-section, we made the assumption that the KL divergence between $(q_t,p_t)$ and $(q_t^*,p_t^*)$ can be upper bounded by $(\delta_{t,1},\delta_{t,2})$. In this sub-section, we provide evidence that the ULA algorithm effectively constrains the inner error bound $\mathrm{KL}(p_t\|p_t^*), \mathrm{KL}(q_t\|q_t^*)$ to a sufficiently small value (i.e. $\delta_{t,1},\delta_{t,2}$) within polynomial time. 
We demonstrate that the summation of a quadratic function and a bounded function exhibits the log-Sobolev property. Similar to PDA \citep{nitanda2021particle}, we utilize the Holley-Strook argument to estimate the log-Sobolev constant. Consequently,  ULA converges efficiently to the target distribution, leveraging the log-Sobolev property.
\rev{There are also other ways of estimating the log-Sobolev constants, such as Miclo's trick. However, when leveraging the Lipschitz continuity, the log-Sobolev constant has exponential dependency on both dimension and Lipschitz constant. We focus on the  the bounded condition for better computation complexity.}

\begin{lemma}\label{log-sobolev-exp-bounded}
\citep{holley1986logarithmic}
    Consider a probability density $p(\theta)$ on $\mathbb{R}^{d_\theta}$ satisfying the log-Sobolev inequality with a constant $\alpha$. For a bounded function $H:\mathbb{R}^{d_\theta}\rightarrow \mathbb{R}$, we let a probability density $p_H(\theta)$ denote
    $
        p_H(\theta):=\frac{\exp{(H(\theta))}p(\theta)}{\mathbb{E}_p[\exp{(H(\theta))}]}.
    $
    Then, $p_H(\theta)$ satisfies the log-Sobolev inequality with $\alpha/\exp{(4\|H\|_\infty)}$.
\end{lemma}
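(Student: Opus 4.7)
The plan is to prove this via the classical Holley--Stroock perturbation argument. The key idea is that both sides of the log-Sobolev inequality behave well under bounded reweighting: the entropy functional can be controlled through a variational representation, and the Dirichlet form can be re-expressed in terms of the perturbed density with only multiplicative factors of $e^{\pm\|H\|_\infty}$.

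The first step is to invoke the variational identity
$$\mathrm{Ent}_\mu(f) = \inf_{c>0} \EE_\mu[\Phi(f,c)], \qquad \Phi(f,c) := f\log f - f\log c - f + c,$$
which follows from convexity of $x\log x$ (in particular $\Phi(f,c)\ge 0$ pointwise, and the minimum in $c$ is attained at $c=\EE_\mu[f]$). Writing $Z_H:=\EE_p[e^{H}]$ so that $p_H = Z_H^{-1} e^{H} p$, I would plug the \emph{suboptimal} choice $c=\EE_p[g^2]$ into the variational formula for $\mathrm{Ent}_{p_H}(g^2)$. This gives
$$\mathrm{Ent}_{p_H}(g^2) \;\le\; \EE_{p_H}[\Phi(g^2,\EE_p[g^2])] \;=\; \frac{1}{Z_H}\EE_p\bigl[e^{H}\,\Phi(g^2,\EE_p[g^2])\bigr].$$
Since $\Phi\ge 0$ and $e^{H}\le e^{\|H\|_\infty}$, while $Z_H\ge e^{-\|H\|_\infty}$, and since $c=\EE_p[g^2]$ is the \emph{optimal} choice for $\mathrm{Ent}_p(g^2)$, I get $\mathrm{Ent}_{p_H}(g^2)\le e^{2\|H\|_\infty}\mathrm{Ent}_p(g^2)$.

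The second step is to apply the hypothesized LSI for $p$, yielding $\mathrm{Ent}_p(g^2)\le (2/\alpha)\EE_p[\|\nabla g\|^2]$, and then convert this Dirichlet form back to one against $p_H$ via $p = Z_H e^{-H} p_H$:
$$\EE_p[\|\nabla g\|^2] = Z_H\,\EE_{p_H}[e^{-H}\|\nabla g\|^2] \le Z_H e^{\|H\|_\infty}\EE_{p_H}[\|\nabla g\|^2] \le e^{2\|H\|_\infty}\EE_{p_H}[\|\nabla g\|^2].$$
Chaining these two perturbation factors produces $\mathrm{Ent}_{p_H}(g^2)\le (2e^{4\|H\|_\infty}/\alpha)\,\EE_{p_H}[\|\nabla g\|^2]$, which is exactly the LSI for $p_H$ with constant $\alpha/\exp(4\|H\|_\infty)$.

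No step here looks obstructive: everything after the variational identity is routine $L^\infty$ bookkeeping with $e^{\pm H}$. The one place worth double-checking is that the variational formula and the nonnegativity of $\Phi$ are used in the right direction (a strict inequality is needed for $\mathrm{Ent}_{p_H}$ but the identity is only needed at the value $c=\EE_p[g^2]$ for $\mathrm{Ent}_p$), and that the two factors of $e^{2\|H\|_\infty}$ multiply rather than cancel. The appearance of the exponent $4\|H\|_\infty$ (rather than $2\|H\|_\infty$) is precisely because both the entropy and the Dirichlet form incur a separate $e^{2\|H\|_\infty}$ distortion under reweighting by $e^{H}/Z_H$.
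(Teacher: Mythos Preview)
Your argument is the standard Holley--Stroock perturbation proof and is correct: the variational identity for entropy is used in the right direction, the nonnegativity of $\Phi$ justifies pulling out $e^{\|H\|_\infty}$, and the two $e^{2\|H\|_\infty}$ factors (one from the entropy side, one from the Dirichlet form) combine to give the stated constant $\alpha/\exp(4\|H\|_\infty)$.

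The paper itself does not supply a proof of this lemma; it is stated with a citation to \citet{holley1986logarithmic} and used as a black box to show that the sub-problem targets $p_t^*,q_t^*$ inherit log-Sobolev constants from the Gaussian prior. So there is no ``paper proof'' to compare against---your write-up is exactly the argument the citation points to.
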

It is clear that Eq.~\eqref{h_def} and \eqref{g_def} can be rewritten as a summation of a bounded function and quadratic function. Lemma \ref{log-sobolev-exp-bounded} shows that these two sub-problem can be log-Sobolev.
For any $t\geq0$,
we can compute that the log-Sobolev constant for  $q_t^*$, 
$p_t^*$ are $\frac{\lambda_1}{\lambda_2}\exp{\left(-\frac{12}{\lambda_2}\right)}$, $\frac{\lambda_1}{\lambda_2}\exp{\left(-\frac{4}{\lambda_2}\right)}$, respectively, only depending on the choice of $\lambda_1$ and $\lambda_2$ (Refer to Appendix Lemma \ref{estimation-log} for the constant estimation). The next theorem shows the convergence rate.
\begin{theorem}\label{main-text-thm-ULA} \citep{vempala2019rapid}
    Assume that $f(\theta)$ is smooth and has $L_1$-Lipschitz gradient, and consider a probability density $p(\theta)\propto\exp{(-f(\theta))}$ satisfying the log-Sobolev inequality with constant $\alpha$, ULA with step size $0<\iota<\frac{\alpha}{4L_1^2}$ satisfies
    $
        \mathrm{KL}(\rho_k\|p)\leq e^{-\alpha\iota k}\mathrm{KL}(\rho_0\|p)+\frac{8\iota d_\theta L_1^2}{\alpha},
    $
    where $d_\theta$ is the dimension.
\end{theorem}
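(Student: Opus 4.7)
The plan is to follow the standard approach of Vempala and Wibisono: interpret one ULA step as a continuous-time interpolated SDE, differentiate the KL divergence along the Fokker--Planck flow of the interpolation, and combine the resulting differential inequality with the log-Sobolev inequality via Gr\"onwall's lemma. Concretely, for $s\in[k\iota,(k+1)\iota]$ define $X_s = X_{k\iota} - (s-k\iota)\nabla f(X_{k\iota}) + \sqrt{2}(W_s - W_{k\iota})$, so that $X_{(k+1)\iota}$ is exactly the next ULA iterate, and let $\rho_s$ denote its law. This process satisfies a Fokker--Planck equation with drift $b_s(x) := -\mathbb{E}[\nabla f(X_{k\iota})\mid X_s = x]$ and diffusion $\sqrt{2}$.

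First, I would differentiate $\mathrm{KL}(\rho_s\|p)$ along this flow. A direct computation (integration by parts) gives
\begin{equation*}
\frac{d}{ds}\mathrm{KL}(\rho_s\|p) = -\,\mathbb{E}_{\rho_s}\!\bigl\langle \nabla\log(\rho_s/p),\, \nabla\log(\rho_s/p) + b_s + \nabla f\bigr\rangle,
\end{equation*}
and using $b_s(X_s) = -\mathbb{E}[\nabla f(X_{k\iota})\mid X_s]$ together with the tower property, this simplifies to
\begin{equation*}
\frac{d}{ds}\mathrm{KL}(\rho_s\|p) = -\,I(\rho_s\|p) + \mathbb{E}\bigl\langle \nabla\log(\rho_s/p)(X_s),\; \nabla f(X_s) - \nabla f(X_{k\iota})\bigr\rangle,
\end{equation*}
where $I(\rho\|p) := \mathbb{E}_\rho\|\nabla\log(\rho/p)\|^2$ is the relative Fisher information. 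A Cauchy--Schwarz/Young splitting on the cross term, followed by $L_1$-Lipschitzness of $\nabla f$, yields
\begin{equation*}
\frac{d}{ds}\mathrm{KL}(\rho_s\|p) \le -\tfrac{3}{4}\,I(\rho_s\|p) + L_1^2\,\mathbb{E}\|X_s - X_{k\iota}\|^2.
\end{equation*}

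Second, I would control the one-step displacement. From the definition of $X_s$,
\begin{equation*}
\mathbb{E}\|X_s - X_{k\iota}\|^2 \le 2(s-k\iota)^2\,\mathbb{E}\|\nabla f(X_{k\iota})\|^2 + 4(s-k\iota)d_\theta,
\end{equation*}
and one bounds $\mathbb{E}\|\nabla f(X_{k\iota})\|^2$ in terms of the current Fisher information and dimensional terms using $\nabla\log p = -\nabla f$ and the $L_1$-smoothness; the step-size condition $\iota < \alpha/(4L_1^2)$ is calibrated to absorb the $\iota^2 L_1^2\,\mathbb{E}\|\nabla f\|^2$ contribution back into the leading $-I(\rho_s\|p)/4$ term. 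After this absorption one obtains
\begin{equation*}
\frac{d}{ds}\mathrm{KL}(\rho_s\|p) \le -\tfrac{1}{2}\,I(\rho_s\|p) + 4\iota d_\theta L_1^2.
\end{equation*}

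Finally, applying the log-Sobolev inequality $I(\rho_s\|p)\ge 2\alpha\,\mathrm{KL}(\rho_s\|p)$ gives the differential inequality $\frac{d}{ds}\mathrm{KL}(\rho_s\|p) \le -\alpha\,\mathrm{KL}(\rho_s\|p) + 4\iota d_\theta L_1^2$; Gr\"onwall integration over one step yields $\mathrm{KL}(\rho_{(k+1)\iota}\|p) \le e^{-\alpha\iota}\mathrm{KL}(\rho_{k\iota}\|p) + 8\iota d_\theta L_1^2/\alpha$ (the factor $8$ arising from integrating the constant forcing against the exponential), and iterating over $k$ steps, together with the geometric sum $\sum_{j=0}^{k-1}e^{-\alpha\iota j}\le 1/(1-e^{-\alpha\iota})\le 1/(\alpha\iota/2)$ for small $\iota$, produces the stated bound. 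The main obstacle is the second paragraph: carefully handling the cross term so that the constants line up to give the stated bias $8\iota d_\theta L_1^2/\alpha$, which requires the Young-inequality split to extract exactly the right share of $I(\rho_s\|p)$ to absorb the $\iota^2 L_1^2\,\mathbb{E}\|\nabla f(X_{k\iota})\|^2$ discretization term under the step-size restriction.
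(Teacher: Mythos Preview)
Your proposal correctly reproduces the Vempala--Wibisono argument: the interpolated-SDE viewpoint, differentiation of $\mathrm{KL}(\rho_s\|p)$ along the Fokker--Planck flow, the Young-inequality split of the cross term, smoothness to bound the discretization error, absorption of $\mathbb{E}\|\nabla f(X_{k\iota})\|^2$ using the step-size restriction, and then LSI plus Gr\"onwall. This is exactly the route taken in the cited reference.

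However, there is nothing to compare against: the paper does not supply its own proof of this theorem. Both in the main text (Theorem~\ref{main-text-thm-ULA}) and in the appendix (Theorem~\ref{ULA-error-bound}) the result is simply quoted from \citet{vempala2019rapid} and used as a black box to control the inner-loop error of ULA. So your sketch is not an alternative to the paper's argument; it \emph{is} the argument the paper defers to.
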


\subsubsection{Global Convergence}\label{subsection-5.3}
By leveraging Corollary \ref{main-text-outer-loop-complexity} and Theorem \ref{main-text-thm-ULA}, we are now ready to provide the global convergence of PAPAL. The ensuing theorem establishes the global error bound, achieved through the acquisition of a pair of $\frac{1}{2}\min\{c(p_t^*), c(q_t^*)\}T^{-2J}$-approximate sub-problem solutions $(p_t,q_t)$ under metrics $\mathrm{KL}(\cdot\|p_t^*)$ and $\mathrm{KL}(\cdot\|q_t^*)$ at each step-$t$, along the trajectory of ULA.
\begin{theorem}\label{main-text-global-convergence}
    Let $\epsilon$ be the desired accuracy and set the parameters $\gamma_t,\, \eta_t,\, \tau_t,\, \mu_t$ as in Eq.~\eqref{parameter-setting}. Assuming Assumptions \ref{ass:a1},  \ref{ass:a2} and \ref{ass:a4}, executing Langevin algorithm with step size $\iota=\mathcal{O}\left(\min\left\{1,\frac{\lambda_1^2}{\lambda_2^2}\right\}\frac{\lambda_1\lambda_2\epsilon^2}{d^2(L_1+\lambda_1)^2\exp{(30/\lambda_2)}}\right)$ for $T_t=\mathcal{O}\left(\iota^{-1}\lambda_1^{-1}[(\lambda_2+1)\exp{(30/\lambda_2)}\right.$ $\left.\log(\epsilon^{-1})]\right)$ iterations on the inner loop yeilds an $\epsilon$-mixed Nash equilibrium: $\max\limits_{w\in\cP_2(\cX)\times\cP_2(\cY)}Q(\Bar{w},w)\leq\epsilon$ with probability at least $1-\delta$ when running Algorithm \ref{alg:algorithm} for iterations given by
    $$T=\mathcal{O}\left(\max\left\{\log\left((1+\lambda_2^{-1})\epsilon^{-1}\right)g^{-1}(\lambda_2),(32+10\lambda_1+\lambda_2)\epsilon^{-1/J}\right\}\right),$$ 
    and sample size $M=\mathcal{O}\left(\epsilon^{-2}\left[\log\left(T\delta^{-1} \right)+d\log\left(1+L_0r_{3}(\epsilon,\lambda_1,\lambda_2)\epsilon^{-1}\right)\right]\right)$
    on the outer loop, where $d = \max\{m,n\}$.
\end{theorem}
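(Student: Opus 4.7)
The plan is to chain the outer-loop guarantee of Corollary \ref{main-text-outer-loop-complexity} with the ULA rate of Theorem \ref{main-text-thm-ULA}, calibrating the inner-loop step size $\iota$ and iteration count $T_t$ so that the per-step KL error $\delta_{t,1},\delta_{t,2}$ in the sub-problems is small enough to make the outer bound at most $\epsilon$, while propagating the finite-particle concentration error through Lemma \ref{finite-particle-approximation} by a union bound.

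First I would apply Corollary \ref{main-text-outer-loop-complexity} with the prescribed parameter choice Eq.~\eqref{parameter-setting} to write
\[
Q(\bar{w}_T,\bar{w}_T^*)\;\le\;\frac{\lambda_2\mu^T}{1-\mu}\bigl[\mathrm{KL}(\bar{p}_T^*\|p_0)+\mathrm{KL}(\bar{q}_T^*\|q_0)\bigr]+8\epsilon'+(10\lambda_1+\lambda_2+32)T^{-J},
\]
where $\epsilon'$ denotes the finite-particle accuracy parameter entering Lemma \ref{finite-particle-approximation}. I would set $\epsilon'=\Theta(\epsilon)$ so the concentration term absorbs into the target. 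Since $\mu$ is the smaller root of $\mu^2-(2+\lambda_2^2)\mu+1=0$, a short expansion gives $\log(1/\mu)=\Theta(g(\lambda_2))$; hence the exponential-decay term drops below $\epsilon$ once $T\gtrsim\log((1+\lambda_2^{-1})\epsilon^{-1})/g(\lambda_2)$, while the polynomial term is below $\epsilon/(32+10\lambda_1+\lambda_2)$ once $T\gtrsim(32+10\lambda_1+\lambda_2)\epsilon^{-1/J}$. Taking the maximum reproduces the stated $T$, and the sample complexity $M$ follows by rescaling the per-step failure probability to $\delta/T$ in Lemma \ref{finite-particle-approximation} and union-bounding over outer iterations.

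Second, to supply the inner-loop accuracy $\max\{\delta_{t,1},\delta_{t,2}\}\le\tfrac{1}{2}\min\{c(p_t^*),c(q_t^*)\}T^{-2J}$ required by Corollary \ref{main-text-outer-loop-complexity}, I would instantiate Theorem \ref{main-text-thm-ULA} as follows: (i) decompose the Gibbs potentials $h^{(t)},g^{(t)}$ defined in Eqs.~\eqref{h_def}, \eqref{g_def} into a quadratic piece (from the $\lambda_1\|\cdot\|^2$ term) plus a uniformly bounded piece (from the linearized payoff and the recursion), then invoke Lemma \ref{log-sobolev-exp-bounded} to obtain log-Sobolev constants $\alpha_{p_t^*},\alpha_{q_t^*}=\Omega\bigl((\lambda_1/\lambda_2)\exp(-O(1/\lambda_2))\bigr)$; (ii) use Assumption \ref{ass:a2} and boundedness of $f$ to bound the Lipschitz-gradient constants of $h^{(t)},g^{(t)}$ by $O(L_1+\lambda_1)$; (iii) pick $\iota$ of the stated order so that simultaneously $\iota<\alpha/(4L_1^2)$ and the bias term $8\iota d L_1^2/\alpha$ in Theorem \ref{main-text-thm-ULA} is below half the required KL tolerance $T^{-2J}$; (iv) pick $T_t$ of the stated order so that the exponential contribution $e^{-\alpha\iota T_t}\mathrm{KL}(\rho_0\|p_t^*)$ drops below the other half, which is possible because Lemma \ref{lemma:optimal-KL-prox} keeps every $p_t^*,q_t^*$ inside $\cQ_\cX(C),\cQ_\cY(C)$ for a constant $C=O(1)$, yielding a warm-start KL uniform in $t$.

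The main obstacle is reconciling (iii) and (iv) of the inner-loop analysis: the step size $\iota$ must be small enough for both ULA stability and a vanishing asymptotic bias, while $T_t$ must be large enough for exponential decay yet still polynomial in $\epsilon^{-1}$, $d$, and $\exp(O(1/\lambda_2))$. Careful bookkeeping of the exponential-in-$1/\lambda_2$ factors produced by Lemma \ref{log-sobolev-exp-bounded} is what generates the $\exp(30/\lambda_2)$ scale appearing in the theorem's parameter choices; this factor must be propagated consistently from the potentials through Theorem \ref{main-text-thm-ULA} and back into the outer-loop error budget. Once reconciled, plugging the $T^{-2J}$-bounded inner-loop errors into Corollary \ref{main-text-outer-loop-complexity} collapses the three contributions to $O(\epsilon)$, giving the claimed $\epsilon$-MNE with the asserted runtime and sample complexity.
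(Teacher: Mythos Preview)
Your proposal is correct and follows essentially the same route as the paper: invoke Corollary \ref{main-text-outer-loop-complexity} for the outer loop, establish uniform LSI constants for $p_t^*,q_t^*$ via the Holley--Stroock perturbation lemma (Lemma \ref{log-sobolev-exp-bounded}), then calibrate $\iota$ and $T_t$ through Theorem \ref{main-text-thm-ULA} so that $\max\{\delta_{t,1},\delta_{t,2}\}\le\tfrac12\min\{c(p_t^*),c(q_t^*)\}T^{-2J}$. Two small remarks: the membership $p_t^*\in\cQ_\cX(1),\,q_t^*\in\cQ_\cY(3)$ is the content of Lemma \ref{estimation-log}, not Lemma \ref{lemma:optimal-KL-prox}; and to lower-bound $c(p_t^*),c(q_t^*)$ you also need an explicit upper bound on the second moments $\sigma(p_t^*)^2,\sigma(q_t^*)^2$, which the paper obtains directly from the $\cQ$-membership (it yields $\sigma(p_t^*)^2\le\frac{\lambda_2 m}{2\lambda_1}e^{2/\lambda_2}$ and $\sigma(q_t^*)^2\le\frac{\lambda_2 n}{2\lambda_1}e^{6/\lambda_2}$), and this is exactly what produces the extra $d$ and $\exp(30/\lambda_2)$ factors in $\iota$.
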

Then we obtain the computation time $T=\Tilde{\mathcal{O}}(\max\{\log(\epsilon^{-1})\lambda_2^{-1},\, \epsilon^{-1/J}\})$ if $\lambda_2 \rightarrow 0$, which indicates the limiting behavior of vanishing entropy regularizer. The dependence on $\lambda_2^{-1}$ in our convergence rate is a direct consequence of the classical LSI perturbation lemma \citep{holley1986logarithmic}, which is likely unavoidable for Langevin-based methods in the most general setting \citep{menzPoin2014}. Moreover, according to Eq.~\eqref{max-min-gap-control-entropy}, theorem \ref{main-text-global-convergence} can also control $\mathrm{KL}(\Bar{p}\|\Bar{p}^*)$ and $\mathrm{KL}(\Bar{q}\|\Bar{q}^*)$ similarly. In addition, based on Corollary \ref{main-text-outer-loop-complexity}, we can also propose the following global convergence under metrics $\mathrm{KL}(p_*\|\cdot), \mathrm{KL}(q_*\|\cdot)$ and $W_2(\cdot,p_*), W_2(\cdot,q_*)$ with respect to MNE $w_*=(p_*,q_*)$ of Eq.~\eqref{eq:problem_particle}.
\begin{corollary}\label{col:kl_conv}
    Let $\epsilon$ be the desired accuracy. Under Assumptions \ref{ass:a1}, \ref{ass:a2} and  \ref{ass:a4}, if we execute ULA with a step size of $\iota=\mathcal{O}\left(\min\left\{\frac{\lambda_1}{\lambda_2},\frac{\lambda_1^3}{\lambda_2^3}\right\}\frac{\lambda_2^4\epsilon^2}{(\max\{m,n\})^2(L_1+\lambda_1)^2\exp{(30/\lambda_2)}}\right)$ for $T_t=\mathcal{O}\left(\iota^{-1}\lambda_1^{-1}[(\lambda_2+1)\exp{(30/\lambda_2)}\log(\lambda_2^{-1}\epsilon^{-1})]\right)$ iterations on the inner loop, we can approximate an $\epsilon$-accurate KL divergence: $\mathrm{KL}(p_*\|p_T^*)+\frac{1}{2}\mathrm{KL}(q_*\|q_T^*)\leq \epsilon$ with probability no less than $1-\delta$ 
    when running Algorithm \ref{alg:algorithm} for iterations $T$
    and sample size $M$ satisfy
    $$T=\mathcal{O}\left(\max\left\{\log\left((\lambda_2\epsilon)^{-1}\right)g^{-1}(\lambda_2),\, \, (32+10\lambda_1+\lambda_2)(\lambda_2\epsilon)^{-1/J}\right\}\right),$$ 
    $$M = \mathcal{O}\left(\lambda_2^{-2}\epsilon^{-2}\left[\log(T\delta^{-1})+d\log\left(1+L_0r_3(\lambda_2\epsilon,\lambda_1,\lambda_2)\right)\right]\right),$$
    on the outer loop with respect to the MNE $w_*=(p_*,q_*)$ of \eqref{eq:problem_particle}, where $d = \max\{m,n\}$. Furthermore, noting that $p_T$ and $q_T$ satisfying $\max\{\mathrm{KL}(p_T\|p_T^*),\, \mathrm{KL}(q_T\|q_T^*)\}\leq\mathcal{O}(\lambda_2^2\epsilon^2)$, we can also approximate an $\epsilon$-accurate $W_2$ distance: $W_2^2(p_T,p_*)+\frac{1}{2}W_2^2(q_T,q_*)\leq\epsilon$ with similar high probability when running Algorithm \ref{alg:algorithm} for iterations $T$
    and sample size $M$ satisfy
    $$T=\mathcal{O}\left(\max\left\{\log\left((\Tilde{g}(\lambda_1,\lambda_2)\epsilon)^{-1}\right)g^{-1}(\lambda_2),\, \, (32+10\lambda_1+\lambda_2)(\Tilde{g}(\lambda_1,\lambda_2)\epsilon)^{-1/J}\right\}\right),$$ 
    $$M = \mathcal{O}\left(\Tilde{g}^{-2}(\lambda_1,\lambda_2)\epsilon^{-2}\left[\log(T\delta^{-1})+d\log\left(1+L_0r_3(\Tilde{g}(\lambda_1,\lambda_2)\epsilon,\lambda_1,\lambda_2)\right)\right]\right),$$
    on the outer loop where $\Tilde{g}(\lambda_1,\lambda_2):=\min\left\{1,\lambda_2,\lambda_1(\lambda_2)^{-1}\exp\{-12(\lambda_2)^{-1}\}\right\}$.
\end{corollary}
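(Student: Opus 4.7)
The strategy is to cascade Corollary \ref{main-text-outer-loop-complexity} with Theorem \ref{main-text-thm-ULA}: the outer-loop corollary reduces the question of global convergence to choosing $T$, the sample size $M$, and the per-step inner-loop accuracy $\delta_{t,i}$; Theorem \ref{main-text-thm-ULA} then specifies the step size $\iota$ and iteration count $T_t$ needed to realize that accuracy. I would run this reduction twice, first for the KL metric \eqref{KL-divergence-convergence} and then for the $W_2$ metric.

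For the KL bound, the RHS of \eqref{KL-divergence-convergence} decomposes into three terms: the mixing term $\mu^T(\mathrm{KL}(p_*\|p_0^*)+\mathrm{KL}(q_*\|q_0^*))$, the sample-approximation term $8\lambda_2^{-1}\epsilon_{\mathrm{samp}}$ (where $\epsilon_{\mathrm{samp}}$ is the particle-level $\epsilon$ in Corollary \ref{main-text-outer-loop-complexity}), and the inner-loop residual $\lambda_2^{-1}(10\lambda_1+\lambda_2+32)T^{-J}$. I would force each to be at most $\epsilon/3$: using $\log(1/\mu)=\Theta(g(\lambda_2))$, the first yields $T=\Omega(\log(\epsilon^{-1})g^{-1}(\lambda_2))$; the second sets $\epsilon_{\mathrm{samp}}=O(\lambda_2\epsilon)$, which when plugged into the sample-complexity bound of Corollary \ref{main-text-outer-loop-complexity} produces the announced $M$; the third requires $T=\Omega((\lambda_2\epsilon)^{-1/J})$ up to the polynomial factor $(10\lambda_1+\lambda_2+32)^{1/J}$. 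Taking the maximum of the two $T$ requirements gives the stated outer-loop complexity. For the $W_2$ bound I observe that the factor $4/\min\{\alpha_{p_T^*},\alpha_{q_T^*}\}$ appearing in the $W_2$ bound of Corollary \ref{main-text-outer-loop-complexity} — which by Lemma \ref{log-sobolev-exp-bounded} is at most $4\lambda_2\lambda_1^{-1}\exp(12/\lambda_2)$ — together with the extra $3T^{-2J}$ transportation residual force me to drive the KL-side terms to accuracy $\Theta(\tilde g(\lambda_1,\lambda_2)\epsilon)$; repeating the preceding balancing with $\epsilon$ replaced by $\tilde g(\lambda_1,\lambda_2)\epsilon$ delivers the second pair of bounds.

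To realize the per-step inner accuracy $\max\{\delta_{t,1},\delta_{t,2}\}\le\frac{1}{2}\min\{c(p_t^*),c(q_t^*)\}T^{-2J}$, I apply Theorem \ref{main-text-thm-ULA} to each sub-problem. By Lemma \ref{log-sobolev-exp-bounded} the targets $p_t^*,q_t^*$ satisfy the log-Sobolev inequality with constant at least $\frac{\lambda_1}{\lambda_2}\exp(-12/\lambda_2)$; the recursive definitions \eqref{h_def}, \eqref{g_def} together with the weight bound $\tau_t/(\lambda_2+\tau_t)<1$ give, by induction on $t$, that $\nabla g^{(t)}$ and $\nabla h^{(t)}$ have Lipschitz constants of order $(L_1+\lambda_1)/\lambda_2$. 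Substituting these into the ULA rate and balancing the decay term $e^{-\alpha\iota k}\cdot\mathrm{KL}(\rho_0\|p)\le\delta/2$ against the bias term $8\iota d L_1^2/\alpha\le\delta/2$ produces the announced $\iota$ and $T_t$, where the $\exp(30/\lambda_2)$ factor emerges as the product of the $\exp(12/\lambda_2)$ from $\alpha^{-1}$ with the additional exponential factor that appears in $c(p_t^*)^{-1}$ and in the $\tilde g(\lambda_1,\lambda_2)\epsilon$ rescaling for the $W_2$ version.

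The hard part will be keeping the $\lambda_1,\lambda_2$ dependencies book-kept through three coupled constants — the log-Sobolev lower bound that degrades like $\exp(-12/\lambda_2)$, the Lipschitz gradient of the potentials which inflates like $\lambda_2^{-1}$, and the second-moment factor $\sigma(p_t^*)$ entering $c(p_t^*)^{-1}$ — so that none of them enters the final rates beyond the declared $\exp(30/\lambda_2)$ and polynomial $\lambda_1^{\pm 1},\lambda_2^{\pm 1}$ factors. A secondary subtlety is bounding $\sigma(p_t^*)$ uniformly in $t$ using only the quadratic component $\lambda_1\|x\|^2$ of the regularizer $\cR$ in \eqref{eq:relative_reg}, which is needed to make the bound on $c(p_t^*)^{-1}$ along the trajectory independent of the iteration index.
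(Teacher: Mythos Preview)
Your proposal is correct and follows essentially the same approach as the paper: the paper does not write out a separate proof for this corollary, but the implied argument (apply the pattern of the proof of Theorem \ref{main-text-global-convergence} to the KL and $W_2$ inequalities \eqref{KL-divergence-convergence} and \eqref{Wassenstain-convergence} of Corollary \ref{main-text-outer-loop-complexity}, rescaling the target accuracy by the extra $\lambda_2^{-1}$ and $\tilde g(\lambda_1,\lambda_2)^{-1}$ factors) is precisely what you describe. Your identification of the three constants to track---the LSI lower bound $\frac{\lambda_1}{\lambda_2}e^{-12/\lambda_2}$ from Lemma \ref{log-sobolev-exp-bounded}, the gradient Lipschitz constant of $g^{(t)},h^{(t)}$, and the uniform second-moment bound on $\sigma(p_t^*),\sigma(q_t^*)$ via the inequality \eqref{estimation-square}---matches the paper's bookkeeping exactly.
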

\rev{Inspired by MALA-based sampling algorithms \citep{chewi2021optimal, wu2022minimax, altschuler2024faster}, we also provide the global convergence of PAPAL using proximal sampler with MALA (refer to Algorithm \ref{alg:proximal}) for inner loop.
\begin{corollary}
    \label{remark-MALA}
    Let $\epsilon$ be the desired accuracy and set the parameters $\gamma_t,\, \eta_t,\, \tau_t,\, \mu_t$ as in Eq.~\eqref{parameter-setting}. Assuming Assumptions \ref{ass:a1}, \ref{ass:a2} and \ref{ass:a4}, executing proximal sampler with MALA (Algorithm \ref{alg:proximal}) for $T_t=\tilde{\mathcal{O}}\left((3L_1+\lambda_1)\exp(12/\lambda_2)\lambda_1^{-1}d^{1/2}\left(\log(\epsilon^{-1})+1+\lambda_2^{-1}\right)^4\right)$ iterations on the inner loop yeilds an $\epsilon$-mixed MNE: $\max\limits_{w\in\cP_2(\cX)\times\cP_2(\cY)}Q(\Bar{w},w)\leq\epsilon$ with probability at least $1-\delta$ when running Algorithm \ref{alg:algorithm} for iterations given by
    $$T=\mathcal{O}\left(\max\left\{\log\left((1+\lambda_2^{-1})\epsilon^{-1}\right)g^{-1}(\lambda_2),(32+10\lambda_1+\lambda_2)\epsilon^{-1/J}\right\}\right),$$ 
    and sample size $M=\mathcal{O}\left(\epsilon^{-2}\left[\log\left(T\delta^{-1} \right)+d\log\left(1+L_0r_{3}(\epsilon,\lambda_1,\lambda_2)\epsilon^{-1}\right)\right]\right)$
    on the outer loop, where $d = \max\{m,n\}$.
\end{corollary}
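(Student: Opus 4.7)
The plan is to reuse the entire architecture of Theorem \ref{main-text-global-convergence}, modifying only the inner-loop sampling analysis. Observe first that the outer-loop convergence bound from Corollary \ref{main-text-outer-loop-complexity}, the choice of parameters $\gamma_t,\eta_t,\tau_t,\mu_t$ from Eq.~\eqref{parameter-setting}, and the finite-particle concentration (Lemma \ref{finite-particle-approximation}) are all agnostic to which Markov chain produces $(p_t,q_t)$ inside each sub-problem. Therefore, to get an $\epsilon$-MNE under the max-min gap criterion it suffices to guarantee the sub-problem accuracies $\max\{\mathrm{KL}(p_t\|p_t^*),\mathrm{KL}(q_t\|q_t^*)\}\leq \tfrac{1}{2}\min\{c(p_t^*),c(q_t^*)\}T^{-2J}$ at every outer step $t$, with the same outer $T$ and sample size $M$ stated in Theorem \ref{main-text-global-convergence}. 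The task therefore reduces to verifying that the proximal sampler with MALA (Algorithm \ref{alg:proximal}) reaches this inner accuracy within the claimed number of iterations $T_t$.

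Next, I would import the structural facts about $p_t^*,q_t^*$ that were already established for the ULA proof. By Lemma \ref{log-sobolev-exp-bounded} (Holley--Strook), both target densities satisfy LSI with constant at least $\alpha \geq \tfrac{\lambda_1}{\lambda_2}\exp(-12/\lambda_2)$, and their potentials have $\mathcal{O}(L_1+\lambda_1)$-Lipschitz gradients with an additional factor arising from the averaging inside $h^{(t)},g^{(t)}$; altogether the effective smoothness is at most $3L_1+\lambda_1$. I would then invoke the sharp complexity bound for the proximal sampler with MALA under LSI from \citep{chewi2021optimal,wu2022minimax,altschuler2024faster}, which improves the dimension dependence from $d$ (as seen for ULA in Theorem \ref{main-text-thm-ULA}) to $\sqrt{d}$, while the dependence on the target KL accuracy $\epsilon_{\mathrm{tgt}}$ becomes polylogarithmic rather than polynomial. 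Substituting $\epsilon_{\mathrm{tgt}}=\tfrac{1}{2}\min\{c(p_t^*),c(q_t^*)\}T^{-2J}$, using $c(\cdot)\gtrsim \alpha$ and $T^{-2J}\leq 1$, and collecting the logarithmic factors into $\log(\epsilon^{-1})+1+\lambda_2^{-1}$ (the $\lambda_2^{-1}$ coming from the LSI constant) to a small polynomial power (quartic as in \citep{altschuler2024faster}) yields exactly the claimed $T_t = \tilde{\mathcal{O}}\bigl((3L_1+\lambda_1)\exp(12/\lambda_2)\lambda_1^{-1}d^{1/2}(\log(\epsilon^{-1})+1+\lambda_2^{-1})^4\bigr)$.

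The main obstacle will be the careful bookkeeping of constants across the proximal sampler's two nested steps: the outer Gaussian smoothing step size $\eta_{\rm prox}$ must be tuned small enough that the restricted Gaussian oracle still inherits an LSI constant comparable to $\alpha$, yet large enough that the overall iteration complexity retains the $\sqrt{d}$ scaling; meanwhile the MALA subroutine that implements this oracle must be shown to mix in $\tilde{\mathcal{O}}(\sqrt{d})$ steps given the warm start supplied by the previous outer iterate. This is standard for the bounded-perturbation-of-Gaussian family produced by Eq.~\eqref{h_def}--\eqref{g_def} and is already handled in the cited references, so no new analytical machinery is required beyond combining their rates with the outer-loop bound of Corollary \ref{main-text-outer-loop-complexity} and the particle concentration of Lemma \ref{finite-particle-approximation}. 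The final $T$ and $M$ are unchanged from Theorem \ref{main-text-global-convergence}, since only the inner-loop cost per sub-problem has been re-evaluated.
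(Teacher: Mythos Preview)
Your proposal is correct and mirrors the paper's own proof almost exactly: the paper likewise leaves the outer-loop analysis (Corollary \ref{main-text-outer-loop-complexity}, Lemma \ref{finite-particle-approximation}, and the choice of $T,M$) untouched, records the Lipschitz-gradient and LSI constants of $h^{(t)},g^{(t)}$, and then invokes \citep[Theorem 5.3]{altschuler2024faster} to obtain the stated $T_t$. One small bookkeeping point: the potentials actually have $(3L_1+\lambda_1)\lambda_2^{-1}$-Lipschitz gradient (not $3L_1+\lambda_1$), but since the LSI constant also carries a $\lambda_2^{-1}$ factor these cancel in the condition number $L/\alpha$, so your final $T_t$ is unaffected.
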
}

\subsection{Convergence of Stochastic PAPAL}\label{Sto-PAPAL}
In this subsection, we consider the stochastic version of  Eq.\eqref{eq:problem_particle} as follows
\begin{align}\label{eq:sto-finite-particle-problem}
    \min_{p\in\mathcal{P}_2(\mathcal{X})} \max_{q\in\mathcal{P}_2(\mathcal{Y})} &\EE_{p(x)} \EE_{q(y)} [f(x,y):=\EE[G(x,y,\xi)]]+\cR(p) - \cR(q),
\end{align}
where $\cX=\RR^m, \cY=\RR^n$, and the stochastic component $G(x,y,\xi)$ is indexed by some random vector $\xi$ whose probability distribution $\rho$ is supported on $\Lambda\subset\RR^d$ and $G:\cX\times\Lambda\rightarrow\RR$. We assume that the expectation 
	\begin{align}
		\EE[G(x,y,\xi)]=\int_{\Lambda}G(x,y,\xi)d\rho(\xi),
	\end{align}
	is well defined and finite valued for every $(x,y)\in\cX\times\cY$. To prevent ambiguity, we will continue to use $\cL(p,q)$ to denote $\EE_{p(x)} \EE_{q(y)} [f(x,y)]+\cR(p) - \cR(q)$.

For stochastic version, we have additional assumptions,
\begin{enumerate}[label={\textbf{[A$_\text{\arabic*}$]}}]
  \setcounter{enumi}{3}
  \item \label{Sto-Assumptions1} There exists $L_0$ such that $G(\cdot,\cdot,\xi)$ is $L_0$-Lipschitz for any $\xi\in\Lambda$.
			\item \label{Sto-Assumptions2}  For any $\xi\in\Lambda$, $G(\cdot,\cdot,\xi)$ has $L_1-$Lipschitz gradient.
			\item \label{Sto-Assumptions3}  For any $x,y\in\cX\times\cY$ and $\xi\in\Lambda$, $|G(x,y,\xi)|\leq1.$
\end{enumerate}
To accommodate the problem setting Eq.~\eqref{eq:sto-finite-particle-problem}, we need to make some straightforward modifications to the deterministic PAPAL algorithm: we replace function $\hat{\phi}_t$ with $(\hat{\phi}_t)_{\Bar{\xi}_N}:=-(1+\mu_t)\EE_{\hat{p}_{t-1}}\left[\frac{1}{N}\sum_{i=1}^NG(\cdot,y,\xi_{t,i})\right]+\mu_t\EE_{\hat{p}_{t-2}}\left[\frac{1}{N}\sum_{i=1}^NG(\cdot,y,\xi_{t,i})\right]$, and $\hat{\psi}_t$ with $(\hat{\psi}_t)_{\Bar{\xi}_N}:=\EE_{\hat{q}_{t}}\left[\frac{1}{N}\sum_{i=1}^NG(\cdot,y,\xi_{t,i})\right]$ in each iteration $t$ of the Algorithm \ref{alg:algorithm}, where $(\xi_{t,1},\cdots,\xi_{t,N})$ denote $N$ i.i.d random variables, depending on $t$-th update with $\xi_{t,i}=\xi$ for any $i\in[1:N]$.

\subsubsection{Outer Loop Error Bound}
In order to simplify the notation of our formula, we also assume that the KL divergence between $(q_t,p_t)$ and $(q^*_t,p^*_t)$ can be bounded by small errors $(\delta_{t,1}, \delta_{t,2})$ at each step-$t$. As an extension of Corollary \ref{main-text-outer-loop-complexity} in the stochastic version, the following lemma provides the high-probability global convergence of the stochastic PAPAL algorithm under the condition of sufficiently small inner loop error.
\begin{lemma}\label{Stochastic-Outer-Loop-Lemma}
        Setting the parameters $\gamma_t,\, \eta_t,\, \tau_t,\, \mu_t$ as in Eq.~\eqref{parameter-setting} and assuming the sub-problem error bounds satisfy $\max\{\delta_{t,1},\delta_{t,2}\}\leq\frac{1}{2}\min\{c(p_t^*), c(q_t^*)\}T^{-2J}$ for any fixed positive integer $J$ and $\epsilon>0$ at each step-$t$. If Assumptions \ref{Sto-Assumptions1}, \ref{Sto-Assumptions2} and \ref{Sto-Assumptions3} hold, then the max-min gap between $\Bar{w}_T$ and $\bar{w}_T^*:=\argmax\limits_{w\in\mathcal{P}_2(\mathcal{X})\times\mathcal{P}_2(\mathcal{Y})}Q(\bar{w}_T,w)$ satisfies
    \begin{align}\label{Max-Min-gap}
        0\leq Q(\Bar{w}_T,\bar{w}_T^*)\leq&\frac{\lambda_2\mu^T}{1-\mu}\left[\mathrm{KL}(\bar{p}_T^*\|p_0)+\mathrm{KL}(\bar{q}_T^*\|q_0)\right]
        +12\epsilon+(10\lambda_1+\lambda_2+32)T^{-J}.
    \end{align}
    with probability at least $1-{\delta}$ for sample size $$M=N=\mathcal{O}\left(\epsilon^{-2}\left[\log(2T\delta^{-1})+d\log\left(1+L_0r_{3}(\epsilon,\lambda_1,\lambda_2)\epsilon^{-1}\right)\right]\right),$$ in each iteration where $d=\max\{m,n\}$ , $\Bar{w}_T=\frac{\sum_{t=1}^T\gamma_t w_t}{\sum_{t=1}^T\gamma_t}$. In addition, our high probability convergence also holds under the KL-divergence
    \begin{align}\label{KL-divergence-convergence-sto}
        \mathrm{KL}(p_*\|p_T^*)+\frac{1}{2}\mathrm{KL}(q_*\|q_T^*)
        \leq&\mu^T[\mathrm{KL}(q_*\|q_0^*)+\mathrm{KL}(p_*\|p_0^*)]
        \notag
        \\
        &+\lambda_2^{-1}\left[12\epsilon+(10\lambda_1+\lambda_2+32)T^{-J}\right],
    \end{align}
    and the $W_2$ distance as follows
    \begin{equation*}
    \label{Wassenstain-convergence-sto}
    \begin{split}
        \frac{1}{2}W_2^2(q_T, q_*)+W_2^2(p_T, p_*)\leq&\frac{4}{\min\{\alpha_{p_T^*}, \alpha_{q_T^*}\}}\left\{\mu^{T}\left[\mathrm{KL}(q_*\|q_0^*)+\mathrm{KL}(p_*\|p_0^*)\right]\right.
        \notag
        \\
        &+\left.\lambda_2^{-1}\left[12\epsilon+(10\lambda_1+\lambda_2+32)T^{-J}\right]\right\}+3T^{-2J},
    \end{split}
    \end{equation*}
    with respect to the MNE $w_*=(p_*,q_*)$ of Eq.~\eqref{eq:sto-finite-particle-problem}.
\end{lemma}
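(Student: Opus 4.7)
The plan is to reduce the stochastic case to the already-established deterministic outer-loop argument (Corollary \ref{main-text-outer-loop-complexity}) by isolating the extra error introduced by replacing $f$ with the empirical average $\tilde{f}_t(x,y) := \frac{1}{N}\sum_{i=1}^N G(x,y,\xi_{t,i})$. The key observation is that $(\hat{\phi}_t)_{\Bar{\xi}_N}$ and $(\hat{\psi}_t)_{\Bar{\xi}_N}$ have exactly the same structural form as $\hat{\phi}_t,\hat{\psi}_t$ with $f$ replaced by $\tilde{f}_t$. Therefore, when tracking the outer loop inequality of Theorem \ref{main-text-outer-loop-thm} with the stochastic quantities plugged in, the only new terms that appear at step $t$ are of the form $\EE_{p}\EE_{q}[\tilde{f}_t(x,y) - f(x,y)]$ against densities $p\in\cQ_{\cX}(1)$ and $q\in\cQ_{\cY}(3)$ arising in the analysis.

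First I would bound this additional linear-functional error by a concentration argument parallel to Lemma \ref{finite-particle-approximation}. Under Assumptions \ref{Sto-Assumptions1}, \ref{Sto-Assumptions2}, \ref{Sto-Assumptions3}, each $G(\cdot,\cdot,\xi_{t,i})$ is bounded in $[-1,1]$ and $L_0$-Lipschitz. For any fixed $(x,y)$, Hoeffding gives $|\tilde{f}_t(x,y)-f(x,y)|\leq \epsilon$ with probability at least $1-2\exp(-N\epsilon^2/2)$; uniformizing over a covering net of an effective ball of radius $r_3(\epsilon,\lambda_1,\lambda_2)$ (the high-probability support of densities in $\cQ_{\cX}(1)\times\cQ_{\cY}(3)$, controlled by the quadratic part of the regularizer), and then exploiting the $L_0$-Lipschitz property of $G(\cdot,\cdot,\xi)$ to transfer pointwise control to expectations under such densities, yields the same sample complexity $N=\mathcal{O}(\epsilon^{-2}[\log(T/\delta)+d\log(1+L_0r_3(\epsilon,\lambda_1,\lambda_2)\epsilon^{-1})])$ as in the deterministic particle concentration, uniformly over all $t\leq T$.

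Next I would combine the two concentration events, the particle approximation error from Lemma \ref{finite-particle-approximation} and the new $\xi$-sampling error, by a union bound over the $T$ outer iterations and the two sources, which absorbs the replacement of $\log(T/\delta)$ with $\log(2T/\delta)$ in the stated sample size. Inserting the bound into the outer-loop telescoping inequality, the particle-based term contributes the same $8\epsilon$ as in Corollary \ref{main-text-outer-loop-complexity}, while the extra $\xi$-sampling error contributes an additional $4\epsilon$ (the factor $4$ again coming from $\sum_{t=1}^T\gamma_t(1+\mu_t)/\sum_{t=1}^T\gamma_t\leq 4$ with $\mu_t=\mu\leq 1$), which produces the $12\epsilon$ in Eq.~\eqref{Max-Min-gap}. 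The KL and $W_2$ consequences in Eq.~\eqref{KL-divergence-convergence-sto} and the $W_2$ bound then follow from Eq.~\eqref{max-min-gap-control-entropy} and Talagrand's inequality via the log-Sobolev constant of $p_T^*,q_T^*$, exactly as in the deterministic proof.

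The main obstacle I anticipate is making the uniform control of $\tilde{f}_t - f$ match the same sample size as the particle concentration, rather than a worse one. Point-wise concentration is easy, but to pass from pointwise to expectations against $p\in\cQ_{\cX}(1),q\in\cQ_{\cY}(3)$ one must ensure the effective domain is a ball whose radius depends only mildly on $\epsilon,\lambda_1,\lambda_2$. This is precisely what the Gaussian prior part $\frac{\lambda_1}{\lambda_2}\|x\|^2$ in the regularizer buys us, since it forces any density in $\cQ_{\cX}(C)$ to have sub-Gaussian tails with explicit constants; this is what powers the bound $r_3(\epsilon,\lambda_1,\lambda_2)=8\sqrt{(\lambda_2+3)/\lambda_1\cdot\max\{\log(\epsilon^{-1}),d\}}$ used in Lemma \ref{finite-particle-approximation}, and the same estimate applies verbatim here. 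Once this uniformization is in place, the rest of the argument is a direct lift of the deterministic proof.
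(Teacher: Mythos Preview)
Your proposal is correct and follows essentially the same route as the paper. The paper formalizes it through an intermediate lemma (Lemma \ref{stocastic-out-layer-thm}) which reruns the deterministic outer-loop argument with the stochastic surrogates $(\hat{\phi}_t)_{\bar{\xi}_N},(\hat{\psi}_t)_{\bar{\xi}_N}$ in place and isolates four explicit $\rmStoErr$ terms per step (evaluated at $q_t^*$, $q$, $p_t^*$, $p$); these are then bounded by $\epsilon$ each via the same Hoeffding-plus-covering argument as Lemma \ref{finite-particle-approximation}, giving the additional $4\epsilon$ and the $\log(2T/\delta)$ in the sample size after a union bound. One minor bookkeeping correction: your stated reason for the extra factor $4$ is off, since $\sum_t\gamma_t(1+\mu_t)/\sum_t\gamma_t\leq 2$, not $4$; the $4\epsilon$ actually comes from the fact that four separate stochastic-error terms appear at each step, one for each of $q_t^*,q,p_t^*,p$.
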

For the inner loop, the stochastic version is equivalent to the deterministic one (refer \ref{subsection-5.2}). Thus, we omit the discussion of the inner loop error bound for Stochastic PAPAL.

\subsubsection{Global Convergence}
\begin{theorem}\label{sto-main-text-global-convergence}
    Let $\epsilon$ be the desired accuracy. Under Assumptions \ref{Sto-Assumptions1},  \ref{Sto-Assumptions2} and \ref{Sto-Assumptions3}, if we run Langevin algorithm with step size $\iota=\mathcal{O}\left(\min\left\{1,\frac{\lambda_1^2}{\lambda_2^2}\right\}\frac{\lambda_1\lambda_2\epsilon^2}{d^2(L_1+\lambda_1)^2\exp{(30/\lambda_2)}}\right)$ for $T_t=\mathcal{O}\left(\iota^{-1}\lambda_1^{-1}[(\lambda_2+1)\exp{(30/\lambda_2)}\right.$ $\left.\log(\epsilon^{-1})]\right)$ iterations on the inner loop, then we can achieve an $\epsilon$-mixed Nash equilibrium: $\max\limits_{w\in\cP_2(\cX)\times\cP_2(\cY)}Q(\Bar{w},w)\leq\epsilon$ with probability at least $1-\delta$ when running Algorithm \ref{alg:algorithm} for iterations
    $$T=\mathcal{O}\left(\max\left\{\log\left((1+\lambda_2^{-1})\epsilon^{-1}\right)g^{-1}(\lambda_2),(32+10\lambda_1+\lambda_2)\epsilon^{-1/J}\right\}\right),$$ 
    and sample size
    $$M=N=\mathcal{O}\left(\epsilon^{-2}\left[\log\left(2T\delta^{-1} \right)+d\log\left(1+L_0r_{3}(\epsilon,\lambda_1,\lambda_2)\epsilon^{-1}\right)\right]\right),$$
    on the outer loop, where $d = \max\{m,n\}$ .
\end{theorem}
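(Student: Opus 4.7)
The plan is to combine three ingredients that have already been established: the high-probability outer-loop error bound for stochastic PAPAL (Lemma \ref{Stochastic-Outer-Loop-Lemma}), the Holley--Stroock-type estimate of the log-Sobolev constants for the per-step Gibbs targets (Lemma \ref{log-sobolev-exp-bounded}), and the KL convergence of ULA (Theorem \ref{main-text-thm-ULA}). Structurally the argument mirrors the deterministic global convergence (Theorem \ref{main-text-global-convergence}); the only substantive change is that the inner-loop target now depends on the extra randomness coming from the $N$ i.i.d.\ samples $\{\xi_{t,i}\}_{i=1}^{N}$, which must be absorbed into both the log-Sobolev constants and the union bound.

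First I would verify that the per-iterate Gibbs potentials $h^{(t)}$ and $g^{(t)}$ in the stochastic version preserve the ``quadratic plus bounded'' decomposition. Assumption \ref{Sto-Assumptions3} gives $|G(\cdot,\cdot,\xi)|\le 1$, so the empirical averages $(\hat{\phi}_t)_{\bar{\xi}_N}$ and $(\hat{\psi}_t)_{\bar{\xi}_N}$ are bounded in $L_\infty$ by $1+2\mu_t\le 3$. Propagating through the forward recursions in Eq.~\eqref{h_def} and Eq.~\eqref{g_def}, the potentials remain the sum of $\frac{\lambda_1}{\lambda_2}\|\cdot\|^{2}$ and a bounded function whose $L_\infty$-norm is controlled by a constant independent of $t$. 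Hence Lemma \ref{log-sobolev-exp-bounded} delivers the same log-Sobolev constants as in the deterministic analysis, namely $\frac{\lambda_1}{\lambda_2}\exp(-12/\lambda_2)$ for $q_t^{*}$ and $\frac{\lambda_1}{\lambda_2}\exp(-4/\lambda_2)$ for $p_t^{*}$, and the gradient Lipschitz constant of $h^{(t)},g^{(t)}$ is still $\mathcal{O}(L_1+\lambda_1)$ by Assumption \ref{Sto-Assumptions2}.

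Next I would translate the per-step accuracy requirement from Lemma \ref{Stochastic-Outer-Loop-Lemma}, namely $\max\{\delta_{t,1},\delta_{t,2}\}\le\frac{1}{2}\min\{c(p_t^{*}),c(q_t^{*})\}T^{-2J}$, into the stated choices of $\iota$ and $T_t$. Applying Theorem \ref{main-text-thm-ULA} in the form $\mathrm{KL}(\rho_k\|p)\le e^{-\alpha\iota k}\mathrm{KL}(\rho_0\|p)+8\iota d L_1^{2}/\alpha$, I would pick $\iota$ small enough that the discretization bias $8\iota d L_1^{2}/\alpha$ is at most half of the target tolerance and $T_t$ large enough that the exponentially decaying initial-KL term accounts for the other half; plugging in the $\alpha$ estimates above reproduces exactly the announced dependencies $\iota = \mathcal{O}(\min\{1,\lambda_1^{2}/\lambda_2^{2}\}\lambda_1\lambda_2\epsilon^{2}d^{-2}(L_1+\lambda_1)^{-2}\exp(-30/\lambda_2))$ and $T_t = \mathcal{O}(\iota^{-1}\lambda_1^{-1}(\lambda_2+1)\exp(30/\lambda_2)\log(\epsilon^{-1}))$.

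Finally I would invoke Lemma \ref{Stochastic-Outer-Loop-Lemma} with sample sizes $M=N=\mathcal{O}(\epsilon^{-2}[\log(2T/\delta)+d\log(1+L_0 r_3(\epsilon,\lambda_1,\lambda_2)\epsilon^{-1})])$; its right-hand side is $\frac{\lambda_2\mu^{T}}{1-\mu}[\mathrm{KL}(\bar{p}_T^{*}\|p_0)+\mathrm{KL}(\bar{q}_T^{*}\|q_0)]+12\epsilon+(10\lambda_1+\lambda_2+32)T^{-J}$. Balancing the two non-$\epsilon$ terms against $\epsilon$ fixes $T$ as the maximum of the iterations needed to drive the geometric-decay contribution below $\epsilon$ (using the relation $\log(1/\mu)\asymp g(\lambda_2)$ that was already established in deriving Corollary \ref{main-text-outer-loop-complexity}) and the iterations needed to guarantee $(10\lambda_1+\lambda_2+32)T^{-J}\le\epsilon$; this yields exactly the stated bound on $T$. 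I expect the main subtlety beyond bookkeeping to be the simultaneous handling of the two independent sources of stochasticity, the $M$ particles and the $N$ gradient samples: they enter a single union bound inside Lemma \ref{Stochastic-Outer-Loop-Lemma} (doubling the $\log(T/\delta)$ compared with the deterministic case), and the ULA discretization-bias analysis must hold conditionally on the $\xi$-draws so that the log-Sobolev constant one uses is the worst-case value uniformly across realisations.
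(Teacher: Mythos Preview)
Your proposal is correct and follows essentially the same approach as the paper: the paper's proof of Theorem \ref{sto-main-text-global-convergence} is simply the one-line remark that applying the inner-loop ULA error bound (via the Holley--Stroock log-Sobolev estimates for $p_t^*,q_t^*$ from Lemma \ref{estimation-log} and Lemma \ref{log-sobolev-exp-bounded}, exactly as in the deterministic proof of Theorem \ref{main-text-global-convergence}) to Lemma \ref{Stochastic-Outer-Loop-Lemma} yields the result. You have correctly identified the one genuine change relative to the deterministic case, namely the additional union bound over the $N$ i.i.d.\ $\xi$-samples that doubles the $\log(T/\delta)$ term, and your observation that the ULA analysis is performed conditionally on the realised $\{\xi_{t,i}\}$ (so that Assumption \ref{Sto-Assumptions3} delivers the same uniform bounds on $(\hat\phi_t)_{\bar\xi_N},(\hat\psi_t)_{\bar\xi_N}$ and hence the same log-Sobolev constants) is exactly the point the paper relies on implicitly.
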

Similar to the conclusion of Corollary \ref{col:kl_conv} in the deterministic setting, the stochastic PAPAL algorithm also exhibits global convergence properties with respect to the KL divergence and $W_2$ distance of MNE $w_*=(p_*,q_*)$.
\begin{corollary}\label{Sto-col:kl_conv}
    Let $\epsilon$ be the desired accuracy. Under Assumptions \ref{Sto-Assumptions1}, \ref{Sto-Assumptions2} and  \ref{Sto-Assumptions3}, if we execute ULA with a step size of $\iota=\mathcal{O}\left(\min\left\{\frac{\lambda_1}{\lambda_2},\frac{\lambda_1^3}{\lambda_2^3}\right\}\frac{\lambda_2^4\epsilon^2}{(\max\{m,n\})^2(L_1+\lambda_1)^2\exp{(30/\lambda_2)}}\right)$ for $T_t=\mathcal{O}\left(\iota^{-1}\lambda_1^{-1}[(\lambda_2+1)\exp{(30/\lambda_2)}\log(\lambda_2^{-1}\epsilon^{-1})]\right)$ iterations on the inner loop, we can approximate an $\epsilon$-accurate KL divergence: $\mathrm{KL}(p_*\|p_T^*)+\frac{1}{2}\mathrm{KL}(q_*\|q_T^*)\leq \epsilon$ with probability no less than $1-\delta$ 
    when running Algorithm \ref{alg:algorithm} for iterations $T$
    and sample size $M,N$ satisfy
    $$T=\mathcal{O}\left(\max\left\{\log\left((\lambda_2\epsilon)^{-1}\right)g^{-1}(\lambda_2),\, \, (32+10\lambda_1+\lambda_2)(\lambda_2\epsilon)^{-1/J}\right\}\right),$$ 
    $$M=N= \mathcal{O}\left(\lambda_2^{-2}\epsilon^{-2}\left[\log(T\delta^{-1})+d\log\left(1+L_0r_3(\lambda_2\epsilon,\lambda_1,\lambda_2)\right)\right]\right),$$
    on the outer loop with respect to the MNE $w_*=(p_*,q_*)$ of \eqref{eq:problem_particle}, where $d = \max\{m,n\}$. Furthermore, noting that $p_T$ and $q_T$ satisfying $\max\{\mathrm{KL}(p_T\|p_T^*),\, \mathrm{KL}(q_T\|q_T^*)\}\leq\mathcal{O}(\lambda_2^2\epsilon^2)$, we can also approximate an $\epsilon$-accurate $W_2$ distance: $W_2^2(p_T,p_*)+\frac{1}{2}W_2^2(q_T,q_*)\leq\epsilon$ with similar high probability when running Algorithm \ref{alg:algorithm} for iterations $T$
    and sample size $M,N$ satisfy
    $$T=\mathcal{O}\left(\max\left\{\log\left((\Tilde{g}(\lambda_1,\lambda_2)\epsilon)^{-1}\right)g^{-1}(\lambda_2),\, \, (32+10\lambda_1+\lambda_2)(\Tilde{g}(\lambda_1,\lambda_2)\epsilon)^{-1/J}\right\}\right),$$ 
    $$M=N= \mathcal{O}\left(\Tilde{g}^{-2}(\lambda_1,\lambda_2)\epsilon^{-2}\left[\log(T\delta^{-1})+d\log\left(1+L_0r_3(\Tilde{g}(\lambda_1,\lambda_2)\epsilon,\lambda_1,\lambda_2)\right)\right]\right),$$
    on the outer loop where $\Tilde{g}(\lambda_1,\lambda_2):=\min\left\{1,\lambda_2,\lambda_1(\lambda_2)^{-1}\exp\{-12(\lambda_2)^{-1}\}\right\}$.
\end{corollary}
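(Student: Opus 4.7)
The plan is to obtain Corollary \ref{Sto-col:kl_conv} by mirroring the reasoning used for the deterministic analogue (Corollary \ref{col:kl_conv}), with Lemma \ref{Stochastic-Outer-Loop-Lemma} substituted for Corollary \ref{main-text-outer-loop-complexity} wherever the outer-loop bound is invoked. The skeleton has three parts: (i) rescale the accuracy in Lemma \ref{Stochastic-Outer-Loop-Lemma} so that the KL (resp.\ $W_2$) target is reached; (ii) translate this into inner-loop step size and iteration budgets via Theorem \ref{main-text-thm-ULA} and the log-Sobolev estimates from Lemma \ref{log-sobolev-exp-bounded}; (iii) pass from KL to $W_2$ using Talagrand's transport inequality.

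For the KL claim, I would apply Eq.~\eqref{KL-divergence-convergence-sto} with the concentration tolerance $\epsilon$ there replaced by $\tilde{\epsilon}:=\lambda_2\epsilon/c$ for a small universal constant $c$; then each of the three summands $\mu^T[\cdots]$, $\lambda_2^{-1}\cdot 12\tilde{\epsilon}$ and $\lambda_2^{-1}(10\lambda_1+\lambda_2+32)T^{-J}$ is absorbed into $O(\epsilon)$ by imposing $T\gtrsim\log((\lambda_2\epsilon)^{-1})g^{-1}(\lambda_2)$ and $T\gtrsim(32+10\lambda_1+\lambda_2)(\lambda_2\epsilon)^{-1/J}$, which matches the stated $T$. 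The sample sizes $M=N$ are forced by the stochastic finite-sample bound in Lemma \ref{Stochastic-Outer-Loop-Lemma} with tolerance $\tilde{\epsilon}$, which is exactly the stated scaling. For the inner loop, the hypothesis $\max\{\delta_{t,1},\delta_{t,2}\}\le\tfrac12\min\{c(p_t^*),c(q_t^*)\}T^{-2J}$ must be verified by Theorem \ref{main-text-thm-ULA}; since $p_t^*,q_t^*$ admit LSI constants of order $(\lambda_1/\lambda_2)\exp(-O(1/\lambda_2))$ (from Lemma \ref{log-sobolev-exp-bounded} applied to the decomposition $\lambda_1\|\theta\|^2/\lambda_2+$ bounded), and $\nabla h^{(t)},\nabla g^{(t)}$ are $O(L_1+\lambda_1)$-Lipschitz, the step size $\iota\asymp\alpha/L_{\text{eff}}^2$ and iteration count $T_t\asymp\iota^{-1}\alpha^{-1}\log(\delta_t^{-1})$ prescribed in Theorem \ref{main-text-thm-ULA} yield exactly the stated $\iota$ and $T_t$ once $\alpha$ and $L_{\text{eff}}$ are substituted and $\delta_t^{-1}=O(\lambda_2^{-1}\epsilon^{-1})$.

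For the $W_2$ claim, I would combine two applications of Talagrand's inequality (implied by LSI): $W_2^2(p_T,p_T^*)\le 2\alpha_{p_T^*}^{-1}\mathrm{KL}(p_T\|p_T^*)$ and $W_2^2(p_*,p_T^*)\le 2\alpha_{p_T^*}^{-1}\mathrm{KL}(p_*\|p_T^*)$, then use $W_2(p_T,p_*)\le W_2(p_T,p_T^*)+W_2(p_T^*,p_*)$ (and analogously for $q$). Hence it suffices to make $\mathrm{KL}(p_*\|p_T^*)+\tfrac12\mathrm{KL}(q_*\|q_T^*)$ and $\max\{\mathrm{KL}(p_T\|p_T^*),\mathrm{KL}(q_T\|q_T^*)\}$ both of order $\alpha_{\min}\epsilon$, where $\alpha_{\min}=\min\{\alpha_{p_T^*},\alpha_{q_T^*}\}\asymp(\lambda_1/\lambda_2)\exp(-O(1/\lambda_2))$. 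The rescaled target is precisely $\tilde g(\lambda_1,\lambda_2)\epsilon$, and repeating the first two parts with $\epsilon$ replaced by $\tilde g(\lambda_1,\lambda_2)\epsilon$ produces the stated $T$, $M$, and $N$; the inner-loop budget for $\max\{\mathrm{KL}(p_T\|p_T^*),\mathrm{KL}(q_T\|q_T^*)\}\le O(\lambda_2^2\epsilon^2)$ is the same expression with the tighter tolerance folded into $\log(\cdot)$ factors.

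The main obstacle is bookkeeping: making sure the rescaling $\epsilon\mapsto\lambda_2\epsilon$ (for KL) and $\epsilon\mapsto\tilde g(\lambda_1,\lambda_2)\epsilon$ (for $W_2$) propagates coherently through the step-size formula, the $T_t$ formula, and the concentration sample size simultaneously, while keeping the exponential factor $\exp(O(1/\lambda_2))$ from the Holley--Stroock bound confined to the places the theorem statement displays it. The stochastic component itself (the $\xi$-sampling via $N$) introduces no new analytic difficulty beyond a Hoeffding-type concentration that is already baked into the finite-sample bound of Lemma \ref{Stochastic-Outer-Loop-Lemma}; the key is to union-bound over the $T$ outer iterations, which produces the $\log(2T\delta^{-1})$ factor in $M=N$.
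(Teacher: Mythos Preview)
Your proposal is correct and follows essentially the same approach as the paper: the paper does not give a self-contained proof of Corollary \ref{Sto-col:kl_conv} but simply remarks that it follows by combining Lemma \ref{Stochastic-Outer-Loop-Lemma} with the inner-loop ULA analysis exactly as in the deterministic Corollary \ref{col:kl_conv}, and your three-part outline (rescale accuracy, feed the log-Sobolev constants from Lemma \ref{log-sobolev-exp-bounded} into Theorem \ref{main-text-thm-ULA}, pass from KL to $W_2$ via Talagrand plus the triangle-type inequality $W_2^2(\nu,\rho)\le 2W_2^2(\nu,p)+2W_2^2(p,\rho)$) matches the paper's derivation of Corollary \ref{main-text-outer-loop-complexity} and Theorem \ref{main-text-global-convergence} almost verbatim. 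One small bookkeeping point: the Lipschitz constants of $\nabla h^{(t)},\nabla g^{(t)}$ carry an extra $\lambda_2^{-1}$ factor (they are $(3L_1+\lambda_1)/\lambda_2$ and $(L_1+\lambda_1)/\lambda_2$, not just $O(L_1+\lambda_1)$), which is what ultimately produces the displayed powers of $\lambda_2$ in $\iota$.
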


\rev{
 Although stochastic PAPAL needs large batch size $\Theta(\epsilon^{-2})$ for the outer loop, the inner loop can also be implemented with stochastic algorithms, such as some SGLD-based algorithms \citep{freund2022convergence,das2023utilising,huang2024faster}.
According to the result of \citet{das2023utilising}, we can extend our algorithm with batch size $1$. We begin the discussion with the following assumption:
\begin{assumption}[Sub-Gaussian Stochastic Gradient Growth]\label{ssgg}
    For $L_0>0$, the function $G(x,y,\xi)$  satisfies the following norm-subgaussianity condition for the stochastic gradient
    \begin{align}
        \mathbb{P}\left\{\|\nabla G(x,y,\xi)-\nabla f(x,y)\|\geq t\left| x,y\right.\right\}\leq 2\exp\left\{-t^2/(8L_0^2)\right\}, \forall (x,y)\in\cX\times\cY.\notag
    \end{align}
\end{assumption}
Therefore, applying \citet[Theorem 5]{das2023utilising}, we have following estimation of sub-problem
\begin{align}
    \max\left\{\mathrm{KL}(p_t||p_t^*),\mathrm{KL}(q_t||q_t^*)\right\}\leq&\exp\{-\lambda\iota T_t/2\}\max\left\{\mathrm{KL}(p_0||p_t^*),\mathrm{KL}(q_0||q_t^*)\right\}\notag
    \\
    &+\cO\left(\frac{(3L_1+\lambda_1)^2\iota d}{\lambda\lambda_2^2}+\frac{M\iota (3L_1+\lambda_1)L_0^2}{\lambda\lambda_2^3 N}+\frac{M\iota L_0^4}{\lambda \lambda_2^4 N^2}\right),\notag
\end{align}
using SGLD with arbitrary batch size $N\in\mathbb{N}_+$ in each iteration $t$ of PAPAL algorithm, where $M$ is the number of particles, $T_t$ is the number of iterations on the inner loop, $\iota$ is step size, $\lambda=\lambda_1\exp\{-12\lambda_2^{-1}\}\lambda_2^{-1}$ and $d=\max\{m,n\}$.
\begin{corollary}
    Let $\epsilon$ be the desired accuracy set the parameters $\gamma_t,\, \eta_t,\, \tau_t,\, \mu_t$ as default. Under Assumptions \ref{Sto-Assumptions1}, \ref{Sto-Assumptions2}, \ref{Sto-Assumptions3} and subgaussian stochastic gradient growth condition \ref{ssgg}, if we execute SGLD with batch size $N\in\mathbb{N}_+$ and a step size of 
    $$
    \iota=\cO\left(\frac{\lambda_1}{\lambda_2}\min\left\{\lambda_1^2,\lambda_2^{2}\right\}\min\left\{\frac{\lambda_2^{2}N^2}{L_0^4},\frac{1}{(3L_1+\lambda_1)^2}\right\}\frac{\exp(-30/\lambda_2)}{\max\{M,d\}}\right)
    $$ 
    for $T_t=\cO\left(\iota^{-1}\lambda_1^{-1}[(\lambda_2+1)\exp(30/\lambda_2)\log(\epsilon^{-1})]\right)$ iterations on the inner loop yeilds an $\epsilon-$mixed Nash equilibrium with probability at least $1-\delta$ when running PAPAL for iteration given by 
    $$T=\mathcal{O}\left(\max\left\{\log\left((1+\lambda_2^{-1})\epsilon^{-1}\right)g^{-1}(\lambda_2),(32+10\lambda_1+\lambda_2)\epsilon^{-1/J}\right\}\right),$$ 
    and sample size $M=\mathcal{O}\left(\epsilon^{-2}\left[\log\left(T\delta^{-1} \right)+d\log\left(1+L_0r_{3}(\epsilon,\lambda_1,\lambda_2)\epsilon^{-1}\right)\right]\right)$
    on the outer loop, where $d = \max\{m,n\}$.
\end{corollary}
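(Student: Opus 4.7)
The plan is to combine the stochastic outer-loop convergence of Lemma \ref{Stochastic-Outer-Loop-Lemma} with the SGLD inner-loop estimate of \citet{das2023utilising} that is displayed immediately above the statement. Lemma \ref{Stochastic-Outer-Loop-Lemma} already reduces the $\epsilon$-MNE guarantee to certifying, at every outer iteration $t$, that $\max\{\mathrm{KL}(p_t\|p_t^*),\mathrm{KL}(q_t\|q_t^*)\}\le \tfrac12\min\{c(p_t^*),c(q_t^*)\}T^{-2J}$ with high probability, and it pins down the outer-loop values of $T$ and $M$ independently of the inner solver. Hence the only thing I would need to produce is an SGLD-based certificate for this per-sub-problem tolerance, then take a union bound over the $T$ outer iterations.

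For the inner loop I would substitute the log-Sobolev constant $\lambda=\lambda_1\exp(-12/\lambda_2)/\lambda_2$ (established in Section \ref{subsection-5.2}) into the displayed SGLD bound, which decomposes into a geometric contraction term $\exp(-\lambda\iota T_t/2)\max\{\mathrm{KL}(p_0\|p_t^*),\mathrm{KL}(q_0\|q_t^*)\}$ and three bias terms scaling respectively as $\iota d(3L_1+\lambda_1)^2/(\lambda\lambda_2^2)$, $M\iota(3L_1+\lambda_1)L_0^2/(\lambda\lambda_2^3 N)$, and $M\iota L_0^4/(\lambda\lambda_2^4 N^2)$. The step size $\iota$ in the statement is engineered to drive all three bias terms simultaneously below the required $\cO(T^{-2J})$ tolerance: the factor $\exp(-30/\lambda_2)/\max\{M,d\}$ absorbs the $\lambda^{-1}$ prefactor together with the log-Sobolev re-scaling of $c(p_t^*),c(q_t^*)$, while the inner $\min\{\lambda_2^2 N^2/L_0^4,\,1/(3L_1+\lambda_1)^2\}$ selects whichever of the two $N$-dependent mechanisms or the $N$-independent Lipschitz term is currently binding.

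With $\iota$ pinned down, the contraction term is forced below the same threshold by choosing $T_t=\cO(\iota^{-1}\lambda^{-1}\log(\epsilon^{-1}))$. Inverting $\lambda$ yields $\lambda^{-1}=\lambda_2\exp(12/\lambda_2)/\lambda_1$, and an additional $\exp(18/\lambda_2)$ appears when converting from absolute KL to the $c(p_t^*),c(q_t^*)$-weighted tolerance of Lemma \ref{Stochastic-Outer-Loop-Lemma}; this matches the stated $T_t=\cO(\iota^{-1}\lambda_1^{-1}(\lambda_2+1)\exp(30/\lambda_2)\log(\epsilon^{-1}))$. The uniform boundedness of $\max\{\mathrm{KL}(p_0\|p_t^*),\mathrm{KL}(q_0\|q_t^*)\}$ under the Gaussian initialization (used in the same way as in the proofs of Theorems \ref{main-text-global-convergence} and \ref{sto-main-text-global-convergence}) lets this logarithmic factor be folded cleanly into $\log(\epsilon^{-1})$.

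The main obstacle is the heterogeneous three-term structure of the SGLD bias: one term depends on $d$ but not on $N$, so no amount of batching can suppress it, while the other two scale as $1/N$ and $1/N^2$. A single step size must tame all three simultaneously for arbitrary $N\in\NN_+$, which is precisely why the final $\iota$ ends up as a nested minimum over the three mechanisms rather than as a clean product; verifying that each of these three constraints is met by the prescription in the statement is the main calculation. Once this balance is fixed, plugging the inner certificate into Lemma \ref{Stochastic-Outer-Loop-Lemma} and performing a union bound over $t=1,\dots,T$ (which inflates the failure probability by $T$, matching the $\log(T\delta^{-1})$ already present in $M$) yields the corollary.
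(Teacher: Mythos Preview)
Your proposal is correct and follows the same approach as the paper, which does not spell out a separate proof for this corollary but treats it as the direct analogue of Theorem \ref{sto-main-text-global-convergence} with the ULA inner-loop bound (Theorem \ref{main-text-thm-ULA}) swapped for the displayed SGLD estimate of \citet{das2023utilising}. The log-Sobolev constants from Lemma \ref{estimation-log}, the tolerance $\tfrac12\min\{c(p_t^*),c(q_t^*)\}T^{-2J}$, and the outer-loop conclusion via Lemma \ref{Stochastic-Outer-Loop-Lemma} are exactly the ingredients the paper relies on, and your identification of the three-way step-size balancing as the main calculation is accurate.
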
}

\section{Conclusion}
We propose a particle-based algorithm -- PAPAL to solve min-max problems in continuous probability space. This algorithm explores the global convergence property for finding general mixed Nash equilibria in infinite-dimensional two-player zero-sum games. We provide a quantitative computation and sample complexity analysis of PAPAL with a suitable entropy regularization, which first shows the feasibility of an implementable particle-based algorithm in this task.




\vskip 0.2in
\bibliography{bib}

\appendix




\section{Auxiliary lemmas}

\begin{proposition}
\citep{bakry2006diffusions}
    Consider a probability density $p(\theta)\propto\exp{(-f(\theta))}$, where $f:\mathbb{R}^{d_\theta}\rightarrow\mathbb{R}$ is a smooth function. If there exists $\alpha>0$ such that $\nabla^2 f\succeq \alpha I_{d_\theta}$, then $p(\theta)$ satisfies log-Sobolev inequality with $\alpha$.
\end{proposition}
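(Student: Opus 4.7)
This is the classical Bakry--\'Emery criterion, so my plan is to follow the entropy-dissipation argument via the overdamped Langevin semigroup. First I would introduce the diffusion $dX_t = -\nabla f(X_t)\,dt + \sqrt{2}\,dW_t$, whose invariant measure is $p(\theta)$, and its Markov semigroup $P_t$ with infinitesimal generator $L = \Delta - \nabla f\cdot\nabla$. The carr\'e du champ is $\Gamma(g,g)=\|\nabla g\|^2$ and integration by parts against $p$ gives the Dirichlet form identity $\mathbb{E}_p[g(-Lg)]=\mathbb{E}_p[\|\nabla g\|^2]$.

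The heart of the argument is a computation of the iterated carr\'e du champ
\[
\Gamma_2(g,g) \;=\; \tfrac{1}{2}L\,\Gamma(g,g) \;-\; \Gamma(g, Lg) \;=\; \|\nabla^2 g\|_{\mathrm{HS}}^2 + \langle \nabla g,\,\nabla^2 f\,\nabla g\rangle,
\]
which, under the assumption $\nabla^2 f \succeq \alpha I_{d_\theta}$, yields the curvature-dimension bound $\Gamma_2(g,g) \ge \alpha\,\Gamma(g,g)$ (the CD$(\alpha,\infty)$ condition). From this I would derive the commutation estimate $\Gamma(P_t g, P_t g) \le e^{-2\alpha t} P_t\bigl(\Gamma(g,g)\bigr)$ by differentiating $s\mapsto P_s\bigl(\Gamma(P_{t-s}g,P_{t-s}g)\bigr)$ and using the $\Gamma_2$ lower bound.

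Next I would apply this commutation bound inside the entropy dissipation formula. For $h=g^2$ with $g$ smooth and integrable, set $\Phi(t)=\mathbb{E}_p[P_t h \log P_t h] - \mathbb{E}_p[h]\log\mathbb{E}_p[h]$. Differentiating and integrating by parts gives $\Phi'(t) = -\mathbb{E}_p\bigl[\Gamma(P_t h)/P_t h\bigr]$, and using the pointwise inequality $\Gamma(P_t h)/P_t h \le e^{-2\alpha t} P_t(\Gamma(h)/h)$ (a consequence of the commutation estimate combined with Cauchy--Schwarz) I obtain $-\Phi'(t)\le e^{-2\alpha t}\,\mathbb{E}_p[\Gamma(h)/h]$. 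Integrating from $0$ to $\infty$, together with $P_t h \to \mathbb{E}_p[h]$ so that $\Phi(\infty)=0$, yields
\[
\operatorname{Ent}_p(g^2) \;=\; \Phi(0) \;\le\; \frac{1}{2\alpha}\,\mathbb{E}_p\!\left[\frac{\Gamma(g^2)}{g^2}\right] \;=\; \frac{2}{\alpha}\,\mathbb{E}_p\bigl[\|\nabla g\|^2\bigr],
\]
which is exactly the log-Sobolev inequality with constant $\alpha$ as written in \eqref{LSI}.

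The main obstacle is the pointwise commutation inequality $\Gamma(P_t h)/P_t h \le e^{-2\alpha t} P_t(\Gamma(h)/h)$: it does not follow directly from $\Gamma(P_t g,P_t g)\le e^{-2\alpha t}P_t\Gamma(g,g)$ and requires a separate interpolation using the representation of $P_t$ along the $\Gamma_2$-hierarchy (essentially, writing $\Gamma(P_t h)/P_t h$ as a time-derivative of $\log P_s h$ and invoking a Jensen-type bound). The rest of the proof is regularity bookkeeping: one proves the inequality first for smooth compactly supported $g$ bounded away from $0$, then removes these restrictions by a truncation-and-monotone-convergence argument using the smoothness of $f$ and the bounded-second-moment assumption implicit in the setting of $\mathcal{P}_2(\mathbb{R}^{d_\theta})$.
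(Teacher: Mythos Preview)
The paper does not prove this proposition at all: it is stated as an auxiliary lemma with a citation to \cite{bakry2006diffusions} and no argument is given. So there is nothing in the paper to compare your proposal against.

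Your sketch is the standard Bakry--\'Emery semigroup proof and is essentially correct. One small comment: the pointwise bound $\Gamma(P_t h)/P_t h \le e^{-2\alpha t} P_t(\Gamma(h)/h)$ that you flag as the main obstacle is usually obtained by differentiating the interpolation $s\mapsto P_s\bigl(P_{t-s}h\,\log P_{t-s}h\bigr)$ rather than trying to deduce it from the gradient commutation $\Gamma(P_t g)\le e^{-2\alpha t}P_t\Gamma(g)$ plus Cauchy--Schwarz; working with that entropy interpolation directly makes the argument cleaner and avoids the Jensen-type step you mention. Either route is fine, and the regularity bookkeeping you outline (approximate by smooth $g$ bounded away from zero, then pass to limits) is the standard way to close.
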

 
\begin{definition}[Talagrand's inequality]\label{Talagrand's-inequality} 
    We say that a probability density $p(\theta)$ satisfies Talagrand's inequality \citep{talagrand1995concentration} with a constant $\alpha>0$ if for any probability density $p'(\theta)$,
    $
        \frac{\alpha}{2}W_2^2(p',p)\leq\mathrm{KL}(p'\|p),
    $
    where $W_2(p',p)$ denotes the 2-Wasserstein distance between $p'(\theta)$ and $p(\theta)$.
\end{definition}
\begin{lemma} \citep{otto2000generalization,bobkov2001hypercontractivity} If a probability density $p(\theta)$ satisfies the log-Sobolev inequality with a constant $\alpha>0$, then $p(\theta)$ satisfies Talagrand's inequality with the same $\alpha$.
\end{lemma}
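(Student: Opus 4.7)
My plan is to prove the implication LSI $\Rightarrow$ T$_2$ by following the Otto--Villani gradient-flow argument. Let $\mu$ be any probability density with $\mathrm{KL}(\mu\|p)<\infty$, and let $(\mu_t)_{t\ge 0}$ be the flow obtained by running the Langevin dynamics $\ud X_t = \nabla\log p(X_t)\,\ud t + \sqrt{2}\,\ud B_t$ with initial law $\mu_0=\mu$. The invariant measure is $p$, so heuristically $\mu_t \to p$ as $t\to\infty$, which suggests comparing $\mu$ to $p$ through the entire trajectory.

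The first step is to record the two calculus identities that drive the proof. Writing $I(\nu\|p):=\EE_\nu \|\nabla\log(\nu/p)\|^2$ for the relative Fisher information, the de~Bruijn identity along the Langevin semigroup gives
\begin{equation*}
\frac{\ud}{\ud t}\mathrm{KL}(\mu_t\|p) = -I(\mu_t\|p),
\end{equation*}
and the LSI \eqref{LSI} applied to $g=\sqrt{\mu_t/p}$ yields $\mathrm{KL}(\mu_t\|p)\le \tfrac{1}{2\alpha}I(\mu_t\|p)$. Gr\"onwall then implies $\mathrm{KL}(\mu_t\|p)\le e^{-2\alpha t}\mathrm{KL}(\mu\|p)$, so in particular $\mu_t\to p$ in KL and, by Pinsker plus tightness, weakly.

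The second step is to bound the Wasserstein distance traveled along the flow. The continuity equation $\partial_t \mu_t = \nabla\cdot(\mu_t\,\nabla\log(\mu_t/p))$ expresses the flow as a transport with velocity field $v_t = -\nabla\log(\mu_t/p)$, whose $L^2(\mu_t)$ norm is exactly $\sqrt{I(\mu_t\|p)}$. The Benamou--Brenier formula then gives $\tfrac{\ud}{\ud t}W_2(\mu_0,\mu_t)\le \|v_t\|_{L^2(\mu_t)}=\sqrt{I(\mu_t\|p)}$, hence
\begin{equation*}
W_2(\mu,p) \le \int_0^\infty \sqrt{I(\mu_t\|p)}\,\ud t.
\end{equation*}
To estimate this integral, I will use the algebraic manipulation
\begin{equation*}
\sqrt{I(\mu_t\|p)} = \frac{I(\mu_t\|p)}{\sqrt{I(\mu_t\|p)}} \le \frac{-\frac{\ud}{\ud t}\mathrm{KL}(\mu_t\|p)}{\sqrt{2\alpha\,\mathrm{KL}(\mu_t\|p)}} = -\sqrt{\tfrac{2}{\alpha}}\,\frac{\ud}{\ud t}\sqrt{\mathrm{KL}(\mu_t\|p)},
\end{equation*}
where the inequality uses LSI in the form $I\ge 2\alpha\,\mathrm{KL}$. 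Integrating from $0$ to $\infty$ and using $\mathrm{KL}(\mu_t\|p)\to 0$ yields $W_2(\mu,p)\le \sqrt{2\mathrm{KL}(\mu\|p)/\alpha}$, i.e.\ $\tfrac{\alpha}{2}W_2^2(\mu,p)\le \mathrm{KL}(\mu\|p)$, which is Talagrand's inequality.

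The main obstacle is regularity: all of the above calculus identities require some smoothness of $(\mu_t)$ and enough integrability to differentiate under the integral, to invoke Benamou--Brenier, and to pass the limit $t\to\infty$. I would handle this by first proving the result when $\mu$ has a smooth, compactly supported, and strictly positive density (so that the Langevin flow is classical and all integrals are finite), and then extending to a general $\mu\in\mathcal{P}_2$ by a standard density/mollification argument combined with lower semicontinuity of $W_2$ and of $\mathrm{KL}(\cdot\|p)$ under weak convergence. If one wishes to avoid PDE regularity issues entirely, an alternative is the Bobkov--Gentil--Ledoux route: LSI is equivalent to hypercontractivity of the Hamilton--Jacobi semigroup $Q_t f(x)=\inf_y\{f(y)+\tfrac{1}{2t}\|x-y\|^2\}$, and a Herbst-type argument applied to $Q_t$ together with Kantorovich duality recovers $W_2^2\le \tfrac{2}{\alpha}\mathrm{KL}$; I would mention this as a cleaner fallback in case the PDE argument becomes cumbersome in our unbounded setting.
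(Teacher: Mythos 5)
The paper states this lemma as a citation to \citet{otto2000generalization} and \citet{bobkov2001hypercontractivity} and gives no proof of its own; your argument is precisely the Otto--Villani gradient-flow proof from the first of those references (with the Bobkov--Gentil--Ledoux Hamilton--Jacobi route correctly identified as the second), and the constants check out against the paper's normalization of the LSI. The proposal is correct and takes essentially the same approach as the paper's cited source.
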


\begin{theorem} \label{ULA-error-bound}\citep{vempala2019rapid}
    Assume $f(\theta)$ is smooth and has $L_1$-Lipschitz gradient, and consider a probability density $p(\theta)\propto\exp{(-f(\theta))}$ satisfying the log-Sobolev inequality with constant $\alpha$, ULA with step size $0<\iota<\frac{\alpha}{4L_1^2}$ satisfies
    \begin{align}
        \mathrm{KL}(\rho_k\|p)\leq e^{-\alpha\iota k}\mathrm{KL}(\rho_0\|p)+\frac{8\iota d_\theta L_1^2}{\alpha}.
    \end{align}
\end{theorem}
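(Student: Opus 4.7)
The plan is to follow the now-standard interpolation argument for ULA under log-Sobolev inequalities. First I would set up a continuous-time process $(\tilde{x}_t)_{t\in[0,k\iota]}$ that interpolates the discrete ULA iterates, defined on each subinterval $[s\iota,(s+1)\iota]$ by the SDE
\begin{align*}
\ud \tilde{x}_t = -\nabla f(\tilde{x}_{s\iota})\,\ud t + \sqrt{2}\,\ud W_t,
\end{align*}
whose drift is frozen at the left endpoint. This process matches the ULA iterates at times $t=s\iota$. Letting $\rho_t$ denote the law of $\tilde{x}_t$, the associated Fokker-Planck equation can be written in the divergence form
\begin{align*}
\partial_t \rho_t(x) = \nabla \cdot \bigl(\rho_t(x)\,\mathbb{E}[\nabla f(\tilde{x}_{s\iota})\mid \tilde{x}_t = x]\bigr) + \Delta \rho_t(x).
\end{align*}

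The next step is to differentiate $\mathrm{KL}(\rho_t\Vert p)$ along this interpolation. A direct computation (integration by parts using $p\propto e^{-f}$) yields a decomposition into a ``clean'' dissipation term plus a discretization error,
\begin{align*}
\frac{\ud}{\ud t}\mathrm{KL}(\rho_t\Vert p) \leq -\tfrac{3}{4}\,I(\rho_t\Vert p) + \mathbb{E}\bigl\Vert \nabla f(\tilde{x}_t) - \nabla f(\tilde{x}_{s\iota})\bigr\Vert^2,
\end{align*}
where $I(\rho_t\Vert p)=\mathbb{E}_{\rho_t}\Vert \nabla \log(\rho_t/p)\Vert^2$ is the relative Fisher information and the constant $3/4$ comes from absorbing a cross term by Young's inequality. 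At this point the log-Sobolev inequality converts the Fisher-information term into $-\tfrac{3}{2}\alpha\,\mathrm{KL}(\rho_t\Vert p)$, which (after a slightly looser constant) gives the desired exponential contraction.

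The step I expect to be the main obstacle is bounding the discretization error $\mathbb{E}\Vert \nabla f(\tilde{x}_t) - \nabla f(\tilde{x}_{s\iota})\Vert^2$. Using $L_1$-Lipschitz gradient this reduces to controlling $\mathbb{E}\Vert \tilde{x}_t - \tilde{x}_{s\iota}\Vert^2$, which splits into a drift contribution $(t-s\iota)^2\,\mathbb{E}\Vert \nabla f(\tilde{x}_{s\iota})\Vert^2$ and a Brownian contribution $2 d_\theta (t-s\iota)$. The Brownian part is clean, but the drift part requires a uniform-in-$s$ bound on $\mathbb{E}_{\rho_{s\iota}}\Vert \nabla f\Vert^2$; this is obtained by relating $\mathbb{E}_{\rho_t}\Vert \nabla f\Vert^2$ to $\mathbb{E}_p\Vert \nabla f\Vert^2 \leq L_1 d_\theta$ via the smoothness-based identity $\Vert \nabla f\Vert^2 \leq 2 L_1(f - \min f)$, plus a change-of-measure estimate that uses KL as a potential function. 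Putting this together yields the error bound of order $L_1^2 \iota\, d_\theta$ per unit time, and picking $\iota < \alpha/(4L_1^2)$ keeps the contraction strictly negative.

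Finally I would solve the resulting scalar differential inequality
\begin{align*}
\frac{\ud}{\ud t}\mathrm{KL}(\rho_t\Vert p) \leq -\alpha\,\mathrm{KL}(\rho_t\Vert p) + 8 L_1^2 d_\theta\, \iota
\end{align*}
on each interval $[s\iota,(s+1)\iota]$ via Gr\"onwall, chain the inequality across the $k$ intervals, and let the geometric sum of the error contributions telescope into the stationary term $8 \iota d_\theta L_1^2/\alpha$, yielding exactly the stated bound.
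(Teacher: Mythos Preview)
Your sketch is essentially the Vempala--Wibisono interpolation argument that the paper is invoking, and it is correct in outline; note however that the paper does not give any proof of this theorem at all---it is stated in the auxiliary-lemmas section purely as a citation of \citep{vempala2019rapid}, so there is no ``paper's own proof'' to compare against. One small comment on your sketch: in the original argument the drift term $\mathbb{E}_{\rho_{s\iota}}\Vert\nabla f\Vert^2$ is controlled by bounding it against the relative Fisher information $I(\rho_{s\iota}\Vert p)$ plus $\mathbb{E}_p\Vert\nabla f\Vert^2\leq L_1 d_\theta$, and the extra Fisher-information piece is then re-absorbed into the $-\tfrac{3}{4}I(\rho_t\Vert p)$ dissipation (this is precisely where the step-size restriction $\iota<\alpha/(4L_1^2)$ enters); your alternative via $\Vert\nabla f\Vert^2\leq 2L_1(f-\min f)$ and a change-of-measure estimate can also be made to work but is a slightly different bookkeeping.
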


\begin{lemma}[Tail bound for Chi-squared variable \cite{laurent2000adaptive}]\label{Tail-bound}
    Let $\theta\sim\cN(0,\sigma^2 I_{d_\theta})$ be a Gaussian random variable on $\mathbb{R}^{d_\theta}$. Then, we obtain
    \begin{align}
        \mathbb{P}[\|\theta\|^2\geq2c]\leq\exp{\left(-\frac{c}{10\sigma^2}\right)},\, \, \, \, \forall\, c\geq d_\theta\sigma^2.
    \end{align}
\end{lemma}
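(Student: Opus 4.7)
The plan is to reduce the statement to the standard Chernoff bound for the chi-squared distribution. Since $\theta \sim \cN(0,\sigma^2 I_{d_\theta})$, the rescaled quantity $Z := \|\theta\|^2/\sigma^2$ follows a $\chi^2_{d_\theta}$ distribution, which has moment generating function $\mathbb{E}[e^{\lambda Z}] = (1-2\lambda)^{-d_\theta/2}$ for $\lambda \in (0,1/2)$. Setting $u := c/\sigma^2$, the claim becomes
\begin{equation*}
    \mathbb{P}[Z \geq 2u] \leq \exp(-u/10), \quad \text{for all } u \geq d_\theta,
\end{equation*}
which is a purely dimension-free (relative to $\sigma$) chi-squared tail inequality.

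The first step is to apply the Markov-Chernoff inequality: for any $\lambda \in (0,1/2)$,
\begin{equation*}
    \mathbb{P}[Z \geq 2u] \leq \mathbb{E}[e^{\lambda Z}]\, e^{-2\lambda u} = (1-2\lambda)^{-d_\theta/2} e^{-2\lambda u}.
\end{equation*}
I would then pick $\lambda = 1/4$, which is convenient because it yields the clean estimate $\mathbb{P}[Z \geq 2u] \leq 2^{d_\theta/2} e^{-u/2}$ without requiring any delicate optimization.

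The second step is to absorb the prefactor $2^{d_\theta/2}$ using the hypothesis $u \geq d_\theta$. Since $d_\theta \leq u$, we have $2^{d_\theta/2} \leq 2^{u/2}$, so
\begin{equation*}
    \mathbb{P}[Z \geq 2u] \leq 2^{u/2} e^{-u/2} = \exp\!\left(-\tfrac{1-\log 2}{2}\, u\right) \leq \exp(-u/10),
\end{equation*}
where the final inequality uses $(1-\log 2)/2 \approx 0.153 > 1/10$. Substituting back $u = c/\sigma^2$ yields the claimed bound.

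There is no real obstacle: the statement is a mild weakening of the classical Laurent--Massart tail bound, and the only real design choice is $\lambda$. The value $\lambda = 1/4$ is the cleanest choice because the hypothesis $c \geq d_\theta \sigma^2$ is tuned exactly to cancel the factor $2^{d_\theta/2}$; sharper constants are available by numerically optimizing $\lambda \in (0,1/2)$ but are not needed downstream.
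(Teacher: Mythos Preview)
Your argument is correct. The Chernoff step with $\lambda=1/4$ indeed yields $\mathbb{P}[Z\geq 2u]\leq 2^{d_\theta/2}e^{-u/2}$, and the hypothesis $u\geq d_\theta$ is exactly what is needed to absorb the prefactor, giving the exponent $(1-\log 2)/2\approx 0.153>1/10$.

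Note that the paper does not actually prove this lemma: it is listed among the auxiliary results and simply attributed to \cite{laurent2000adaptive}. The Laurent--Massart bound itself is sharper (it gives a sub-exponential tail of the form $\exp(-t)$ when $\|\theta\|^2/\sigma^2 \geq d_\theta + 2\sqrt{d_\theta t}+2t$), from which the stated inequality with constant $1/10$ can also be read off after a short calculation. Your direct Chernoff argument is more elementary and entirely self-contained; the Laurent--Massart route would give a slightly better constant but, as you note, this is irrelevant downstream since the lemma is only used qualitatively in Appendix~\ref{discre} to truncate the integration domain.
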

\begin{lemma}[Hoeffding's inequality \citep{hoeffding1963probability}]\label{Hoeffding's-inequality}
    Let $X, X_1, \cdots, X_M$ be i.i.d random variables taking values in $[-a, a]$ for $a>0$. Then, for any $\rho>0$, we get
    \begin{align}
        \mathbb{P}\left[\left|\frac{1}{M}\sum_{r=1}^M X_r-\mathbb{E}[X]\right|>\rho\right]\leq2\exp{\left(-\frac{\rho^2 M}{2a^2}\right)}.
    \end{align}
\end{lemma}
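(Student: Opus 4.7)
The plan is to prove Hoeffding's inequality by the standard exponential moment / Chernoff bound argument, combined with Hoeffding's lemma on the MGF of a bounded centered random variable. First I would recenter: let $Y_r := X_r - \mathbb{E}[X]$, so that $\{Y_r\}$ are i.i.d., mean zero, and $Y_r \in [X_r - \mathbb{E}[X]] \subseteq [-2a, 2a]$; the quantity to be bounded is $\mathbb{P}[|\tfrac{1}{M}\sum_{r=1}^M Y_r| > \rho]$. By a union bound over the two tails, it suffices to bound $\mathbb{P}[\tfrac{1}{M}\sum_r Y_r > \rho]$ by $\exp(-\rho^2 M /(2a^2))$ and combine with the symmetric tail.

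Second, I would apply the Chernoff bound: for any $t > 0$,
\begin{equation*}
    \mathbb{P}\Bigl[\sum_{r=1}^M Y_r > M\rho\Bigr] \leq e^{-tM\rho}\,\mathbb{E}\Bigl[\exp\Bigl(t\sum_{r=1}^M Y_r\Bigr)\Bigr] = e^{-tM\rho}\bigl(\mathbb{E}[e^{tY_1}]\bigr)^M,
\end{equation*}
where the equality uses the i.i.d.\ assumption to factor the MGF.

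The main technical ingredient is Hoeffding's lemma bounding the MGF of a bounded centered variable: if $Z$ is mean-zero and supported in $[\alpha, \beta]$ then $\mathbb{E}[e^{tZ}] \leq \exp(t^2(\beta - \alpha)^2/8)$. The standard proof uses convexity of $z \mapsto e^{tz}$ to write $e^{tz} \leq \tfrac{\beta - z}{\beta - \alpha}e^{t\alpha} + \tfrac{z - \alpha}{\beta - \alpha}e^{t\beta}$, take expectations, and then analyze the resulting log-MGF via its Taylor expansion (its second derivative is bounded by $(\beta - \alpha)^2/4$). Applied to $Y_r$ with $[\alpha,\beta] \subseteq [-2a, 2a]$, this gives $\mathbb{E}[e^{tY_r}] \leq \exp(t^2 \cdot (2a)^2/8) = \exp(t^2 a^2/2)$.

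Finally I would optimize: substituting the MGF bound yields $\mathbb{P}[\sum_r Y_r > M\rho] \leq \exp(-tM\rho + Mt^2 a^2/2)$. Minimizing the exponent in $t$ gives $t^* = \rho/a^2 > 0$, producing the tail bound $\exp(-M\rho^2/(2a^2))$. Repeating the same argument for $\mathbb{P}[-\tfrac{1}{M}\sum_r Y_r > \rho]$ (equivalently, applying the bound to $-Y_r$, which also lies in $[-2a,2a]$ with zero mean) and summing the two tails yields the factor $2$ in front and completes the proof. The only nontrivial step is Hoeffding's lemma itself; everything else is bookkeeping in the Chernoff bound.
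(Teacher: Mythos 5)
Your proof is correct and is the standard Chernoff--Hoeffding argument; the paper itself states this as a cited auxiliary lemma (Hoeffding, 1963) without proof, so there is nothing to compare against. One small point of precision: the containment $Y_r \in [-2a,2a]$ that you write down would by itself only give $\beta-\alpha \le 4a$ in Hoeffding's lemma; what you actually need (and implicitly use) is that $Y_r = X_r - \mathbb{E}[X]$ is a translate of $X_r$ and hence supported in an interval of width exactly $2a$, which yields $\mathbb{E}[e^{tY_r}] \le \exp(t^2 a^2/2)$ and, after optimizing $t^* = \rho/a^2$, the stated constant $2\exp(-\rho^2 M/(2a^2))$.
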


\section{Existence of Mixed Nash Equilibrium}\label{existence-MNE}

	\begin{proof}[Proof of Theorem \ref{existence-nash}]
        For the purpose of ensuring a seamless proof, it is necessary to introduce the following lemma.
        \begin{lemma}\label{auxiliary-lemma}
		The following set
		\begin{align}
			\mathcal{P}(g,\mathcal{L},\RR^d):=&\left\{p:\mathbb{R}^d\rightarrow\mathbb{R}^+\left|p\leq g \text{ a.e. on } \RR^d,\, p\in\mathcal{B}(\mathcal{L}),\, \int_{\mathbb{R}^d}p(x)\mathrm{d}x=1\right.\right\},
		\end{align}
		is a covex compact set where
		$$
		\mathcal{B}(\mathcal{L}):=\left\{f:\mathbb{R}^d\rightarrow\mathbb{R}\left|\max\limits_{x,y\in B_k(\mathbf{0})}\frac{|f(x)-f(y)|}{\|x-y\|}\leq L_k,\, \forall k\in\mathbb{N}^+\right.\right\},
		$$
		$g\in L^1$ and $\mathcal{L}:=\{L_k\}_{k=1}^{\infty}$.
	\end{lemma}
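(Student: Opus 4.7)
The plan is to establish the two properties of the set $\mathcal{P}(g,\mathcal{L},\mathbb{R}^d)$ separately, with the relevant topology being that of uniform convergence on compact sets --- which is what one needs downstream for a Schauder-type fixed-point argument in the proof of Theorem~\ref{existence-nash}.

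Convexity is a direct verification. Given $p_1,p_2 \in \mathcal{P}(g,\mathcal{L},\mathbb{R}^d)$ and $\alpha \in [0,1]$, the convex combination $\alpha p_1 + (1-\alpha) p_2$ is non-negative, integrates to one by linearity, is bounded above by $g$ almost everywhere since $g$ dominates each summand, and its oscillation on any $B_k(\mathbf{0})$ is controlled by the triangle inequality with the same constant $L_k$. So the set is convex.

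For compactness I would argue sequentially. The first subgoal is to derive a uniform $L^\infty$ bound on each $B_k(\mathbf{0})$; this is not postulated directly but follows from combining the Lipschitz condition with the normalization $\int p = 1$. Indeed, if $p(x_0) = M$ for some $x_0 \in B_k(\mathbf{0})$, then by the $L_{k+1}$-Lipschitz bound applied on the slightly larger ball $B_{k+1}(\mathbf{0})$, one has $p \geq M/2$ on a ball of radius $r = \min(M/(2L_{k+1}),1)$ around $x_0$, which forces $1 \geq (M/2)\,c_d\,r^d$ and hence an upper bound on $M$ depending only on $L_{k+1}$ and $d$. Equi-boundedness together with equi-continuity on $B_k(\mathbf{0})$ then lets the Arzelà--Ascoli theorem extract a subsequence that converges uniformly on $B_k(\mathbf{0})$, and a standard diagonal extraction over $k$ produces one subsequence $p_{n_j}$ converging uniformly on every compact set to some $p_\infty \colon \mathbb{R}^d \to \mathbb{R}^+$.

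It then remains to verify that $p_\infty \in \mathcal{P}(g,\mathcal{L},\mathbb{R}^d)$: non-negativity and the $L_k$-Lipschitz estimate on each $B_k(\mathbf{0})$ are preserved under uniform convergence, and the pointwise bound $p_\infty \leq g$ a.e.\ passes to the limit on the complement of a countable union of null sets. The step I expect to be the real obstacle is showing $\int_{\mathbb{R}^d} p_\infty = 1$, since uniform convergence on compacta alone is too weak (Fatou only gives $\int p_\infty \leq 1$, and mass could escape to infinity). Here I would invoke the dominated convergence theorem with the common envelope $g \in L^1$ as the dominating function; this is precisely the role played by the integrability hypothesis on $g$ in the statement. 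With the integral preserved, the limit point lies in $\mathcal{P}(g,\mathcal{L},\mathbb{R}^d)$, completing the closedness-plus-precompactness argument and hence the compactness claim.
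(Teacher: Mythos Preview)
Your proposal is correct and follows essentially the same route as the paper: Arzel\`a--Ascoli on the balls $B_k(\mathbf{0})$, a diagonal extraction over $k$, and dominated convergence with the envelope $g\in L^1$ to ensure the limit still integrates to one. In fact you are more careful than the paper, which invokes Arzel\`a--Ascoli without explicitly checking equiboundedness; your argument deriving a local $L^\infty$ bound from the Lipschitz constant plus the normalization $\int p=1$ fills that gap.
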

	\begin{proof}
		It is easy to notice that $\mathcal{P}(g,\mathcal{L},\RR^d)$ is convex. Considering the infinite sets $\{p_n\}_{n=1}^{\infty}\subset\mathcal{P}(g,\mathcal{L}, \RR^d)$ and $\left\{B_n(\mathbf{0})\subset\mathbb{R}^d\right\}_{n=1}^{\infty}$, we proceed to construct a subsequence of $\{p_n\}_{n=1}^{\infty}$ in the following manner:
		\begin{enumerate}
			\item Applying the Arzelà–Ascoli theorem, we ascertain the existence of a subsequence
			$\{p_{1n}\}_{n=1}^{\infty}$ of $\{p_n\}_{n=1}^{\infty}$, which converges uniformly to 
			$p^*|_{B_1(\mathbf{0})}$ on $B_1(\mathbf{0})$. Moreover, $p^*|_{B_1(\mathbf{0})}$ satisfies the Lipschitz condition: 
            $$
            \max\limits_{x,y\in B_1(\mathbf{0})}\frac{|p^*|_{B_1(\mathbf{0})}(x)-p^*|_{B_1(\mathbf{0})}(y)|}{\|x-y\|}\leq L_1.
            $$
			\item For $k\in[1:\infty)$, we employ the Arzelà–Ascoli theorem once again to obtain a subsequence $\{p_{(k+1)n}\}_{n=1}^{\infty}$ of $\{p_{kn}\}_{n=k}^{\infty}$, which converges uniformly to $p^*|_{B_{k+1}(\mathbf{0})}$ on $B_{k+1}(\mathbf{0})$.Furthermore,  $p^*|_{B_{k+1}(\mathbf{0})}$ satisfies the Lipschitz condition: 
            $$
            \max\limits_{x,y\in B_{k+1}(\mathbf{0})}\frac{|p^*|_{B_{k+1}(\mathbf{0})}(x)-p^*|_{B_{k+1}(\mathbf{0})}(y)|}{\|x-y\|}\leq L_{k+1}.
            $$
		\end{enumerate}
		As a result, we can establish the existence of a subsequence $\{p_{kk}\}_{k=1}^{\infty}$ of $\{p_n\}_{n=1}^{\infty}$, which converges to $p^*$ a.e. on $\mathbb{R}^d$. Noting $g(x)$ belongs to $L^1$, then it's intuitive that $p^*\in L^1$ and $\|p_{kk}-p^*\|_{L^1}\rightarrow 0$ by using dominate convergence theorem, which indicates that $p^*\in\mathcal{P}(g,\mathcal{L}, \RR^d)$.
	\end{proof}
 
		Note that the following two expressions are equivalent
		\begin{enumerate}
			\item \begin{align}
				\hat{p}=&\argmin\limits_{p\in\mathcal{P}_2(\RR^m)}\mathbb{E}_{p(x)}\mathbb{E}_{\bar{q}(y)}[f(x,y)]+\mathcal{R}(p)-\mathcal{R}(\bar{q}),
				\\
				\hat{q}=&\argmin\limits_{q\in\mathcal{P}_2(\RR^n)}\mathbb{E}_{\bar{p}(x)}\mathbb{E}_{q(y)}[f(x,y)]+\mathcal{R}(\bar{p})-\mathcal{R}(q).
			\end{align}
			\item \begin{align}
				\hat{p}\propto&\exp\left\{-(\lambda_2)^{-1}\left[\mathbb{E}_{\bar{q}(y)}[f(x,y)]+\lambda_1\|x\|^2\right]\right\},
				\\
				\hat{q}\propto&\exp\left\{-(\lambda_2)^{-1}\left[-\mathbb{E}_{\bar{p}(x)}[f(x,y)]+\lambda_1\|y\|^2\right]\right\},
			\end{align}
		\end{enumerate}
	for any $(\bar{p},\bar{q})\in\mathcal{P}_2(\RR^m)\times\mathcal{P}_2(\RR^n)$.
	Therefore, we denote operator $\boldsymbol{F}(p,q):\mathcal{P}_2(\RR^m)\times\mathcal{P}_2(\RR^n)\rightarrow \mathcal{P}_2(\RR^m)\times\mathcal{P}_2(\RR^n)$ as
	\begin{align}\label{operator-expression}
		\boldsymbol{F}(p,q):=\begin{pmatrix}
			\frac{\exp\left\{-(\lambda_2)^{-1}\left[\mathbb{E}_{q(y)}[f(x,y)]+\lambda_1\|x\|^2\right]\right\}}{Z_q}
			\\
			\frac{\exp\left\{-(\lambda_2)^{-1}\left[-\mathbb{E}_{p(x)}[f(x,y)]+\lambda_1\|y\|^2\right]\right\}}{Z_p}
		\end{pmatrix},
	\end{align}
	where $Z_q$ and $Z_p$ represent the normalization factors. Specifically, 
	\begin{align}
		Z_q=&\int_{\RR^n}\exp\left\{-(\lambda_2)^{-1}\left[\mathbb{E}_{q(y)}[f(x,y)]+\lambda_1\|x\|^2\right]\right\}\mathrm{d}x,
		\\ Z_p=&\int_{\RR^m}\exp\left\{-(\lambda_2)^{-1}\left[-\mathbb{E}_{p(x)}[f(x,y)]+\lambda_1\|y\|^2\right]\right\}\mathrm{d}y.
	\end{align} 
	Hence, to establish the proof of the theorem, it is sufficient to demonstrate the existence of a fixed point for the operator $\boldsymbol{F}$. As established by Lemma \ref{auxiliary-lemma},  $\mathcal{P}(g,\mathcal{L},\RR^d)$ is a compact set for fixed $g$ and $\mathcal{L}$. Therefore, the next step is to prove that for any $(p,q)\in\mathcal{P}(\RR^m)\times\mathcal{P}(\RR^n)$, $\boldsymbol{F}(p,q)$ belongs to $\mathcal{P}(g_p,\mathcal{L}_p,\RR^m)\times\mathcal{P}(g_q,\mathcal{L}_q,\RR^n)$ where the selection of $g_p, g_q$ and $\mathcal{L}_p, \mathcal{L}_q$ depends on following estimation. For simplicity, we define $(\boldsymbol{F}_{x}(p,q),\boldsymbol{F}_{y}(p,q))=\boldsymbol{F}(p,q)$. The formulation of Eq.~\eqref{operator-expression} implies that
	\begin{equation}
		\begin{split}
			\nabla_x \boldsymbol{F}_x(p,q)=&-(\lambda_2)^{-1}\left[\mathbb{E}_{q(y)}\left[\nabla_x f(x,y)\right]+2\lambda_1 x\right]\boldsymbol{F}_x(p,q),
			\\
			\nabla_y \boldsymbol{F}_y(p,q)=&-(\lambda_2)^{-1}\left[-\mathbb{E}_{p(x)}\left[\nabla_y f(x,y)\right]+2\lambda_1 y\right]\boldsymbol{F}_y(p,q).
		\end{split}
	\end{equation}
	According to Assumption 1 and Proposition \ref{estimation-operator}, we have 
	\begin{align}
		F_x(p,q)\leq&Z_q^{-1}\exp\{(\lambda_2)^{-1}\left[1-\lambda_1\|x\|^2\right]\}\leq\gamma_m\exp\left\{-(\lambda_2)^{-1}\lambda_1\|x\|^2\right\},\notag
		\\
		F_y(p,q)\leq&Z_p^{-1}\exp\{(\lambda_2)^{-1}\left[1-\lambda_1\|y\|^2\right]\}\leq\gamma_n\exp\left\{-(\lambda_2)^{-1}\lambda_1\|y\|^2\right\},\notag
	\end{align} 
	and
	\begin{align}
		\max\limits_{x\in\RR^m}\left\|\nabla_x\boldsymbol{F}_x(p,q)\right\|\leq&\gamma_m\left[(\lambda_2)^{-1}L_0+(2\lambda_1)^{1/2}(\lambda_2)^{-1/2}\right],\notag
		\\
		\max\limits_{y\in\RR^n}\left\|\nabla_y\boldsymbol{F}_y(p,q)\right\|\leq&\gamma_n\left[(\lambda_2)^{-1}L_0+(2\lambda_1)^{1/2}(\lambda_2)^{-1/2}\right],\notag
	\end{align}
	where $\gamma_m=\left(\frac{\lambda_1}{\lambda_2}\right)^{m/2}\pi^{-m/2}\exp\left\{2(\lambda_2)^{-1}\right\}$, $\gamma_n=\left(\frac{\lambda_1}{\lambda_2}\right)^{n/2}\pi^{-n/2}\exp\left\{2(\lambda_2)^{-1}\right\}$. Hence, we can choose 
	\begin{align}
		g_p(x):=&\gamma_m\exp\left\{-(\lambda_2)^{-1}\lambda_1\|x\|^2\right\},\notag
		\\
		g_q(y):=&\gamma_n\exp\left\{-(\lambda_2)^{-1}\lambda_1\|y\|^2\right\},\notag
	\end{align}
	and 
	\begin{align}
		L_{p,k}:=&\gamma_m\left[(\lambda_2)^{-1}L_0+(2\lambda_1)^{1/2}(\lambda_2)^{-1/2}\right],\notag
		\\ L_{q,k}:=&\gamma_n\left[(\lambda_2)^{-1}L_0+(2\lambda_1)^{1/2}(\lambda_2)^{-1/2}\right],\notag
	\end{align}
	for any $k\in\NN^+$. We have completed the proof w.r.t that $\boldsymbol{F}(p,q)$ is a continuous mapping (refer to Proposition \ref{F-continuity}) from the convex compact set $\mathcal{P}(g_p,\mathcal{L}_p,\RR^m)\times\mathcal{P}(g_q,\mathcal{L}_q,\RR^n)$ onto itself. Noticing that $\left(\mathcal{N}\left(0,(2\lambda_1)^{-1}\lambda_2\mathbf{I}_m\right), \mathcal{N}\left(0,(2\lambda_1)^{-1}\lambda_2\mathbf{I}_n\right)\right)$ belongs to $\mathcal{P}(g_p,\mathcal{L}_p,\RR^m)\times\mathcal{P}(g_q,\mathcal{L}_q,\RR^n)$, then we finish the proof by combining Brouwer fixed-point theorem.
	\end{proof}

        Proposition \ref{estimation-operator} aims to demonstrate that the probability density function $\boldsymbol{F}_x(p,q)$ and $\boldsymbol{F}_y(p,q)$ fall within functions family $\cB(\cL)$ which has been mentioned in Lemma \ref{auxiliary-lemma}. Proposition \ref{F-continuity} is proposed to establish the continuity of the operator $\boldsymbol{F}(p,q)$.
	\begin{proposition}\label{estimation-operator}
		We have
		\begin{align}
			\int_{\RR^d}\exp\{-\lambda\|x\|^2\}\mathrm{d}x&=(\lambda)^{-d/2}\pi^{d/2},\label{equation-gaussian}
			\\
			\max\limits_{x\in\RR^d}\|x\|\exp\{-\lambda\|x\|^2\}&\leq(2\lambda)^{-1/2}\exp\{-1/2\}.\label{max-norm}
		\end{align}
	\end{proposition}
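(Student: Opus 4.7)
The plan is to treat the two inequalities separately, since they are essentially standard one-variable facts that have been reduced to routine calculus.

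For Eq.~\eqref{equation-gaussian}, I would exploit separability of the integrand under the $\ell^2$ norm. Writing $\|x\|^2 = \sum_{i=1}^d x_i^2$, Fubini's theorem factorizes the integral as
\[
\int_{\RR^d} \exp\{-\lambda\|x\|^2\}\mathrm{d}x = \prod_{i=1}^d \int_{-\infty}^{\infty} \exp\{-\lambda x_i^2\}\mathrm{d}x_i.
\]
The one-dimensional Gaussian integral $\int_{-\infty}^{\infty}\exp\{-\lambda t^2\}\mathrm{d}t = \sqrt{\pi/\lambda}$ is the classical identity proved by squaring and converting to polar coordinates, so raising to the $d$-th power yields $(\pi/\lambda)^{d/2} = \lambda^{-d/2}\pi^{d/2}$, as claimed.

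For Eq.~\eqref{max-norm}, I would reduce to a one-dimensional optimization by noting that $\|x\|\exp\{-\lambda\|x\|^2\}$ depends on $x$ only through the scalar $r := \|x\| \ge 0$. Thus the problem reduces to maximizing $\varphi(r) := r\exp\{-\lambda r^2\}$ for $r \geq 0$. Differentiating gives $\varphi'(r) = (1 - 2\lambda r^2)\exp\{-\lambda r^2\}$, which vanishes uniquely on $(0,\infty)$ at $r^* = (2\lambda)^{-1/2}$. Since $\varphi(0) = 0$ and $\varphi(r) \to 0$ as $r \to \infty$, this critical point is the global maximum, and substitution yields $\varphi(r^*) = (2\lambda)^{-1/2}\exp\{-1/2\}$, matching the stated bound with equality.

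Neither step has a real obstacle: both identities are textbook-level. The only thing to keep in mind is that the statement of Eq.~\eqref{max-norm} is an inequality rather than an equality, which gives slack we do not need but is harmless — the calculation actually delivers the exact maximum. The proof is essentially a verification, so I would keep the exposition compact and simply invoke the classical Gaussian integral and a one-line calculus derivation.
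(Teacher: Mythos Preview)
Your proposal is correct and follows essentially the same approach as the paper: both reduce Eq.~\eqref{equation-gaussian} to the classical Gaussian integral via factorization (the paper groups coordinates into pairs and uses polar coordinates in $\RR^2$, you factor into one-dimensional integrals and invoke the same polar-coordinate identity), and both handle Eq.~\eqref{max-norm} by the identical one-variable optimization of $r\mapsto r\exp\{-\lambda r^2\}$. The arguments are interchangeable and equally elementary.
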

	\begin{proof}
		Considering $z\in\RR^2$, we obtain
		\begin{align}
			\int_{\RR^2}\exp\{-\lambda\|z\|^2\}\mathrm{d}z&=2\pi\int_{0}^{\infty}r\exp\{-\lambda r^2\}\mathrm{d}r\notag
			\\
			&=(\lambda)^{-1}\pi.
		\end{align}
		Therefore, we derive Eq.~\eqref{equation-gaussian} by using the fact that
		\begin{align}
			\int_{\RR^d}\exp\{-\lambda\|x\|^2\}\mathrm{d}x=\left[\int_{\RR^2}\exp\{-\lambda\|z\|^2\}\mathrm{d}z\right]^{d/2}.
		\end{align}
	Notice that function $g(r)=r\exp\{-\lambda r^2\}$ for $r\geq0$ attains maximum when $r=(2\lambda)^{-1/2}$. Hence, Eq.~\eqref{max-norm} is deduced from $g(r)\leq(2\lambda)^{-1/2}\exp\{-1/2\}$ directly.
    \end{proof}
    \begin{proposition}\label{F-continuity}
        Assuming Assumption \ref{ass:a1} holds, the operator $\boldsymbol{F}(p,q)$ is $L^1$ continuous w.r.t $(p,q)\in\cP(\RR^m)\times\cP(\RR^n)$ under $\|\cdot\|_{L^1}$.
    \end{proposition}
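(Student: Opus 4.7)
The plan is to fix $(p,q) \in \cP(\RR^m)\times\cP(\RR^n)$, take an arbitrary sequence $(p_n,q_n)\to(p,q)$ in $L^1$, and show that each component $\bF_x(p_n,q_n)\to \bF_x(p,q)$ and $\bF_y(p_n,q_n)\to \bF_y(p,q)$ in $L^1$. By symmetry of the construction, I would only write out the $\bF_x$ case in full.

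The first step is to exploit the boundedness $|f|\le 1$ from \ref{ass:a4} to show that the unnormalized numerator depends continuously on $q$. Concretely, for any $x$,
\[
\bigl|\EE_{q_n(y)}[f(x,y)]-\EE_{q(y)}[f(x,y)]\bigr|
=\Bigl|\int f(x,y)\bigl(q_n(y)-q(y)\bigr)\,\ud y\Bigr|
\le \|q_n-q\|_{L^1},
\]
uniformly in $x$. Setting $N_{q}(x):=\exp\{-(\lambda_2)^{-1}[\EE_{q(y)}f(x,y)+\lambda_1\|x\|^2]\}$ and using the elementary inequality $|e^{a}-e^{b}|\le \max(e^a,e^b)\,|a-b|$ together with the uniform envelope
\[
N_{q_n}(x),\; N_q(x)\le \exp\{(\lambda_2)^{-1}\}\exp\{-(\lambda_2)^{-1}\lambda_1\|x\|^2\}=:G(x)\in L^1(\RR^m),
\]
I get the pointwise bound $|N_{q_n}(x)-N_q(x)|\le (\lambda_2)^{-1}G(x)\,\|q_n-q\|_{L^1}$. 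Integrating yields $\|N_{q_n}-N_q\|_{L^1}\le C_1\|q_n-q\|_{L^1}$ with $C_1:=(\lambda_2)^{-1}\|G\|_{L^1}$, which in particular gives $|Z_{q_n}-Z_q|\le C_1\|q_n-q\|_{L^1}$.

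The second step handles the normalization. Since $|\EE_{q(y)}f(x,y)|\le 1$, every $Z_{q'}$ satisfies
\[
Z_{q'}\ge \int_{\RR^m}\exp\bigl\{-(\lambda_2)^{-1}-(\lambda_2)^{-1}\lambda_1\|x\|^2\bigr\}\,\ud x =: Z_{\min}>0,
\]
so the normalizers are uniformly bounded below. Then the standard add–subtract trick gives
\[
\Bigl\|\tfrac{N_{q_n}}{Z_{q_n}}-\tfrac{N_q}{Z_q}\Bigr\|_{L^1}
\le \tfrac{1}{Z_{q_n}}\|N_{q_n}-N_q\|_{L^1}+\tfrac{\|N_q\|_{L^1}}{Z_{q_n}Z_q}|Z_q-Z_{q_n}|
\le \tfrac{2C_1}{Z_{\min}}\,\|q_n-q\|_{L^1},
\]
which tends to $0$. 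This proves $L^1$ continuity of $\bF_x$; the analogous argument, using $\|p_n-p\|_{L^1}$ in place of $\|q_n-q\|_{L^1}$, handles $\bF_y$.

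The proof is essentially routine bookkeeping; the only subtlety is making sure the envelope $G$ is integrable and the normalizers are uniformly bounded below, both of which follow immediately from $|f|\le 1$ and the quadratic part of $\cR$. Assumption \ref{ass:a1} is not actually needed here (it was used earlier for the Lipschitz estimates of Proposition \ref{estimation-operator}); only the uniform bound on $f$ matters for $L^1$ continuity.
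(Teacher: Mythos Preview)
Your proposal is correct and follows essentially the same route as the paper: both arguments bound the unnormalized density via $|e^a-e^b|\le(\text{envelope})\cdot|a-b|$ together with $|\EE_{q_n}f-\EE_q f|\le\|q_n-q\|_{L^1}$ from $|f|\le 1$, then handle the normalizer by an add--subtract step using a uniform lower bound on $Z$. Your remark that Assumption~\ref{ass:a1} is not actually used is also accurate---the paper's proof, like yours, relies only on the boundedness of $f$.
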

    \begin{proof}
        Without loss of generality, it is sufficient to provide the continuity proof for one side of $p$.
        For simplicity, we denote $\hat{\boldsymbol{F}}_p(y)=\exp\left\{-(\lambda_2)^{-1}\left[-\EE_{p(x)}[f(x,y)]+\lambda_1\|y\|^2\right]\right\}$ and $\zeta(\lambda)=\exp\{3\lambda^{-1}\}\lambda^{-1}$. According to 
        \begin{align}\label{continuity-1}
            \|\hat{\boldsymbol{F}}_p-\hat{\boldsymbol{F}}_{p'}\|_{L^1}\overset{a}{\leq}&\int_{\RR^n}\zeta(\lambda_2)\left|\EE_{p(x)}[f(x,y)]-\EE_{p'(x)}[f(x,y)]\right|\exp\{-(\lambda_2)^{-1}\lambda_1\|y\|^2\}\mathrm{d}y
            \notag
            \\
            \leq&\zeta(\lambda_2)\|p-p'\|_{L^1}\int_{\RR^n}\exp\{-(\lambda_2)^{-1}\lambda_1\|y\|^2\}\mathrm{d}y,
        \end{align}
        where (a) follows from the fact that $|\exp\{x\}-\exp\{y\}|\leq\exp\{3c\}|x-y|$ for any $x,y\in[-c,c]$ and $c>0$, we have 
        \begin{align}\label{continuity-2}
            |Z_p-Z_{p'}|\leq\|\hat{\boldsymbol{F}}_p-\hat{\boldsymbol{F}}_{p'}\|_{L^1}\leq\zeta(\lambda_2)\|p-p'\|_{L^1}\int_{\RR^n}\exp\{-(\lambda_2)^{-1}\lambda_1\|y\|^2\}\mathrm{d}y.
        \end{align}
        Combining Eq.~\eqref{continuity-1} and Eq.~\eqref{continuity-2}, we obtain
        \begin{align}
            \left\|(Z_p)^{-1}\hat{\boldsymbol{F}}_p-(Z_{p'})^{-1}\hat{\boldsymbol{F}}_{p'}\right\|_{L^1}\leq&(Z_{p'})^{-1}\left[|Z_p-Z_{p'}|+\|\hat{\boldsymbol{F}}_p-\hat{\boldsymbol{F}}_{p'}\|_{L^1}\right]\notag
            \\
            \leq&\zeta(\lambda_2/2)\|p-p'\|_{L^1}.
        \end{align}
    \end{proof}

\section{Outer Loop Error Bound}

For simplicity, we define $w:=(p,q)$, $\Bar{w}:=(\Bar{p},\Bar{q})$, and
\begin{align}
    Q(\Bar{w},w):=&\lambda_1\mathbb{E}_{\Bar{p}}[\|x\|^{2}]+\lambda_2 \cE(\Bar{p})+\mathbb{E}_{(\Bar{p},q)}[f(x,y)]-\lambda_1\mathbb{E}_{q}[\|y\|^{2}]-\lambda_2 \cE(q)\notag
    \\
    &-[\lambda_1\mathbb{E}_{p}[\|x\|^{2}]+\lambda_2 \cE(p)+\mathbb{E}_{(p,\Bar{q})}[f(x,y)]-\lambda_1\mathbb{E}_{\Bar{q}}[\|y\|^{2}]-\lambda_2 \cE(\Bar{q})],
    \\
    \cQ_{x}(C):=&\left\{p(x)\in\mathcal{P}_2(\mathbb{R}^m)\left|p(x)\propto e^{-l_1(x)-\frac{\lambda_1}{\lambda_2}\|x\|^2}; \|l_1\|_\infty\leq \frac{C}{\lambda_2}\right.\right\},\label{define-auxi-function-1}\\
    \cQ_{y}(C):=&\left\{q(y)\in\cP_2(\mathbb{R}^n)\left|q(y)\propto e^{-l_2(y)-\frac{\lambda_1}{\lambda_2}\|y\|^2};\|l_2\|_\infty\leq \frac{C}{\lambda_2}\right.\right\},\label{define-auxi-function-2}
\end{align}
for some positive constant $C$. Moreover, we denote that 
	\begin{align}
		F_{\psi}^x(p):=&\int_{\cX}(\psi(x)+\lambda_1\|x\|^2)p(x)\ud x+\lambda_2\cE(p),\notag
		\\
		F_{\phi}^y(q):=&\int_{\cY}(\phi(y)+\lambda_1\|y\|^2)q(y)\ud y+\lambda_2\cE(q),\notag
	\end{align}
	for any $\psi:\cX\rightarrow\RR, \phi:\cY\rightarrow\RR$, 
	\begin{align}
		\rmBiLin_f(p,q):=\iint_{\cX\times\cY}p(x)f(x,y)q(y)\ud x\ud y,
	\end{align}
	for any $f:\cX\times\cY\rightarrow\RR$, $p:\cX\rightarrow\RR$ and $q:\cY\rightarrow\RR$. In addition, we introduce some elementary symbols for $p(x)\in\cP_2(\cX)$ satisfying LSI with constant $\alpha_{p}$ and $q(y)\in\cP_2(\cY)$ satisfying LSI with constant $\alpha_{q}$, and $\delta>0$:
	\begin{align}
		\rmErr_p^x(\delta):=&\left[\lambda_2+\frac{2\lambda_1}{\alpha_{p}}\right]\delta+\left[30+\frac{8\lambda_1\sigma(p)}{\sqrt{\alpha_{p}}}\right]\sqrt{\delta},\notag
		\\
		\rmErr_q^y(\delta):=&\left[\lambda_2+\frac{2\lambda_1}{\alpha_{q}}\right]\delta+\left[34+\frac{8\lambda_1\sigma(q)}{\sqrt{\alpha_{q}}}\right]\sqrt{\delta}.\notag
	\end{align}
    We say a density function $p$ satisfies log-Sobolev inequality with a constant $\alpha_p$ i.e. 
\begin{align}\label{LSI}
    \mathbb{E}_{p}[g^2\log(g^2)]-\mathbb{E}_{p}[g^2]\log\mathbb{E}_{p}[g^2]\leq\frac{2}{\alpha_p}\mathbb{E}_{p}[\|\nabla g\|^2]
\end{align}
In addition, we define $\sigma(p):=(\mathbb{E}_p[\|\theta\|^2])^{1/2}$ for any $p\in\mathcal{P}_2(\Pi)$.
Recalling that our algorithm updates $p_t$ and $q_t$ according to
\begin{align}
    q_t^*&=\arg\min_{q}\mathbb{E}_q[\hat{\phi}_t(y)]+\lambda_1\mathbb{E}_{q}[\|y\|^{2}]+\lambda_2\mathcal{E}(q)+\tau_t \mathrm{KL}(q\|q_{t-1}^*),\label{subproblem:q}
    \\ p_t^*&=\arg\min_{p}\mathbb{E}_p[\hat{\psi}_t(x)]+\lambda_1\mathbb{E}_{p}[\|x\|^{2}]+\lambda_2\mathcal{E}(p)+\eta_t \mathrm{KL}(p\|p_{t-1}^*),\label{subproblem:p}
\end{align}
where $\hat{\phi}_t(y)=-(1+\mu_t)\mathbb{E}_{\hat{p}_{t-1}}[f(x,y)]+\mu_t\mathbb{E}_{\hat{p}_{t-2}}[f(x,y)],\, \hat{\psi}_t(x)=\mathbb{E}_{\hat{q}_t}[f(x,y)]$, $p_t$ and $q_t$ are the approximate solutions of $p_t^*$ and $q_t^*$. 
The following Lemma summarizes the property of $p_t^*(x)$ and $q_t^*(y)$ at each step-$t$.
\begin{lemma}\label{estimation-log}
    Assume $\mu_t\leq1$ for all $t\geq0$, then $p_t^*(x)$ and $q_t^*(y)$ satisfy
    \begin{align}
        p_t^*(x)\in\mathcal{Q}_x(1),\, q_t^*(y)\in\mathcal{Q}_y(3).
    \end{align}
\end{lemma}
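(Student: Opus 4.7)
The plan is a straightforward induction on $t$ that exploits the recursive formulas \eqref{h_def} and \eqref{g_def} for the Gibbs potentials. The key observation is that the quadratic coefficient $\lambda_1/\lambda_2$ is a fixed point of the recursion: if one writes $h^{(t-1)}(y) = \frac{\lambda_1}{\lambda_2}\|y\|^2 + l_2^{(t-1)}(y)$, substituting into \eqref{h_def} gives
\[
h^{(t)}(y) \;=\; \frac{\lambda_1}{\lambda_2}\|y\|^2 + l_2^{(t)}(y),\qquad l_2^{(t)}(y) \;:=\; \frac{\hat{\phi}_t(y)}{\lambda_2+\tau_t} + \frac{\tau_t}{\lambda_2+\tau_t}\, l_2^{(t-1)}(y),
\]
since $(\lambda_1 + \tau_t\lambda_1/\lambda_2)/(\lambda_2+\tau_t) = \lambda_1/\lambda_2$. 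An entirely analogous identity holds for $g^{(t)}$ with $(\eta_t,\hat{\psi}_t)$ replacing $(\tau_t,\hat{\phi}_t)$, producing an $l_1^{(t)}$.

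The base case $t=0$ is immediate: Algorithm \ref{alg:algorithm} initializes at the Gaussian prior so $l_1^{(0)} \equiv l_2^{(0)} \equiv 0$, placing $p_0^*$ in $\mathcal{Q}_x(1)$ and $q_0^*$ in $\mathcal{Q}_y(3)$ trivially.

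For the inductive step, I invoke Assumption \ref{ass:a4} ($|f|\le 1$) together with $\mu_t \le 1$ to bound $\|\hat{\phi}_t\|_\infty \le (1+\mu_t) + \mu_t \le 3$ and $\|\hat{\psi}_t\|_\infty \le 1$. Since the recursion for $l_2^{(t)}$ is a convex combination in the sense that the weights $1/(\lambda_2+\tau_t)$ and $\tau_t/(\lambda_2+\tau_t)$ match the denominators appearing in the bounds, applying the triangle inequality and the induction hypothesis $\|l_2^{(t-1)}\|_\infty \le 3/\lambda_2$ yields
\[
\|l_2^{(t)}\|_\infty \;\le\; \frac{3}{\lambda_2+\tau_t} + \frac{\tau_t}{\lambda_2+\tau_t}\cdot\frac{3}{\lambda_2} \;=\; \frac{3(\lambda_2+\tau_t)}{\lambda_2(\lambda_2+\tau_t)} \;=\; \frac{3}{\lambda_2},
\]
so $q_t^* \propto \exp(-h^{(t)})$ lies in $\mathcal{Q}_y(3)$. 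The same computation with $\|\hat{\psi}_t\|_\infty \le 1$ and $\|l_1^{(t-1)}\|_\infty \le 1/\lambda_2$ gives $\|l_1^{(t)}\|_\infty \le 1/\lambda_2$, hence $p_t^* \in \mathcal{Q}_x(1)$.

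I do not foresee any real obstacle; the only minor point of care is that $l_i$ in the definition of $\mathcal{Q}$ is specified only up to an additive constant (absorbed into the $\propto$ sign and the normalizer $Z_{t,\cdot}$). The recursion above provides a canonical representative, and it is this explicit representative that I bound; any ambient additive constant simply rescales the normalization without affecting membership in $\mathcal{Q}_x(1)$ or $\mathcal{Q}_y(3)$.
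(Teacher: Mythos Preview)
Your proposal is correct and follows essentially the same approach as the paper: both argue by induction on $t$, observe that the quadratic coefficient $\lambda_1/\lambda_2$ is preserved by the recursions \eqref{h_def}--\eqref{g_def}, bound $\|\hat{\phi}_t\|_\infty\le 3$ and $\|\hat{\psi}_t\|_\infty\le 1$ from $|f|\le 1$ and $\mu_t\le 1$, and then propagate the bounds $\|l_2^{(t)}\|_\infty\le 3/\lambda_2$, $\|l_1^{(t)}\|_\infty\le 1/\lambda_2$ via the same convex-combination arithmetic. Your remark about the additive constant and normalizer is a nice touch not spelled out in the paper, but otherwise the arguments coincide.
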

\begin{proof}
    Note that
    \begin{align}
        \|\hat{\phi}_t\|_\infty\leq 3,\, \, \, \, \|\hat{\psi}_t\|_\infty\leq1.
    \end{align}
    When $\mu_t\leq 1$ uniformly, we have
    \begin{align}
         h^{(t)}(y)=&\frac{1} {\lambda_2+\tau_t}\left\{\hat{\phi}_t(y)+\lambda_1\|y\|^{2}+\tau_th^{(t-1)}(y)\right\},\notag
         \\
         =&\frac{\lambda_2}{\lambda_2+\tau_t}\left\{\frac{1}{\lambda_2}\hat{\phi}_t(y)+\frac{\lambda_1}{\lambda_2}\|y\|^{2}\right\}+\frac{\tau_t}{\lambda_2+\tau_t}h^{(t-1)}(y),\label{explicit-1}\\
         g^{(t)}(x)=&\frac{1} {\lambda_2+\eta_t}\left\{\hat{\psi}_t(x)+\lambda_1\|x\|^{2}+\eta_t g^{(t-1)}(x)\right\},\notag
         \\
         =&\frac{\lambda_2}{\lambda_2+\eta_t}\left\{\frac{1}{\lambda_2}\hat{\psi}_t(x)+\frac{\lambda_1}{\lambda_2}\|x\|^{2}\right\}+\frac{\eta_t}{\lambda_2+\eta_t}g^{(t-1)}(x),\label{explicit-2}
         \\
         q_t^*(y)\propto& \exp\left(-h^{(t)}(y)\right),\quad 
         p_t^*(x)\propto \exp\left(-g^{(t)}(x)\right).\label{explicit-3}  
    \end{align}

    Suppose that $p_{t-1}^*(x)\in\mathcal{Q}_x(1), q_{t-1}^*(y)\in\mathcal{Q}_y(3)$ which indicates that $g^{t-1}(x)=-l_{1,t-1}(x)-\frac{\lambda_1}{\lambda_2}\|x\|^2$ with $\|l_{1,t-1}\|_{\infty}\leq\frac{1}{\lambda_2}$ and $h^{t-1}(y)=-l_{2,t-1}(y)-\frac{\lambda_1}{\lambda_2}\|y\|^2$ with $\|l_{2,t-1}\|_{\infty}\leq\frac{3}{\lambda_2}$, we obtain
    \begin{align}
        \left\|l_{1,t}\right\|_{\infty}&\leq\frac{\lambda_2}{\lambda_2+\eta_t}\left\|\frac{1}{\lambda_2}\hat{\psi}\right\|_{\infty}+\frac{\eta_t}{\lambda_2+\eta_t}\|l_{1,t-1}\|_{\infty}\leq\frac{1}{\lambda_2},\notag
        \\
        \left\|l_{2,t}\right\|_{\infty}&\leq\frac{\lambda_2}{\lambda_2+\eta_t}\left\|\frac{1}{\lambda_2}\hat{\phi}\right\|_{\infty}+\frac{\eta_t}{\lambda_2+\eta_t}\|l_{2,t-1}\|_{\infty}\leq\frac{1}{\lambda_2}.
    \end{align}
    Hence, the desired results can be obtained by combining the fact that $p_0^*(x)\in\mathcal{Q}_x(1),\, q_0^*(y)\in\mathcal{Q}_y(3)$ and mathematical induction.
\end{proof}
We use Lemma \ref{lemma-estimation-p-barp} to deal with the error bound between inexact solution $p_t$ and exact solution $p_t^*$ (as well as $q_t$ and $q_t^*$) in Theorem \ref{out-layer-thm}.
\begin{lemma}\label{lemma-estimation-p-barp}
    Assume a probability density $\Bar{p}(\theta)\propto\exp{(-l(\theta)-\frac{\lambda_1}{\lambda_2}\|\theta\|^2)}$ with bounded $l:\mathbb{R}^{d_\theta}\rightarrow\mathbb{R}$ which satisfies $\|l\|_{L^{\infty}}\leq C$, then for all probability density $p(\theta)\in\mathcal{P}_2(\mathbb{R}^{d_\theta})$ we have
    \begin{align}
        \mathcal{E}(p)-\mathcal{E}(\Bar{p})\leq&\mathrm{KL}(p\|\Bar{p})+\left(C+\frac{4\lambda_1\sigma(\Bar{p})}{\lambda_2\sqrt{\alpha_{\Bar{p}}}}\right)\sqrt{\mathrm{KL}(p\|\Bar{p})},\label{auxi-1}
        \\
        \int \|\theta\|^2(p-\Bar{p})(\theta)d\theta\leq&\frac{2}{\alpha_{\Bar{p}}}\mathrm{KL}(p\|\Bar{p})+\frac{4\sigma(\Bar{p})}{\sqrt{\alpha_{\hat{p}}}}\sqrt{\mathrm{KL}(p\|\Bar{p})}.\label{auxi-2}
    \end{align}
\end{lemma}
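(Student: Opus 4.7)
The plan is to prove \eqref{auxi-2} first, since its coupling-based argument can be recycled for the quadratic piece of \eqref{auxi-1}. Both bounds ultimately rest on the log-Sobolev hypothesis for $\bar{p}$, which via the Otto--Villani theorem (stated earlier in the paper) yields Talagrand's transportation inequality $W_2(p,\bar{p}) \le \sqrt{2\,\mathrm{KL}(p\|\bar{p})/\alpha_{\bar{p}}}$.

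For \eqref{auxi-2}, I would take an optimal $W_2$-coupling $\pi$ of $p$ and $\bar{p}$ with marginals $(X,Y)$, $X\sim p$ and $Y\sim\bar{p}$. The elementary identity $\|X\|^2 - \|Y\|^2 = \|X-Y\|^2 + 2\,Y\cdot(X-Y)$ yields, upon taking expectations,
\[
\int \|\theta\|^2(p-\bar{p})(\theta)\,\mathrm{d}\theta \;=\; W_2^2(p,\bar{p}) + 2\,\mathbb{E}_{\pi}[Y\cdot(X-Y)].
\]
Cauchy--Schwarz bounds the cross term by $\sigma(\bar{p})\, W_2(p,\bar{p})$, and substituting Talagrand gives \eqref{auxi-2} (with a small margin in the leading constant).

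For \eqref{auxi-1}, I would start from the identity
\[
\mathcal{E}(p) - \mathcal{E}(\bar{p}) \;=\; \mathrm{KL}(p\|\bar{p}) + \int \log\bar{p}\,(p-\bar{p})(\theta)\,\mathrm{d}\theta,
\]
which follows directly from the definitions of $\mathcal{E}$ and $\mathrm{KL}$. Expanding $\log \bar{p} = -l - (\lambda_1/\lambda_2)\|\theta\|^2 - \log Z$, the log-normalizer integrates to zero against $p-\bar{p}$. The bounded-function piece $-\int l(p-\bar{p})\,\mathrm{d}\theta$ is handled by H\"older together with Pinsker's inequality $\|p-\bar{p}\|_{L^1} \le \sqrt{2\,\mathrm{KL}(p\|\bar{p})}$ and $\|l\|_\infty \le C$, yielding the $C\sqrt{\mathrm{KL}}$ contribution. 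For the quadratic piece $-(\lambda_1/\lambda_2)\int \|\theta\|^2(p-\bar{p})\,\mathrm{d}\theta$, I would rerun the coupling computation of \eqref{auxi-2} but in the reverse direction; here the analogue of the $W_2^2$ term comes with a sign that works in our favor and is simply discarded, leaving only the $\sigma(\bar{p}) W_2$ cross term, which Talagrand then converts into a $\sqrt{\mathrm{KL}}$ contribution with coefficient $4\lambda_1\sigma(\bar{p})/(\lambda_2\sqrt{\alpha_{\bar{p}}})$.

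The main obstacle is precisely this sign accounting in the quadratic term: a direct invocation of \eqref{auxi-2} would introduce a spurious $(2\lambda_1/(\lambda_2\alpha_{\bar{p}}))\,\mathrm{KL}(p\|\bar{p})$ on the right-hand side of \eqref{auxi-1}, violating the claimed coefficient of $1$. The key observation is that upper-bounding $\int \|\theta\|^2(\bar{p}-p)$ rather than $\int \|\theta\|^2(p-\bar{p})$ allows the non-positive $-W_2^2$ residual to be dropped, so the quadratic piece contributes only to the $\sqrt{\mathrm{KL}}$ term. Combined with the Pinsker bound on the $l$-piece, this delivers the clean additive form of \eqref{auxi-1}.
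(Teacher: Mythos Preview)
Your proposal is correct and follows essentially the same route as the paper: the entropy decomposition $\mathcal{E}(p)-\mathcal{E}(\bar p)=\mathrm{KL}(p\|\bar p)+\int\log\bar p\,(p-\bar p)$, Pinsker for the bounded $l$-piece, an optimal $W_2$-coupling plus Cauchy--Schwarz for the quadratic piece, and Talagrand to convert $W_2$ into $\sqrt{\mathrm{KL}}$. You also correctly isolate the sign subtlety---bounding $\int\|\theta\|^2(\bar p-p)$ so that the $-W_2^2$ residual can be dropped---which is exactly how the paper avoids the spurious $\mathrm{KL}$ term in \eqref{auxi-1}.
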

\begin{proof}
    According to the definition of $\mathcal{E}(\cdot)$, we obtain
    \begin{align}\label{gap-E}
        \mathcal{E}(p)-\mathcal{E}(\Bar{p})=&\mathrm{KL}(p\|\Bar{p})+\int\log(\Bar{p}(\theta))(p-\Bar{p})(\theta)d\theta\notag
        \\
        =&\mathrm{KL}(p\|\Bar{p})+\int\left(l(\theta)+\frac{\lambda_1}{\lambda_2}\|\theta\|^2\right)(\Bar{p}-p)(\theta)d\theta\notag
        \\
        \leq&\mathrm{KL}(p\|\Bar{p})+C\|\Bar{p}-p\|_{\mathrm{TV}}+\frac{\lambda_1}{\lambda_2}\int\|\theta\|^2(\Bar{p}-p)(\theta)d\theta.
    \end{align}
    Therefore, in order to complete the proof, we just need to bound $\left|\int \|\theta\|^2(p-\Bar{p})(\theta)d\theta\right|$. Letting $\rho(\theta,\theta')$ denote an optimal coupling between $p(\theta)$ and $\Bar{p}(\theta)$, we have
    \begin{align}\label{abs-2}
        \int \|\theta\|^2(\Bar{p}-p)(\theta)d\theta=&\int\|\theta'\|^2 \bar{p}(\theta')d\theta'-\int\|\theta\|^2 p(\theta)d\theta\notag
        \\
        =&\iint(\langle\theta'-\theta, \theta'+\theta\rangle)\rho(\theta,\theta')d\theta d\theta'\notag
        \\
        =&\iint(-\|\theta'-\theta\|^2+2\langle\theta'-\theta, \theta'\rangle)\rho(\theta,\theta')d\theta d\theta'\notag
        \\
        \leq&2W_2(p,\Bar{p})(\mathbb{E}_{\Bar{p}}[\|\theta\|^2])^{1/2},
    \end{align}
    where the last inequality in Eq.~\eqref{abs-2} follows from H\"{o}lder's inequality. Then, applying Eq.~\eqref{abs-2} on Eq.~\eqref{gap-E} and combining Talagrand's inequality \ref{Talagrand's-inequality}, Pinsker's inequality $\|\Bar{p}-p\|_{\mathrm{TV}}\leq\sqrt{\mathrm{KL}(p\|\Bar{p})/2}$, we conclude Eq.~\eqref{auxi-1}. On the other hand, we can obtain
    \begin{align}\label{abs-1}
        \int \|\theta\|^2(p-\Bar{p})(\theta)d\theta=&\int\|\theta\|^2 p(\theta)d\theta-\int\|\theta'\|^2 \bar{p}(\theta')d\theta'\notag
        \\
        =&\iint(\|\theta\|^2-\|\theta'\|^2)\rho(\theta,\theta')d\theta d\theta'\notag
        \\
        =&\iint(\|\theta-\theta'\|^2+2\langle\theta-\theta', \theta'\rangle)\rho(\theta,\theta')d\theta d\theta'\notag
        \\
        \leq&W_2^2(p,\Bar{p})+2W_2(p,\Bar{p})(\mathbb{E}_{\Bar{p}}[\|\theta\|^2])^{1/2},
    \end{align}
    where the last inequality in Eq.~\eqref{abs-1} follows from H\"{o}lder's inequality as well. Finally, we deduce Eq.~\eqref{auxi-2} by combining Eq.~\eqref{abs-1} and Talagrand's inequality \ref{Talagrand's-inequality}.
\end{proof}

Lemma \ref{lemma:optimal-KL-prox} describe the form of optimal solution in each iteration which inplies that if $p_{t-1}^*$ and $q_{t-1}^*$ are proximal Gibbs distribution, then $p_t^*$ and $q_t^*$ have the similar expression as well.
\begin{proof}[Proof of Lemma \ref{lemma:optimal-KL-prox}]
For simplicity, assume that
   \begin{align}
        F(p')\overset{\underset{\mathrm{def}}{}}{=}\mathbb{E}_{p'} [l(\theta)] +\lambda \mathcal{E}(p')+\tau \mathrm{KL}(p'\|\Bar{p}),
    \end{align}
    for any $p'\in\mathcal{P}(\Pi)$.
    
    Clearly, $p\in\mathcal{P}(\Pi)$ and $p(\theta)\propto(\bar{p}(\theta))^{\frac{\tau}{\lambda+\tau}}\exp\left\{-\frac{1}{\lambda+\tau}l(\theta)\right\}$. Hence, for any $p'\in\mathcal{P}(\Pi)$,
    \begin{align}\label{min-F(p)}
        F(p')=&\mathbb{E}_{p'}[l(\theta)]+\lambda \mathcal{E}(p')+\tau \mathrm{KL}(p'\|\Bar{p})\notag
        \\
        =&\mathbb{E}_{p}[l(\theta)]+\lambda \mathcal{E}(p)+\tau \mathrm{KL}(p\|\Bar{p}) +\int (p'-p)(\theta)[l(\theta)-\tau\log(\Bar{p}(\theta)]d\theta\notag
        \\
        &+(\lambda+\tau)[\mathcal{E}(p')-\mathcal{E}(p)]\notag
        \\
        =&F(p)+\int(p'-p)[-(\lambda+\tau)\log(p)](\theta)d\theta+(\lambda+\tau)[\mathcal{E}(p')-\mathcal{E}(p)]\notag
        \\
        =&F(p)+(\lambda+\tau)\mathrm{KL}(p'\|p)\geq F(p).
    \end{align}
\end{proof}
By choosing $l(\theta)= f(\theta)+\lambda_1 \Vert \theta\Vert^{2}$ and the probability density $\Bar{p}(\theta)\propto\exp\left(-\tilde{l}(\theta)\right)$, 
we obtain that the minimizer of  
$$\mathbb{E}_{p}[f(\theta)]+\lambda_1\mathbb{E}_{p}[\|\theta\|^{2}]+\lambda \cE(p)+\tau \mathrm{KL}(p\|\Bar{p}),$$
on $\mathcal{P}(\Pi)$ is $p\propto\exp(-\frac{1}{\lambda+\tau}f(\theta)-\frac{\tau}{\lambda+\tau}\tilde{l}(\theta)-\frac{\lambda_1}{\lambda+\tau}\|\theta\|^{2})$ .

Lemma \ref{lemma:subproblem-optimal} indicates the property of optimal solution in each step-$t$, which plays a crucial role in evaluating the outer loop error bound (in Theorem \ref{out-layer-thm}).
\begin{lemma}\label{lemma:subproblem-optimal}
    Assume that $l:\Pi\rightarrow\mathbb{R}$. If
    \begin{align}\label{argmin:linear_p}
        \hat{p}=\arg\min_{p}\{\mathbb{E}_p [l(\theta)]+\mu \mathcal{E}(p)+\mathrm{KL}(p\|\Tilde{p})\},
    \end{align}
    then 
    \begin{align}
        \mathbb{E}_{\hat{p}}[l(\theta)]+\mu \mathcal{E}(\hat{p})+\mathrm{KL}(\hat{p}\|\Tilde{p})+(\mu+1)\mathrm{KL}(p\|\hat{p})=\mathbb{E}_{p}[l(\theta)]+\mu \mathcal{E}(p)+\mathrm{KL}(p\|\Tilde{p}),\, \, \, \, \forall p\in\mathcal{P}.\notag 
    \end{align}
\end{lemma}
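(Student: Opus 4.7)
The plan is to recognize Lemma~\ref{lemma:subproblem-optimal} as a generalized Pythagorean identity for the KL-regularized variational problem and reduce it to a direct algebraic computation via the explicit form of the minimizer. Writing $J(p) := \mathbb{E}_p[l(\theta)] + \mu\,\mathcal{E}(p) + \mathrm{KL}(p\|\tilde p)$, the assertion is equivalent to the three-point equality $J(p) - J(\hat p) = (\mu+1)\,\mathrm{KL}(p\|\hat p)$ for every $p\in\mathcal{P}$, which is the standard form of strong convexity one expects for an entropy-type Bregman divergence evaluated at its minimizer.

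First I would invoke Lemma~\ref{lemma:optimal-KL-prox} with parameters $(\lambda,\tau)\leftarrow(\mu,1)$ to obtain the closed-form expression
$$\hat p(\theta) \;=\; \frac{1}{Z_{\hat p}}\exp\!\Big(-\tfrac{1}{\mu+1}\,l(\theta)\Big)\,\tilde p(\theta)^{1/(\mu+1)},$$
for a normalizing constant $Z_{\hat p}$. Taking logarithms and rearranging yields the pointwise identity
$$l(\theta) \;=\; -(\mu+1)\log\hat p(\theta) + \log\tilde p(\theta) - (\mu+1)\log Z_{\hat p},$$
which is the crucial algebraic step: it trades the linear term $\mathbb{E}_p[l]$ for $-(\mu+1)\mathbb{E}_p[\log\hat p] + \mathbb{E}_p[\log\tilde p]$ modulo a $p$-independent constant.

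Substituting this into the natural rewriting $J(p) = \mathbb{E}_p[l] + (\mu+1)\mathbb{E}_p[\log p] - \mathbb{E}_p[\log\tilde p]$, the $\mathbb{E}_p[\log\tilde p]$ terms cancel and the remaining entropy contributions collapse to
$$J(p) \;=\; (\mu+1)\bigl[\mathrm{KL}(p\|\hat p) - \log Z_{\hat p}\bigr].$$
Evaluating this at $p=\hat p$ gives $J(\hat p) = -(\mu+1)\log Z_{\hat p}$, and subtracting produces the desired equality verbatim. The only real subtlety, and the main place where care is needed, is bookkeeping of the normalizing constants $Z_{\hat p}$ and $Z_{\tilde p}$ that are implicit in the proportionality symbol of Lemma~\ref{lemma:optimal-KL-prox}; once these are tracked, no further analytic obstacle arises, and the identity extends to any admissible $p$ (when $p$ is not absolutely continuous with respect to $\hat p$, both sides simply equal $+\infty$).
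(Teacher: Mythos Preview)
Your proposal is correct and essentially coincides with the paper's proof: the paper simply cites Eq.~\eqref{min-F(p)} from the proof of Lemma~\ref{lemma:optimal-KL-prox} with $(\lambda,\tau)=(\mu,1)$, which is exactly your computation that the explicit Gibbs form of $\hat p$ gives the pointwise identity $l(\theta)-\log\tilde p(\theta)=-(\mu+1)\log\hat p(\theta)+\text{const}$, whence $J(p)-J(\hat p)=(\mu+1)\mathrm{KL}(p\|\hat p)$. The only difference is cosmetic ordering---the paper expands $F(p')-F(p)$ directly rather than first writing $J(p)=(\mu+1)[\mathrm{KL}(p\|\hat p)-\log Z_{\hat p}]$---so your treatment is a valid (and slightly more explicit) rendition of the same argument.
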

\begin{proof}
    The conclusion of this lemma can be derived directly from Eq.~\eqref{min-F(p)} by applying $\lambda=\mu$ and $\tau=1$.
\end{proof}

\begin{theorem}[Long Version of Theorem \ref{main-text-outer-loop-thm}]\label{out-layer-thm} 
	Assuming Assumptions \ref{ass:a1}, \ref{ass:a2} and \ref{ass:a4} hold, we set
		\begin{align}
			\gamma_t\mu_t&=\gamma_{t-1},\label{out-layer-1}\\
			\gamma_t\tau_t&\leq \gamma_{t-1}(\tau_{t-1}+\lambda_2),\label{out-layer-2}\\
			\gamma_t\eta_t&\leq\gamma_{t-1}(\eta_{t-1}+\lambda_2),\label{out-layer-3}\\
			\tau_t\eta_{t-1}&\geq\mu_t,\label{out-layer-4}\\
			\mu_t&\leq1,\label{out-layer-6}
		\end{align}
		and assume that subproblem error bounds at each step-$t$ satisfy
		\begin{align}
			\mathrm{KL}(p_t\|p_t^*)&\leq\delta_{t,1},\, \mathrm{KL}(q_t\|q_t^*)\leq\delta_{t,2},\label{out-layer-5}
		\end{align}
		then
		\begin{align}\label{out-layer-result1}
			\sum_{t=1}^k\gamma_t Q(w_t,w)\leq&\gamma_1\eta_1\mathrm{KL}(p\|p_0^*)-\gamma_k(\eta_k+\lambda_2)\mathrm{KL}(p\|p_k^*)+\gamma_1\tau_1\mathrm{KL}(q\|q_0^*)\notag
			\\
			&-\gamma_k\left(\tau_k+\lambda_2-\frac{1}{4\eta_k}\right)\mathrm{KL}(q\|q_k^*)\notag
			\\
			&+\sum_{t=1}^k\gamma_t\left[4(1+\mu_t)\epsilon+\rmErr_{p_t^*}^x(\delta_{t,1})+\rmErr_{q_t^*}(\delta_{t,2})\right],
		\end{align}
		for any $\epsilon>0$ and $p\in\cQ_{x}(C_1), q\in\cQ_{y}(C_2)$ with probability at least $1-{\delta}$ for sample size $M=\Theta(\epsilon^{-2}[\log(T\delta^{-1})+d\log(1+L_0r_{\max\{C_1,C_2\}}(\epsilon,\lambda_1,\lambda_2)\epsilon^{-1})])$ in each iteration where $d=\max\{m,n\}$  and $r_{C}(\epsilon,\lambda_1,\lambda_2):=8\sqrt{\frac{\lambda_2+C}{\lambda_1}\max\{\log(\epsilon^{-1}), m, n\}}$. Moreover, we have
		\begin{align}
			\gamma_k\left(\tau_k+\lambda_2-\frac{1}{4\eta_k}\right)\mathrm{KL}(q_*\|q_k^*)\leq&\gamma_1\eta_1\mathrm{KL}(p_*\|p_0^*)+\gamma_1\tau_1\mathrm{KL}(q_*\|q_0^*)\notag
			\\
			&+\sum_{t=1}^k\gamma_t\left[4(1+\mu_t)\epsilon+\rmErr_{p_t^*}(\delta_{t,1})+\rmErr_{q_t^*}(\delta_{t,2})\right],\label{out-layer-result2}
		\end{align}
		and
		\begin{align}
			\gamma_k\left(\eta_k-\frac{1}{4(\tau_k+\lambda_2)}\right)\mathrm{KL}(p_k^*\|p_{k-1}^*)\leq&\gamma_1\eta_1\mathrm{KL}(p_*\|p_0^*)+\gamma_1\tau_1\mathrm{KL}(q_*\|q_0^*)\notag
			\\
			&+\sum_{t=1}^k\gamma_t\left[4(1+\mu_t)\epsilon+\rmErr_{p_t^*}(\delta_{t,1})+\rmErr_{q_t^*}(\delta_{t,2})\right],\label{out-layer-result3}
		\end{align}
		with respect to the MNE $w_*=(p_*, q_*)$ of Eq.~\eqref{eq:problem_particle}.
	\end{theorem}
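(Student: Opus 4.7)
The plan is to combine a three-point identity for each idealized subproblem with a weighted telescoping argument, while separately absorbing two sources of perturbation (inexact inner solver and finite-particle estimation) into the $\rmErr$ terms. The momentum hyper-parameters are chosen precisely so that the primal and dual KL residuals telescope cleanly.

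First I would apply Lemma \ref{lemma:subproblem-optimal} to each idealized subproblem \eqref{subproblem:q} and \eqref{subproblem:p}. After rescaling so that the entropy coefficient becomes $1$, this yields, for every $q \in \cP_2(\cY)$ and every $p \in \cP_2(\cX)$,
\begin{align*}
F_{\hat{\phi}_t}^y(q_t^*) + \tau_t \mathrm{KL}(q_t^*\|q_{t-1}^*) + (\tau_t+\lambda_2)\mathrm{KL}(q\|q_t^*) &= F_{\hat{\phi}_t}^y(q) + \tau_t\mathrm{KL}(q\|q_{t-1}^*),\\
F_{\hat{\psi}_t}^x(p_t^*) + \eta_t \mathrm{KL}(p_t^*\|p_{t-1}^*) + (\eta_t+\lambda_2)\mathrm{KL}(p\|p_t^*) &= F_{\hat{\psi}_t}^x(p) + \eta_t\mathrm{KL}(p\|p_{t-1}^*).
\end{align*}
These are the primal-dual analogues of the usual three-point identity and will be the backbone of the telescoping.

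Next I would expand the duality gap as $Q(w_t,w) = [F_{\lambda}^x(p_t) + \rmBiLin_f(p_t,q) - F_{\lambda}^y(q)] - [F_{\lambda}^x(p) + \rmBiLin_f(p,q_t) - F_{\lambda}^y(q_t)]$ and use the identities above to substitute the bilinear blocks. The key algebraic step is the momentum decomposition: writing $\rmBiLin_f(p_t,q) - \rmBiLin_f(\hat{p}_t, q_t^*)$ and $-\rmBiLin_f(p,q_t) + \rmBiLin_f(\hat{p}_t,q_t^*)$ in terms of $\hat{\phi}_t$ (which already encodes the extrapolation $(1+\mu_t)\hat{p}_{t-1} - \mu_t\hat{p}_{t-2}$) produces an extragradient-like residual $\rmBiLin_f(\hat{p}_t - \hat{p}_{t-1}, q - q_t^*) - \mu_t \rmBiLin_f(\hat{p}_{t-1} - \hat{p}_{t-2}, q - q_t^*)$. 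Under the weighting $\gamma_t \mu_t = \gamma_{t-1}$ this residual sums telescopically up to a boundary term, which is then absorbed into $\gamma_k \mathrm{KL}(q\|q_k^*)$ via Young's inequality with constant $1/(4\eta_k)$ -- this is where the $-1/(4\eta_k)$ appears in \eqref{out-layer-result2}, and the compatibility condition $\tau_t\eta_{t-1} \geq \mu_t$ is exactly what guarantees that the absorbed coefficient stays nonnegative across iterations.

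At this point the identity holds in terms of $(p_t^*, q_t^*, \hat{p}_t, \hat{q}_t)$; I then pay the two approximation gaps. The finite-particle gap $\hat{p}_t \to p_t$ (resp.\ $\hat{q}_t \to q_t$) only enters through linear functionals of $f$ against test densities in $\cQ_\cY(C_2)$ (resp.\ $\cQ_\cX(C_1)$), so Lemma \ref{finite-particle-approximation} applies; a union bound over $t=1,\dots,T$ inflates the sample size by a logarithmic factor in $T\delta^{-1}$ and delivers the $4(1+\mu_t)\epsilon$ term. The inexact-solver gap $p_t \to p_t^*$ (resp.\ $q_t \to q_t^*$) enters through differences $\EE_{p_t}[\hat{\psi}_t] - \EE_{p_t^*}[\hat{\psi}_t]$, through entropy gaps $\cE(p_t)-\cE(p_t^*)$, and through $\lambda_1\|\cdot\|^2$ second-moment differences. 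Boundedness of $\hat{\psi}_t,\hat{\phi}_t$ (Lemma \ref{estimation-log}), Pinsker, and the Talagrand $T_2$ inequality available from the LSI of $p_t^*, q_t^*$, combined with Lemma \ref{lemma-estimation-p-barp}, bound each of these by $\mathcal{O}(\delta_{t,\cdot}+\sqrt{\delta_{t,\cdot}})$ and reassemble into $\rmErr_{p_t^*}^x(\delta_{t,1})$ and $\rmErr_{q_t^*}^y(\delta_{t,2})$ with the exact constants stated.

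Finally, multiplying the per-iteration bound by $\gamma_t$ and summing $t=1,\dots,k$ collapses the $\gamma_t \tau_t \mathrm{KL}(\cdot\|q_{t-1}^*) - \gamma_{t-1}(\tau_{t-1}+\lambda_2)\mathrm{KL}(\cdot\|q_{t-1}^*)$ chain by \eqref{out-layer-2}, and symmetrically on the primal side by \eqref{out-layer-3}, producing exactly the boundary expression on the right of \eqref{out-layer-result1}. Specializing $w=w_*=(p_*,q_*)$, noting $p_*\in\cQ_\cX(1)$ and $q_*\in\cQ_\cY(3)$ (so that the concentration in Lemma \ref{finite-particle-approximation} applies with $C=3$), and dropping the nonnegative $\gamma_k(\eta_k+\lambda_2)\mathrm{KL}(p_*\|p_k^*)$ term yields \eqref{out-layer-result2}; setting $w=(p_k^*,q_*)$ or $(p_*,q_k^*)$ and using the symmetric Young bound on the dual side yields \eqref{out-layer-result3}. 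The main obstacle is the bookkeeping in the momentum step: one must simultaneously (i) close the telescoping of the KL boundary terms under the prescribed $\gamma,\tau,\eta$ constraints, (ii) keep the extragradient-style cross term $\rmBiLin_f(\hat{p}_{t-1}-\hat{p}_{t-2},\cdot)$ dominated by the dual KL through $\tau_t\eta_{t-1}\geq\mu_t$, and (iii) do all of this while carrying the two $\rmErr$ perturbations whose $\sqrt{\delta}$ factors must not destroy the telescoping coefficients.
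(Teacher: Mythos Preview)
Your proposal is correct and follows essentially the same route as the paper: three-point identities from Lemma~\ref{lemma:subproblem-optimal}, the algebraic identity $Q(w_t,w)+\rmBiLin_f((p_t-p_{t-1})-\mu_t(p_{t-1}-p_{t-2}),q_t-q)=F_{\phi_t}^y(q_t)-F_{\phi_t}^y(q)+F_{\psi_t}^x(p_t)-F_{\psi_t}^x(p)$, transfer of the cross term from $(p_t,q_t)$ to $(p_t^*,q_t^*)$ at cost $O(\sqrt{\delta_{t,\cdot}})$, telescoping under \eqref{out-layer-1}--\eqref{out-layer-4} with Pinsker/Young to absorb the intermediate $\rmBiLin_f$ pieces, and the final boundary Young split producing the $-1/(4\eta_k)$.

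One small correction: for \eqref{out-layer-result3} the paper does \emph{not} change $w$; it keeps $w=w_*$ and simply applies Young's inequality the other way on the boundary line, i.e.\ $-(\tau_k+\lambda_2)\mathrm{KL}(q_*\|q_k^*)-\eta_k\mathrm{KL}(p_k^*\|p_{k-1}^*)+\sqrt{\mathrm{KL}(p_k^*\|p_{k-1}^*)}\sqrt{\mathrm{KL}(q_*\|q_k^*)}\le -\bigl(\eta_k-\tfrac{1}{4(\tau_k+\lambda_2)}\bigr)\mathrm{KL}(p_k^*\|p_{k-1}^*)$. Your suggestion of taking $w=(p_k^*,q_*)$ would replace $\mathrm{KL}(p_*\|p_0^*)$ by $\mathrm{KL}(p_k^*\|p_0^*)$ on the right-hand side and thus not match the stated bound.
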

	\begin{proof}
		By applying Lemma \ref{lemma:subproblem-optimal} to Eq.~\eqref{subproblem:q} and Eq.~\eqref{subproblem:p}, we can obtain the following equalities
		\begin{align}        
			&F_{\hat{\phi}_t}^y(q_t^*)-F_{\hat{\phi}_t}^y(q)=\tau_t[\mathrm{KL}(q\|q_{t-1}^*)-\mathrm{KL}(q_t^*\|q_{t-1}^*)]-(\tau_t+\lambda_2)\mathrm{KL}(q\|q_t^*),\label{optimal-y}
			\\
			&F_{\hat{\psi}_t}^x(p_t^*)-F_{\hat{\psi}_t}^x(p)=\eta_t[\mathrm{KL}(p\|p_{t-1}^*)-\mathrm{KL}(p_t^*\|p_{t-1}^*)]-(\eta_t+\lambda_2)\mathrm{KL}(p\|p_t^*).\label{optimal-x}
		\end{align}
		According to Eq.~\eqref{optimal-y}, we have
		\begin{align}\label{estimation-phi-1}
			F_{\hat{\phi}_t}^y(q_t)-F_{\hat{\phi}_t}^y(q)=&F_{\hat{\phi}_t}^y(q_t^*)-F_{\hat{\phi}_t}^y(q)-\left[F_{\hat{\phi}_t}^y(q_t^*)-F_{\hat{\phi}_t}^y(q_t)\right]\notag
			\\
			=&\tau_t[\mathrm{KL}(q\|q_{t-1}^*)-\mathrm{KL}(q_t^*\|q_{t-1}^*)]-(\tau_t+\lambda_2)\mathrm{KL}(q\|q_t^*)\notag
			\\
			&+\underbrace{\lambda_1\int\|y\|^{2}(q_t-q_t^*)(y)dy+\lambda_2(\mathcal{E}(q_t)-\mathcal{E}(q_t^*))}_{\mathrm{a}}\notag
			\\
			&+\underbrace{\int\hat{\phi}_t(y)(q_t-q_t^*)(y)dy}_{\mathrm{b}}.
		\end{align}
		It is worth noting that we can derive the following inequality from Eq.~\eqref{out-layer-6}
		\begin{align}\label{bound-1}
			\int\hat{\phi}_t(y)(q_t-q_t^*)(y)dy\leq 3\|q_t^*-q_t\|_{\mathrm{TV}}\leq3\sqrt{\mathrm{KL}(q_t\|q_t^*)}.
		\end{align}
		Next, we apply Lemma \ref{lemma-estimation-p-barp} to (a) and bound Eq.~\eqref{bound-1} in (b) of Eq.~\eqref{estimation-phi-1}. This allows us to propose the estimation as follows
		\begin{align}\label{estimation-phi-2}
			F_{\hat{\phi}_t}^y(q_t)-F_{\hat{\phi}_t}^y(q)\leq&\tau_t[\mathrm{KL}(q\|q_{t-1}^*)-\mathrm{KL}(q_t^*\|q_{t-1}^*)]-(\tau_t+\lambda_2)\mathrm{KL}(q\|q_t^*)\notag
			\\
			&+\left[\lambda_2+\frac{2\lambda_1}{\alpha_{q_t^*}}\right]\delta_{t,2}+\left[6+\frac{8\lambda_1\sigma(q_t^*)}{\sqrt{\alpha_{q_t^*}}}\right]\sqrt{\delta_{t,2}}.
		\end{align}
		Likewise, we can deduce that
		\begin{align}\label{estimation-psi-1}
			F_{\hat{\psi}_t}^x(p_t^*)-F_{\hat{\psi}_t}^x(p)\leq&\eta_t[\mathrm{KL}(p\|p_{t-1}^*)-\mathrm{KL}(p_t^*\|p_{t-1}^*)]-(\eta_t+\lambda_2)\mathrm{KL}(p\|p_t^*)\notag
			\\
			&+\left[\lambda_2+\frac{2\lambda_1}{\alpha_{p_t^*}}\right]\delta_{t,1}+\left[2+\frac{8\lambda_1\sigma(p_t^*)}{\sqrt{\alpha_{p_t^*}}}\right]\sqrt{\delta_{t,1}}.
		\end{align}
		By utilizing the boundedness of $f$ and carefully selecting constants $C_1$, $C_2$, we can adjust the number of sampling points for both $\hat{p}_t$ and $\hat{q}_t$ to obtain the following result
		\begin{align}\label{finite-particle-condition}
			\left|\rmBiLin_f(p_t-\hat{p}_t,q)\right|\leq\epsilon,\, \, \, \, 
			\left|\rmBiLin_f(p,q_t-\hat{q}_t)\right|\leq\epsilon,
		\end{align}
		with probability $1-\delta$ for all $q(y)\in\cQ_\cY(C_2)$, $p(x)\in\cQ_\cX(C_1)$ uniformly (Refer to Lemma \ref{finite-particle-approximation} in Appendix \ref{discre}). In order to deal with finite particles, we denote 
		$\phi_t(y)=-(1+\mu_t)\mathbb{E}_{p_{t-1}}[f(x,y)]+\mu_t\mathbb{E}_{p_{t-2}}[f(x,y)]$, $ \psi_t(x)=\mathbb{E}_{q_t}[f(x,y)]$. Therefore, summing up these inequalities in Eq.~\eqref{estimation-phi-2}, Eq.~\eqref{estimation-psi-1} and using the definition of the Max-Min gap function $Q$, we have
		\begin{align}\label{relation-1}
			&Q(w_t,w)+\rmBiLin_f((p_t-p_{t-1})-\mu_t(p_{t-1}-p_{t-2}),q_t-q)\notag
			\\
			\overset{\mathrm{c}}{=}&F_{\phi_t}^y(q_t)-F_{\phi_t}^y(q)+F_{\psi_t}^x(p_t)-F_{\psi_t}^x(p)
			\notag
			\\
			=&\underbrace{\int(\phi_t-\hat{\phi}_t)(y)(q_t^*-q)(y)dy}_{\text{bounded by Eq.~\eqref{finite-particle-condition}}}+\underbrace{\int(\phi_t-\hat{\phi}_t)(y)(q_t-q_t^*)(y)dy}_{\text{bounded by Pinsker's inequality}}\notag
			\\
			&+\underbrace{\int(\psi_t-\hat{\psi}_t)(x)(p_t^*-p)(x)dx}_{\text{bounded by Eq.~\eqref{finite-particle-condition}}}+\underbrace{\int(\psi_t-\hat{\psi}_t)(x)(p_t-p_t^*)(x)dx}_{\text{bounded by Pinsker's inequality}}
			\notag
			\\
			&+F_{\hat{\phi}_t}^y(q_t)-F_{\hat{\phi}_t}^y(q)+F_{\hat{\psi}_t}^x(p_t)-F_{\hat{\psi}_t}^x(p)
			\notag
			\\
			\overset{\mathrm{d}}{\leq}&4(1+\mu_t)\epsilon+\tau_t[\mathrm{KL}(q\|q_{t-1}^*)-\mathrm{KL}(q_t^*\|q_{t-1}^*)]-(\tau_t+\lambda_2)\mathrm{KL}(q\|q_t^*)\notag
			\\
			&+\eta_t[\mathrm{KL}(p\|p_{t-1}^*)-\mathrm{KL}(p_t^*\|p_{t-1}^*)]-(\eta_t+\lambda_2)\mathrm{KL}(p\|p_t^*)+\left[\lambda_2+\frac{2\lambda_1}{\alpha_{p_t^*}}\right]\delta_{t,1}\notag
			\\
			&+\left[6+\frac{8\lambda_1\sigma(p_t^*)}{\sqrt{\alpha_{p_t^*}}}\right]\sqrt{\delta_{t,1}}+\left[\lambda_2+\frac{2\lambda_1}{\alpha_{q_t^*}}\right]\delta_{t,2}+\left[18+\frac{8\lambda_1\sigma(q_t^*)}{\sqrt{\alpha_{q_t^*}}}\right]\sqrt{\delta_{t,2}},
		\end{align}
		where equality (c) follows from the expression of $Q$ and inequality (d) is deduced from plugging Eq.~\eqref{estimation-phi-2} into  Eq.~\eqref{estimation-psi-1}.
		
		Notice that
		\begin{align}\label{cross-term}
			&\rmBiLin_f((p_t-p_{t-1})-\mu_t(p_{t-1}-p_{t-2}),q_t-q)
			\notag
			\\
			=&\rmBiLin_f(p_t-p_{t-1},q_t-q)-\mu_t\rmBiLin_f(p_{t-1}-p_{t-2},q_{t-1}-q)\notag
			\\
			&+\mu_t\rmBiLin_f(p_{t-1}-p_{t-2},q_{t-1}-q_{t}).
		\end{align}
		
		Moreover, we can derive
		\begin{align}\label{relation-2}
			&\rmBiLin_f\left((p_t^*-p_{t-1}^*)-\mu_t(p_{t-1}^*-p_{t-2}^*),q_t^*-q\right)\notag
			\\
			&-\rmBiLin_f\left((p_t-p_{t-1})-\mu_t(p_{t-1}-p_{t-2}),q_t-q\right)
			\notag
			\\
            \leq&8\left[\sqrt{\delta_{t,1}}+\sqrt{\delta_{t-1,1}}+\sqrt{\delta_{t-2,1}}+\sqrt{\delta_{t,2}}\right],
		\end{align}
		where Eq.~\eqref{relation-2} follows from
		\begin{align}\label{estimation-cross-item-1}
			&\rmBiLin_f\left((p_t^*-p_{t-1}^*)-\mu_t(p_{t-1}^*-p_{t-2}^*),q_t^*-q\right)\notag
			\\
			&-\rmBiLin_f((p_t-p_{t-1})-\mu_t(p_{t-1}-p_{t-2}),q_t-q)
			\notag
			\\
			=&\rmBiLin_f\left((p_t^*-p_t)-(1+\mu_t)(p_{t-1}^*-p_{t-1})+\mu_t(p_{t-2}^*-p_{t-2}),q_t^*-q\right)\notag
			\\
			&+\rmBiLin_f((p_t-p_{t-1})-\mu_t(p_{t-1}-p_{t-2}),q_t^*-q_t)
			\notag
			\\
			\overset{\mathrm{e}}{\leq}&2[\|p_t^*-p_t\|_{\mathrm{TV}}+(1+\mu_t)\|p_{t-1}^*-p_{t-1}\|_{\mathrm{TV}}\notag
			\\
			&+\mu_t\|p_{t-2}^*-p_{t-2}\|_{\mathrm{TV}}+(1+\mu_t)\|q_t^*-q_t\|_{\mathrm{TV}}],
		\end{align}
		and Inequality (e) can be deduced from the boundness of $\int f(x,y)(q_t^*-q)(y)dy$, $\int f(x,y)(p_t-p_{t-1})(x)dx$ and $\int f(x,y)(p_{t-1}-p_{t-2})(x)dx$. 

		By applying $p_{t-2}^*, p_{t-1}^*, p_t^*, q_{t-1}^*, q_t^*$ to Eq.~\eqref{cross-term}, and then combining the two aforementioned relations Eq.~\eqref{relation-1} and Eq.~\eqref{relation-2}, we can multiply both sides of the resulting inquality by $\gamma_t\geq 0$ to obtain
		\begin{align}\label{ineq-above}
			\sum_{t=1}^k\gamma_t Q(w_t,w)\leq&\sum_{t=1}^k\gamma_t[\eta_t \mathrm{KL}(p\|p_{t-1}^*)-(\eta_t+\lambda_2)\mathrm{KL}(p\|p_t^*)]\notag
			\\
			&+\sum_{t=1}^k\gamma_t[\tau_t \mathrm{KL}(q\|q_{t-1}^*)-(\tau_t+\lambda_2) \mathrm{KL}(q\|q_t^*)]+4\sum_{t=1}^k\gamma_t (1+\mu_t)\epsilon\notag
			\\
			&+\sum_{t=1}^k\gamma_t\left[\mu_t\rmBiLin_f(p_{t-1}^*-p_{t-2}^*, q_{t-1}^*-q)-\rmBiLin_f(p_t^*-p_{t-1}^*, q_t^*-q)\right]\notag
			\\
			&-\sum_{t=1}^k\gamma_t\left[\tau_t \mathrm{KL}(q_t^*\|q_{t-1}^*)+\eta_t \mathrm{KL}(p_t^*\|p_{t-1}^*)+\mu_t\rmBiLin_f(p_{t-1}^*-p_{t-2}^*, q_{t-1}^*-q_{t}^*)\right]\notag
			\\
			&+\sum_{t=1}^k\gamma_t\left[\rmErr_{p_t^*}^x(\delta_{t,1})+\rmErr_{q_t^*}^y(\delta_{t,2})\right].
		\end{align}
		Taking into account Eq.~\eqref{out-layer-1}-\eqref{out-layer-3} and the fact that $p_0^*=p_{-1}^*$ and $q_0^*=q_{-1}^*$, the above inequality Eq.~\eqref{ineq-above} implies that
		\begin{align}\label{main-estimate}
			\sum_{t=1}^k\gamma_t Q(w_t,w)\leq& \gamma_1\eta_1 \mathrm{KL}(p\|p_0^*)-\gamma_k(\eta_k+\lambda_2)\mathrm{KL}(p\|p_k^*)\notag
			\\
			&+\gamma_1\tau_1 \mathrm{KL}(q\|q_0^*)-\gamma_k(\tau_k+\lambda_2) \mathrm{KL}(q\|q_k^*)+4\sum_{t=1}^k\gamma_t (1+\mu_t)\epsilon\notag
			\\
			&-\sum_{t=1}^k \gamma_t\left[\tau_t \mathrm{KL}(q_t^*\|q_{t-1}^*)+\eta_t \mathrm{KL}(p_t^*\|p_{t-1}^*)+\mu_t\rmBiLin_f(p_{t-1}^*-p_{t-2}^*, q_{t-1}^*-q_{t}^*)\right]\notag
			\\
			&-\gamma_k\rmBiLin_f(p_k^*-p_{k-1}^*, q_k^*-q)+\sum_{t=1}^k\gamma_t\left[\rmErr_{p_t^*}^x(\delta_{t,1})+\rmErr_{q_t^*}^y(\delta_{t,2})\right].
		\end{align}
		By Eq.~\eqref{out-layer-1} and Eq.~\eqref{out-layer-4}, we have
		\begin{align}\label{part-estimate}
			&-\sum_{t=1}^k \gamma_t\left[\tau_t \mathrm{KL}(q_t^*\|q_{t-1}^*)+\eta_t \mathrm{KL}(p_t^*\|p_{t-1}^*)+\mu_t\rmBiLin_f(p_{t-1}^*-p_{t-2}^*, q_{t-1}^*-q_{t}^*)\right]\notag
			\\
			\overset{\mathrm{f}}{\leq}&-\sum_{t=2}^k\left[\gamma_t\tau_t\mathrm{KL}(q_t^*\|q_{t-1}^*)+\gamma_{t-1}\eta_{t-1}\mathrm{KL}(p_{t-1}^*\|p_{t-2}^*)\right.\notag
			\\
			&\left.-\gamma_t\mu_t\|p_{t-1}^*-p_{t-2}^*\|_{\mathrm{TV}}\|q_{t-1}^*-q_{t}^*\|_{\mathrm{TV}}\right]-\gamma_k\eta_k\mathrm{KL}(p_k^*\|p_{k-1}^*)\notag
			\\
			\overset{\mathrm{g}}{\leq}&-\gamma_k\eta_k\mathrm{KL}(p_k^*\|p_{k-1}^*),
		\end{align}
		where the
		inequality (f) follows from
		\begin{align}
			\rmBiLin_f(p_{t-1}^*-p_{t-2}^*, q_{t-1}^*-q_{t}^*)\leq\|p_{t-1}^*-p_{t-2}^*\|_{\mathrm{TV}}\|q_{t-1}^*-q_{t}^*\|_{\mathrm{TV}},
		\end{align}
		and the
		inequality (g) is
		derived from the Pinsker's inequality and the parameter settings in Eq.~\eqref{out-layer-1} and Eq.~\eqref{out-layer-4}.
		
		Combining the two inequalities Eq.~\eqref{main-estimate} and Eq.~\eqref{part-estimate}, we obtain
		\begin{align}\label{estimation-combination-Q}
			\sum_{t=1}^k\gamma_t Q(w_t,w)\leq&\gamma_1\eta_1 \mathrm{KL}(p\|p_0^*)-\gamma_k(\eta_k+\lambda_2)\mathrm{KL}(p\|p_k^*)+\gamma_1\tau_1 \mathrm{KL}(q\|q_0^*)\notag
			\\
			&-\gamma_k(\tau_k+\lambda_2) \mathrm{KL}(q\|q_k^*)-\gamma_k\eta_k\mathrm{KL}(p_k^*\|p_{k-1}^*)-\gamma_k\rmBiLin_f(p_k^*-p_{k-1}^*, q_k^*-q)\notag
			\\
			&+4\sum_{t=1}^k\gamma_t (1+\mu_t)\epsilon+\sum_{t=1}^k\gamma_t\left[\rmErr_{p_t^*}^x(\delta_{t,1})+\rmErr_{q_t^*}^y(\delta_{t,2})\right].
		\end{align}
		The result in Eq.~\eqref{out-layer-result1} then follows from the above inequality \eqref{estimation-combination-Q} and the fact that by Eq.~\eqref{out-layer-4},
		\begin{align}\label{estimation-pfq}
			&-(\tau_k+\lambda_2)\mathrm{KL}(q\|q_k^*)-\eta_k\mathrm{KL}(p_k^*\|p_{k-1}^*)-\rmBiLin_f(p_k^*-p_{k-1}^*, q_k^*-q)\notag
			\\
			\leq&-(\tau_k+\lambda_2)\mathrm{KL}(q\|q_k^*)-\eta_k\mathrm{KL}(p_k^*\|p_{k-1}^*)+\sqrt{\mathrm{KL}(p_k^*\|p_{k-1}^*)}\sqrt{\mathrm{KL}(q\|q_k^*)}\notag
			\\
			\leq&-\left(\tau_k+\lambda_2-\frac{1}{4\eta_k}\right)\mathrm{KL}(q\|q_k^*).
		\end{align}
		Fixing $w=w_*$ in the above inequality \eqref{estimation-combination-Q} and combining Eq.~\eqref{estimation-pfq}, we obtain the result in  Eq.~\eqref{out-layer-result2}. 
		
		Finally, Eq.~\eqref{out-layer-result3} is concluded from similar ideas,
		\begin{align}
			&-(\tau_k+\lambda_2)\mathrm{KL}(q\|q_k^*)-\eta_k\mathrm{KL}(p_k^*\|p_{k-1}^*)-\rmBiLin_f(p_k^*-p_{k-1}^*, q_k^*-q)\notag
			\\
			\leq&-(\tau_k+\lambda_2)\mathrm{KL}(q\|q_k^*)-\eta_k\mathrm{KL}(p_k^*\|p_{k-1}^*)+\sqrt{\mathrm{KL}(p_k^*\|p_{k-1}^*)}\sqrt{\mathrm{KL}(q\|q_k^*)}\notag
			\\
			\leq&-\left(\eta_k-\frac{1}{4(\tau_k+\lambda_2)}\right)\mathrm{KL}(p_k^*\|p_{k-1}^*).
		\end{align}
	\end{proof}

Based on Theorem \ref{out-layer-thm}, we will provide specific instantiation methods for parameters $\{\tau_t\}_{t=1}^k,\{\eta_t\}_{t=1}^k,\{\gamma_t\}_{t=1}^k$. Additionally, leveraging Theorem \ref{main-text-thm-ULA}, we can control the error bounds $\{\delta_{t,1}\}_{t=1}^k,\{\delta_{t,2}\}_{t=1}^k$ (refer to Theorem \ref{out-layer-thm}) of sub-problems. By merging these two aspects, we can achieve a globally convergent result for solving the minimax problem with accelerated convergence rates. Our goal is to establish the convergence of the Max-Min gap, as evaluated at the output solution
\begin{align}
    \Bar{w}_k=\frac{\sum_{t=1}^k\gamma_t w_t}{\sum_{t=1}^k\gamma_t},
\end{align}
will ultimately converge to zero. In addition, We will establish the last iterate convergence of the PAPAL algorithm by considering the KL distance between $w_k^*$ and $w_*$ in KL divergence, as well as the $W_2$ distance between $w_k$ and $w_*$.

For Corollary \ref{main-text-outer-loop-complexity},  we have the KL distance estimation:
    \begin{align}\label{KL-divergence-convergence-1}
        \mathrm{KL}(p_*\|p_T^*)+\frac{1}{2}\mathrm{KL}(q_*\|q_T^*)
        \leq&\mu^T[\mathrm{KL}(q_*\|q_0^*)+\mathrm{KL}(p_*\|p_0^*)]\notag
        \\
        &+\lambda_2^{-1}\left[8\epsilon+(10\lambda_1+\lambda_2+32)T^{-J}\right],
    \end{align}
    and the Wasserstein distance estimation:
    \begin{align}
    \label{Wassenstain-convergence-1}
        \frac{1}{2}W_2^2(q_T, q_*)+W_2^2(p_T, p_*)
        \leq&\frac{4}{\min\{\alpha_{p_T^*}, \alpha_{q_T^*}\}}\left\{\mu^{T}\left[\mathrm{KL}(q_*\|q_0^*)+\mathrm{KL}(p_*\|p_0^*)\right]\right.\notag
        \\
        &+\left.\lambda_2^{-1}\left[8\epsilon+(10\lambda_1+\lambda_2+32)T^{-J}\right]\right\}+3T^{-2J},
    \end{align}
    with respect to the MNE $w_*=(p_*,q_*)$ of Eq.~\eqref{eq:problem_particle}.
\begin{proof}[Proof of Corollary \ref{main-text-outer-loop-complexity}]
    According to the sub-problem error bound, we have
    \begin{equation}
        \frac{2\lambda_1}{\alpha_{p_t^*}}\delta_{t,1}\leq\lambda_1 T^{-2J},\, \, \frac{8\lambda_1\sigma(p_t^*)}{\sqrt{\alpha_{p_t^*}}}\sqrt{\delta_{t,1}}\leq 4\lambda_1 T^{-J},
        \, \, 
        \frac{2\lambda_1}{\alpha_{q_t^*}}\delta_{t,2}\leq\lambda_1 T^{-2J},\, \, \frac{8\lambda_1\sigma(q_t^*)}{\sqrt{\alpha_{q_t^*}}}\sqrt{\delta_{t,2}}\leq 4\lambda_1 T^{-J}.\nonumber
    \end{equation}
    Notice that $Q(\Bar{w},w)$ is convex with respect to $\Bar{w}$ given a fixing $w$. 
    
    According to Jensen's inequality, we have
    \begin{align}
        Q(\Bar{w}_T,w)\leq\frac{\sum_{t=1}^T\gamma_t Q(w_t,w)}{\sum_{t=1}^T\gamma_t},
    \end{align}
    for any $w\in\mathcal{P}_2(\mathcal{X})\times\mathcal{P}_2(\mathcal{Y})$. Moreover, under problem setting in Eq.~\ref{eq:problem_particle} and the optimality condition, we can derive that $p_*\in\cQ_{x}(C), q_*\in\cQ_{y}(C)$ when $w_*$ exists, and $\Bar{p}^*\in\cQ_{x}(C), \Bar{q}^*\in\cQ_{y}(C)$ for any $\Bar{w}=(\Bar{p},\Bar{q})\in\cP(\cX)\times\cP(\cY)$ with $C=1$ (see coefficient $C$ in \eqref{define-auxi-function-1} and \eqref{define-auxi-function-2}).
    
    Therefore, Eq.~\eqref{Max-Min-gap} is derived from Eq.~\eqref{out-layer-result1}(with $w=\bar{w}_T^*$),
    \begin{align}
        0\leq Q(\Bar{w}_T,\bar{w}_T^*)\leq\frac{\sum_{t=1}^T\gamma_t Q(w_T,\bar{w}_T^*)}{\sum_{t=1}^T\gamma_t}\leq&\mu^T\left[\gamma_1\eta_1\mathrm{KL}(p\|p_0^*)+\gamma_1\tau_1\mathrm{KL}(q\|q_0^*)\right]\notag
        \\
        &+\frac{\sum_{t=1}^T\gamma_t}{\sum_{t=1}^T\gamma_t}\left[8\epsilon+(10\lambda_1+\lambda_2+32)T^{-J}\right].
    \end{align}
    In addition, we have
    \begin{align}
        2\eta_t(\tau_t+\lambda_2)-1=2\lambda_2^2\frac{\mu}{(1-\mu)^2}=1, \notag
    \end{align}
    which leads to
    \begin{align}
        \frac{\tau_t+\lambda_2}{2}-\frac{1}{4\eta_t}>0.
    \end{align}
    The result in Eq.~\eqref{KL-divergence-convergence} then follows from Eq.~\eqref{out-layer-result1} 
    (with $w=w_*$) and $Q(w_t,w_*)\geq0$ that
    \begin{align}
        0\leq\sum_{t=1}^T\gamma_t Q(w_t,w_*)\leq&\gamma_1\eta_1\mathrm{KL}(p_*\|p_0^*)-\gamma_T(\eta_T+\lambda_2)\mathrm{KL}(p_*\|p_T^*)+\gamma_1\tau_1\mathrm{KL}(q_*\|q_0^*)\notag\\
        &-\frac{\gamma_T}{2}(\tau_T+\lambda_2)\mathrm{KL}(q_*\|q_T^*)\notag
        \\
        &+\sum_{t=1}^T\gamma^t\left[8\epsilon+(10\lambda_1+\lambda_2+32)T^{-J}\right].
    \end{align}
    Finally, by combining Eq.~\eqref{KL-divergence-convergence}, Talagrand's inequality, and 
    \begin{align}
        W_2^2(\nu, \rho)\leq 2W_2^2(\nu, p)+2W_2^2(p, \rho),\, \, \forall \, \nu, p, \rho\in\mathcal{P}_2(\Pi),
    \end{align}
    we obtain the convergence of Wasserstein distance.
\end{proof}

\section{Global Convergence}
\begin{proof}[Proof of Theorem \ref{main-text-global-convergence}]
    Noting the explicit form of $p_t^*(x)$ and $q_t^*(y)$ in Lemma \ref{estimation-log}, we obtain that $p_t^*$ satisfies log-Sobolev inequality with $\lambda_1\exp\{-4\lambda_2^{-1}\}\lambda_2^{-1}$ and $q_t^*$ satisfies log-Sobolev inequality with $\lambda_1\exp\{-12\lambda_2^{-1}\}\lambda_2^{-1}$ for any $t\geq0$ by combining Lemma \ref{log-sobolev-exp-bounded}. 
    
    In fact, we also have 
    \begin{align}\label{estimation-square}
        \mathbb{E}_{\Bar{p}}[\|\theta\|^2]=&\int\|\theta\|^2\frac{\exp{[-l(\theta)-(\lambda_1/\lambda_2)\|\theta\|^2]}}{\int\exp{[-l(\theta)-(\lambda_1/\lambda_2)\|\theta\|^2]}}d\theta\notag
        \\
        \leq&\exp{(2C)}\int\|\theta\|^2\frac{\exp{[-(\lambda_1/\lambda_2)\|\theta\|^2]}}{\int\exp{[-(\lambda_1/\lambda_2)\|\theta\|^2]}}d\theta\notag
        \\
        =&\frac{\lambda_2 d_\theta}{2\lambda_1}\exp{(2C)},
    \end{align}
    if probability density $\Bar{p}(\theta)\propto\exp{\left(-l(\theta)-\frac{\lambda_1}{\lambda_2}\|\theta\|^2\right)}$ in $\mathbb{R}^{d_\theta}$ with $\|l\|_\infty\leq C$. 
    
    Therefore, we have $(\sigma(p_t^*))^2\leq\frac{\lambda_2m}{2\lambda_1}\exp{\left(\frac{2}{\lambda_2}\right)}$ and $(\sigma(q_t^*))^2\leq\frac{\lambda_2n}{2\lambda_1}\exp{\left(\frac{6}{\lambda_2}\right)}$. According to Theorem \ref{ULA-error-bound}, we set the step size $\iota$ of ULA as
    \begin{align}
    \iota\leq\frac{\lambda_1\lambda_2 T^{-2J}}{32 (3L_1+\lambda_1)^2\max\{m,n\}}\min\left\{\frac{1}{2\exp{\left(\frac{12}{\lambda_2}\right)}},\, \, \frac{\lambda_1}{\lambda_2\exp{\left(\frac{24}{\lambda_2}\right)}},\, \, \frac{\lambda_1^2}{\lambda_2^2\max\{m,n\}\exp{\left(\frac{30}{\lambda_2}\right)}}\right\},\nonumber
    \end{align}
    and sub-problem iterations $T_t$ at each step $t$ 
    {\footnotesize
    \begin{align}
        T_t\geq \iota^{-1}\frac{\lambda_2e^{\frac{12}{\lambda_2}}}{\lambda_1}\left[\max\left\{3,\, \, \frac{24}{\lambda_2}+\log\left(\frac{\lambda_1}{\lambda_2}\right)+2,\, \, \frac{30}{\lambda_2}+\log(\max\{m,n\})+3\log\left(\frac{\lambda_1}{\lambda_2}\right)+1\right\}+2J\log(T)+C\right],
        \nonumber
    \end{align}
    }
    to guarantee that $\max\{\delta_{t,1}, \delta_{t,2}\}\leq\frac{1}{2}\min\{c(p_t^*), c(q_t^*)\}T^{-2J}$ for any $t\geq 1$.

    Under Corollary \ref{main-text-outer-loop-complexity}, we complete the proof.
\end{proof}
\textbf{Background on MALA and the proximal sampler: }
For simplicity, we use $\mathcal{N}$ denote Gaussian distribution. Considering $\pi$ be a target distribution on $\mathbb{R}^{d_{\theta}}$, and denoting $Q$ as a markov transition kernel, we have the Metropolis-Adjusted Langevin algorithm (Algorithm \ref{alg:mala}) with proposal kernel $Q$. In general, the target distribution $\pi\propto\exp\{-f\}$ and the proposal kernel is taken to be one step of the discretized Langevin algorithm, which is formulated as following:
\begin{align}
    Q(\theta,\cdot)=\mathcal{N}(\theta-\eta\nabla f(\theta), 2\eta\mathbf{I}_{d_{\theta}}).
\end{align}
Next, we introduce the proximal sampler (Algorithm \ref{alg:proximal}) which has been mentioned in \citep{altschuler2024faster}. The third step of the Algorithm \ref{alg:proximal} approximates the objective distribution $\pi(\theta|y_k)\propto\exp\{-f(\theta)-\frac{1}{2\eta}\|\theta-y_k\|^2\}$ using MALA with high accuracy.

\begin{algorithm}[h]
\caption{Metropolis-Adjusted Langevin Algorithm (MALA)} 
\label{alg:mala}
{\small
{\bf Input:} 
 number of outer iterations $T$, initial point $\theta_0$ from a starting distribution $\pi_0$, step size $\eta$ 
 \\
{\bf Output:} sequence of samples $\theta_1,\cdots,\theta_T$
\begin{algorithmic}[1]
    \FOR{$t=0$ to $T$}
        \STATE Propose $y_t\sim Q(\theta_t,\cdot)$.
        \STATE Compute the acceptance probability 
        $\alpha_t=\min\left\{1, \frac{\pi(y_t)Q(y_t,\theta_t)}{\pi(\theta_t)Q(\theta_t,y_t)}\right\}$.
        \STATE Draw $u\sim\mathrm{Unif}[0,1]$.
        \IF{$u<\alpha_k$}
        \STATE Accept the proposal: $\theta_{t+1}\leftarrow y_t$.
        \ELSE
        \STATE Reject the proposal: $\theta_{t+1}\leftarrow \theta_t$.
        \ENDIF
    \ENDFOR
\end{algorithmic}
}
\end{algorithm}

\begin{algorithm}[h]
\caption{Proximal Sampler with MALA} 
\label{alg:proximal}
{\small
{\bf Input:} 
 number of outer iterations $T$, initial point $\theta_0$ and step size $\eta$ 
 \\
{\bf Output:} sequence of samples $\theta_1,\cdots,\theta_T$
\begin{algorithmic}[1]
    \FOR{$t=0$ to $T$}
        \STATE Propose $y_t\sim\mathcal{N}(\theta_t, \eta\mathbf{I}_{d_{\theta}})$.
        \STATE Sample $\theta_{t+1}\sim\pi(\theta|y_k)\propto\exp\{-f(\theta)-\frac{1}{2\eta}\|\theta-y_k\|^2\}$ using Algorithm \ref{alg:mala}.
    \ENDFOR
\end{algorithmic}
}
\end{algorithm}

\begin{proof}[Proof of Corollary \ref{remark-MALA}]
    According to the explicit expressions of $p_t^*$ and $q_t^*$, we have that $h^{(t)}(\cdot)$ has $(3L_1+\lambda_1)\lambda_2^{-1}$-Lipschitz gradient and $g^{(t)}(\cdot)$ has $(L_1+\lambda_1)\lambda_2^{-1}$-Lipschitz gradient for any $t\geq0$. Therefore, combining \citep[Theorem 5.3]{altschuler2024faster} and recalling that $p_t^*\propto\exp\{-g^{(t)}(x)\}$ satisfies log-Sobolev inequality with $\lambda_1\exp\{-4\lambda_2^{-1}\}\lambda_2^{-1}$ and $q_t^*\propto\exp\{-h^{(t)}(y)\}$ satisfies log-Sobolev inequality with $\lambda_1\exp\{-12\lambda_2^{-1}\}\lambda_2^{-1}$ for any $t\geq0$ in the proof of Theorem \ref{main-text-global-convergence}, we set sub-problem iterations $T_t$ at each step $t$
    \begin{align}
        T_t\geq\tilde{\mathcal{O}}\left(\frac{(3L_1+\lambda_1)e^{\frac{12}{\lambda_2}}}{\lambda_1}(\max\{m,n\})^{1/2}\left(2J\log(T)+1+\lambda_2^{-1}\right)^4\right),
    \end{align}
    to guarantee that $\max\{\delta_{t,1}, \delta_{t,2}\}\leq\frac{1}{2}\min\{c(p_t^*), c(q_t^*)\}T^{-2J}$ for any $t\geq 1$.
\end{proof}


\section{Stochastic Version}
	
	
	For simplicity, we denote $\psi_{\xi}(x):=\psi(x,\xi), \phi_{\xi}(y):=\phi(y,\xi)$ for any random variable $\xi$ and functions $\psi:\cX\times\Lambda\rightarrow\RR, \phi:\cY\times\Lambda\rightarrow\RR$. Considering $N$ i.i.d random variables $(\xi_1,\cdots,\xi_n)$ with $\xi_i=\xi$ for any $i\in[1:N]$, we let $f_{\bar{\xi}_N}(\theta):=\frac{1}{N}\sum_{i=1}^{N}f(\theta,\xi_i)$ for any $f:\Pi\times\Lambda\rightarrow\RR$. In addition, we introduce some elementary symbols for $p(x)\in\cP_2(\cX)$ satisfying LSI with constant $\alpha_{p}$ and $q(y)\in\cP_2(\cY)$ satisfying LSI with constant $\alpha_{q}$, and $\delta>0$:
	\begin{align}
		\rmExpectErr_p^x(\delta):=&\left[\lambda_2+\frac{2\lambda_1}{\alpha_{p}}\right]\delta+\left[32+\frac{8\lambda_1\sigma(p)}{\sqrt{\alpha_{p}}}\right]\sqrt{\delta},\notag
		\\
		\rmExpectErr_q^y(\delta):=&\left[\lambda_2+\frac{2\lambda_1}{\alpha_{q}}\right]\delta+\left[40+\frac{8\lambda_1\sigma(q)}{\sqrt{\alpha_{q}}}\right]\sqrt{\delta},\notag
	\end{align}
	We also let $\rmStoErr_{\phi_N}^y(q):=\left|F_{\phi_{\bar{\xi}_N}}^y(q)-F_{\EE_{\xi}\phi}^y(q)\right|$ and $\rmStoErr_{\psi_N}^x(p):=\left|F_{\psi_{\bar{\xi}_N}}^x(p)-F_{\EE_{\xi}\psi}^x(p)\right|$. To account for stochastic version, a modification of the functions $\hat{\phi}_t$ and $\hat{\psi}_t$ in the PAPAL algorithm iteration step for deterministic version is required. The PAPAL algorithm for stochastic version can be obtained by performing stochastic sampling on functions $\hat{\phi}_t$ and $\hat{\psi}_t$ (i.e. substituting $\hat{\phi}_t$ with $(\hat{\phi}_t)_{\Bar{\xi}_{N,t}}:=-(1+\mu_t)\EE_{\hat{p}_{t-1}}\left[G_{\Bar{\xi}_{N,t}}(\cdot,y)\right]+\mu_t\EE_{\hat{p}_{t-2}}\left[G_{\Bar{\xi}_{N,t}}(\cdot,y)\right]$ and $\hat{\psi}_t$ with $(\hat{\psi}_t)_{\Bar{\xi}_{N,t}}:=\EE_{\hat{q}_{t}}\left[G_{\Bar{\xi}_{N,t}}(x,\cdot)\right]$) for each $t\in[1:T]$ in Algorithm \ref{alg:algorithm}. In this context, $\Bar{\xi}_{N,t}$ denotes the sampling of $N$ i.i.d. random variables $(\xi_1,\cdots,\xi_N)$ associated with the $t$-th iteration. In the statement of the following lemma, we use function $(\hat{\phi}_t)_{\Bar{\xi}_N}$ to denote $(\hat{\phi}_t)_{\Bar{\xi}_{N,t}}$, and function $(\hat{\psi}_t)_{\Bar{\xi}_N}$ to denote $(\hat{\psi}_t)_{\Bar{\xi}_{N,t}}$ for consistency.
	
	\begin{lemma}\label{stocastic-out-layer-thm}
		We set
		\begin{align}
			\gamma_t\mu_t&=\gamma_{t-1},\label{sto-out-layer-1}\\
			\gamma_t\tau_t&\leq \gamma_{t-1}(\tau_{t-1}+\lambda_2),\label{sto-out-layer-2}\\
			\gamma_t\eta_t&\leq\gamma_{t-1}(\eta_{t-1}+\lambda_2),\label{sto-out-layer-3}\\
			\tau_t\eta_{t-1}&\geq\mu_t,\label{sto-out-layer-4}\\
			\mu_t&\leq1,\label{sto-out-layer-6}
		\end{align}
		and assume that subproblem error bounds at each step-$t$ satisfy
		\begin{align}
			\mathrm{KL}(p_t\|p_t^*)&\leq\delta_{t,1},\, \mathrm{KL}(q_t\|q_t^*)\leq\delta_{t,2},\label{sto-out-layer-5}
		\end{align}
		then
		\begin{align}
			\sum_{t=1}^k\gamma_t Q(w_t,w)\leq&\gamma_1\eta_{1}\mathrm{KL}(p\|p_0^*)-\gamma_k(\eta_k+\lambda_2)\mathrm{KL}(p\|p_k^*)+\gamma_1\tau_1\mathrm{KL}(q\|q_0^*)\notag
			\\
			&-\gamma_k\left(\tau_k+\lambda_2-\frac{1}{4\eta_k}\right)\mathrm{KL}(q\|q_k^*)\notag
			\\    &+\sum_{t=1}^k\gamma_t\left[4(1+\mu_t)\epsilon+\rmExpectErr_{p_t^*}^x(\delta_{t,1})+\rmExpectErr_{q_t^*}^y(\delta_{t,2})\right]
			\notag
			\\
			&+\sum_{t=1}^k\gamma_t\left[\rmStoErr_{(\hat{\phi}_t)_n}^y(q_t^*)+\rmStoErr_{(\hat{\phi}_t)_n}^y(q)+\rmStoErr_{(\hat{\psi}_t)_n}^x(p_t^*)+\rmStoErr_{(\hat{\psi}_t)_n}^x(p)\right],
		\end{align}
	for any $\epsilon>0$ and $p\in\cQ_{x}(C_1), q\in\cQ_{y}(C_2)$ with probability at least $1-{\delta}$ for sample size $M=\Theta(\epsilon^{-2}[\log(T\delta^{-1})+d\log(1+L_0\sqrt{\lambda_2/\lambda_1}\epsilon^{-1})])$ in each iteration where $d=\max\{m,n\}$ . Moreover, we have
	\begin{align}
		&\gamma_k\left(\tau_k+\lambda_2-\frac{1}{4\eta_k}\right)\mathrm{KL}(q_*\|q_k^*)\notag
        \\
        \leq&\gamma_1\eta_1\mathrm{KL}(p_*\|p_0^*)+\gamma_1\tau_1\mathrm{KL}(q_*\|q_0^*)+\sum_{t=1}^k\gamma_t\left[4(1+\mu_t)\epsilon+\rmExpectErr_{p_t^*}(\delta_{t,1})+\rmExpectErr_{q_t^*}(\delta_{t,2})\right]\notag
		\\
		&+\sum_{t=1}^k\gamma_t\left[\rmStoErr_{(\hat{\phi}_t)_n}^y(q_t^*)+\rmStoErr_{(\hat{\phi}_t)_n}^y(q_*)+\rmStoErr_{(\hat{\psi}_t)_n}^x(p_t^*)+\rmStoErr_{(\hat{\psi}_t)_n}^x(p_*)\right],\label{sto-out-layer-result2}
	\end{align}
	and
	\begin{align}
		&\gamma_k\left(\eta_k-\frac{1}{4(\tau_k+\lambda_2)}\right)\mathrm{KL}(p_k^*\|p_{k-1}^*)\notag
        \\
        \leq&\gamma_1\eta_1\mathrm{KL}(p_*\|p_0^*)+\gamma_1\tau_1\mathrm{KL}(q_*\|q_0^*)+\sum_{t=1}^k\gamma_t\left[4(1+\mu_t)\epsilon+\rmExpectErr_{p_t^*}(\delta_{t,1})+\rmExpectErr_{q_t^*}(\delta_{t,2})\right]\notag
		\\
		&+\sum_{t=1}^k\gamma_t\left[\rmStoErr_{(\hat{\phi}_t)_n}^y(q_t^*)+\rmStoErr_{(\hat{\phi}_t)_n}^y(q_*)+\rmStoErr_{(\hat{\psi}_t)_n}^x(p_t^*)+\rmStoErr_{(\hat{\psi}_t)_n}^x(p_*)\right]\label{sto-out-layer-result3}
	\end{align}
	with respect to the MNE $w_*=(p_*, q_*)$ of Eq.~\eqref{eq:sto-finite-particle-problem}.
	\end{lemma}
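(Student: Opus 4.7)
The proof follows the template of Theorem \ref{out-layer-thm}, but must additionally track the finite-sample Monte Carlo approximation of $\mathbb{E}[G(x,y,\xi)]$ inside the sub-problem objectives. The strategy is to first invoke the stochastic analogue of the sub-problem optimality identities, then decompose the error into the particle part (already handled by Lemma \ref{finite-particle-approximation}) and a new stochastic part (captured by the $\rmStoErr$ quantities), and finally run the same telescoping and cross-term AM-GM argument as in the deterministic proof.

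In detail, I first observe that because the stochastic PAPAL sub-problems minimize $F^y_{(\hat{\phi}_t)_{\Bar{\xi}_N}}(q) + \tau_t\mathrm{KL}(q\|q_{t-1}^*)$ and $F^x_{(\hat{\psi}_t)_{\Bar{\xi}_N}}(p) + \eta_t\mathrm{KL}(p\|p_{t-1}^*)$, Lemma \ref{lemma:subproblem-optimal} yields exact analogues of Eq.~\eqref{optimal-y} and Eq.~\eqref{optimal-x} with $\hat{\phi}_t, \hat{\psi}_t$ replaced by their $\Bar{\xi}_N$-versions. To connect these back to the true expected functionals $\phi_t, \psi_t$, I insert a two-step telescoping at both the minimizer and the comparison point,
\begin{align*}
F^y_{(\hat{\phi}_t)_{\Bar{\xi}_N}}(q) - F^y_{\phi_t}(q) = \bigl[F^y_{(\hat{\phi}_t)_{\Bar{\xi}_N}}(q) - F^y_{\hat{\phi}_t}(q)\bigr] + \bigl[F^y_{\hat{\phi}_t}(q) - F^y_{\phi_t}(q)\bigr],
\end{align*}
where the first bracket is exactly $\rmStoErr^y_{(\hat{\phi}_t)_N}(q)$ and the second is bounded by Lemma \ref{finite-particle-approximation} uniformly over $\cQ_\cX(C_1)\times\cQ_\cY(C_2)$. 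Applying this decomposition at $q$ and $q_t^*$ (and symmetrically for $p$) accounts for all four $\rmStoErr$ terms that appear in the statement.

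With these reductions in place, the remainder of the argument duplicates the deterministic proof step by step: Lemma \ref{lemma-estimation-p-barp} controls the $\mathcal{E}$ and $\mathbb{E}[\|\cdot\|^2]$ discrepancies between $q_t$ and $q_t^*$ (and between $p_t$ and $p_t^*$) via $\delta_{t,\cdot}$, and Pinsker's inequality bounds the residual linear terms $\int (\hat{\phi}_t)_{\Bar{\xi}_N}(q_t - q_t^*)\,\mathrm{d}y$. The sole difference from Eq.~\eqref{estimation-phi-2}--Eq.~\eqref{estimation-psi-1} is that $\|(\hat{\phi}_t)_{\Bar{\xi}_N}\|_\infty \leq 3$ is invoked in place of $\|\hat{\phi}_t\|_\infty \leq 3$, so the constants $30$ and $34$ in $\rmErr$ become $32$ and $40$ in $\rmExpectErr$ after absorbing the extra Pinsker contributions. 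The cross-term manipulation Eq.~\eqref{cross-term}--Eq.~\eqref{estimation-cross-item-1} transfers without modification because it relies only on $\|G(\cdot,\cdot,\xi)\|_\infty \leq 1$. Multiplying by $\gamma_t$, summing, telescoping via Eq.~\eqref{sto-out-layer-2}--\eqref{sto-out-layer-3}, and applying AM-GM under Eq.~\eqref{sto-out-layer-4} (exactly as in Eq.~\eqref{estimation-pfq}) yield the main inequality; specializing $w = w_*$ and regrouping the KL terms one of two ways produces Eq.~\eqref{sto-out-layer-result2} and Eq.~\eqref{sto-out-layer-result3}.

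The main obstacle is bookkeeping: ensuring that the $\rmStoErr$ contributions land at precisely the two evaluation points $(q_t^*, q)$ and $(p_t^*, p)$ without proliferating, and that the uniform-in-$t$ high-probability guarantee from Lemma \ref{finite-particle-approximation} survives after a union bound over the $T$ outer iterations, which is what produces the $\log(T\delta^{-1})$ factor in the required sample size $M$. Once the decomposition of errors is carefully organized, the rest of the calculation is a mechanical re-run of Theorem \ref{out-layer-thm} with the new constants.
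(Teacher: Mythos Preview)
Your proposal is correct and follows essentially the same approach as the paper: apply Lemma~\ref{lemma:subproblem-optimal} to the stochastic sub-problem functionals $F^y_{(\hat{\phi}_t)_{\bar{\xi}_N}}$ and $F^x_{(\hat{\psi}_t)_{\bar{\xi}_N}}$, insert $\rmStoErr$ terms to bridge back to the expected functionals, then rerun the deterministic telescoping and AM-GM argument of Theorem~\ref{out-layer-thm}. The only minor imprecision is that the $\rmStoErr$ contribution first lands at $q_t$ (not $q_t^*$) since the gap function $Q(w_t,w)$ is evaluated at $q_t$, and the paper explicitly shifts it to $q_t^*$ via $|\rmStoErr^y_{(\hat{\phi}_t)_N}(q_t^*)-\rmStoErr^y_{(\hat{\phi}_t)_N}(q_t)|\le 2\sqrt{\delta_{t,2}}$ (Pinsker); you allude to this with ``absorbing the extra Pinsker contributions,'' which is exactly the source of the constant increments $30\to 32$ and $34\to 40$.
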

	\begin{proof}
		According to optimality condition (Lemma \ref{lemma:subproblem-optimal}), we have
		\begin{align}
			F_{(\hat{\phi}_t)_{\bar{\xi}_n}}^y(q_t^*)-F_{(\hat{\phi}_t)_{\bar{\xi}_n}}^y(q)=&\tau_t[\mathrm{KL}(q\|q_{t-1}^*)-\mathrm{KL}(q_t^*\|q_{t-1}^*)]-(\tau_t+\lambda_2)\mathrm{KL}(q\|q_t^*),\label{optimal-y-stocas}
			\\
			F_{(\hat{\psi}_t)_{\bar{\xi}_n}}^y(p_t^*)-F_{(\hat{\psi}_t)_{\bar{\xi}_n}}^y(p)=&\eta_t[\mathrm{KL}(p\|p_{t-1}^*)-\mathrm{KL}(p_t^*\|p_{t-1}^*)]-(\eta_t+\lambda_2)\mathrm{KL}(p\|p_t^*).\label{optimal-x-stocas}
		\end{align}
	By using Eq.~\eqref{optimal-y-stocas}, we have
	\begin{align}
		F_{\EE_{\xi}\hat{\phi}_t}^y(q_t)-F_{\EE_{\xi}\hat{\phi}_t}^y(q)=&F_{\EE_{\xi}\hat{\phi}_t}^y(q_t)-F_{(\hat{\phi}_t)_{\bar{\xi}_n}}^y(q_t)+F_{(\hat{\phi}_t)_{\bar{\xi}_n}}^y(q)-F_{\EE_{\xi}\hat{\phi}_t}^y(q)\notag
		\\
		&+\underbrace{F_{(\hat{\phi}_t)_{\bar{\xi}_n}}^y(q_t^*)-F_{(\hat{\phi}_t)_{\bar{\xi}_n}}^y(q)+F_{(\hat{\phi}_t)_{\bar{\xi}_n}}^y(q_t)-F_{(\hat{\phi}_t)_{\bar{\xi}_n}}^y(q_t^*)}_{\text{a}}\notag
		\\
		\underset{\text{b}}{\leq}&\rmStoErr_{(\hat{\phi}_t)_n}^y(q_t^*)+\rmStoErr_{(\hat{\phi}_t)_n}^y(q)+\tau_t\left[\mathrm{KL}(q\|q_{t-1}^*)-\mathrm{KL}(q_t^*||q_{t-1}^*)\right]\notag
		\\
		&-(\tau_t+\lambda_2)\mathrm{KL}(q\|q_t^*)+\left[\lambda_2+\frac{2\lambda_1}{\alpha_{q_t^*}}\right]\delta_{t,2}+\left[12+\frac{8\lambda_1\sigma(q_t^*)}{\sqrt{\alpha_{q_t^*}}}\right]\sqrt{\delta_{t,2}},
	\end{align}
	where (b) is derived from applying Eq.\eqref{estimation-phi-2} to (a) and the estimation that
	$$\left|\rmStoErr_{(\hat{\phi}_t)_n}^y(q_t^*)-\rmStoErr_{(\hat{\phi}_t)_n}^y(q_t)\right|\leq 2\sqrt{\delta_{t,2}},
	$$ 
	which follows from Pinsker's inequality directly. 
	Likewise, we can deduce that
	\begin{align}
		F_{\EE_{\xi}\hat{\psi}_t}^x(p_t)-F_{\EE_{\xi}\hat{\psi}_t}^x(p)\leq&\rmStoErr_{(\hat{\psi}_t)_n}^x(p_t^*)+\rmStoErr_{(\hat{\psi}_t)_n}^x(p)+\eta_t\left[\mathrm{KL}(p\|p_{t-1}^*)-\mathrm{KL}(p_t^*\|p_{t-1}^*)\right]\notag
		\\
		&-(\eta_t+\lambda_2)\mathrm{KL}(p\|p_t^*)+\left[\lambda_2+\frac{2\lambda_1}{\alpha_{p_t^*}}\right]\delta_{t,1}+\left[4+\frac{8\lambda_1\sigma(p_t^*)}{\sqrt{\alpha_{p_t^*}}}\right]\sqrt{\delta_{t,1}}.
	\end{align}
	By utilizing the boundedness of $f$ and carefully selecting constants $C_1$, $C_2$, we can adjust the number of sampling points for both $\hat{p}_t$ and $\hat{q}_t$ to obtain the following result
	\begin{align}\label{sto-finite-particle-condition}
		\left|\rmBiLinSf(p_t-\hat{p}_t,q)\right|\leq\epsilon,\, \, \, \, 
		\left|\rmBiLinSf(p,q_t-\hat{q}_t)\right|\leq\epsilon,
	\end{align}
	with probability $1-\delta$ for all $q(y)\in\cQ_\cY(C_2)$, $p(x)\in\cQ_\cX(C_1)$ uniformly (Refer to Lemma \ref{finite-particle-approximation} in Appendix \ref{discre}). In order to deal with finite particles, we denote $(\EE_{\xi}\phi_t)(y)=-(1+\mu_t)\EE_{p_{t-1}}\left[\EE_{\xi}f(x,y,\xi)\right]+\mu_t\EE_{p_{t-2}}\left[\EE_{\xi}f(x,y,\xi)\right]$, $(\EE_{\xi}\psi_t)(x)=\EE_{q_t}\left[\EE_{\xi}f(x,y,\xi)\right]$. Similar to the proof of Theorem \ref{out-layer-thm}, we leverage the definition of the Max-Min gap function $Q$ to arrive at
	\begin{align}
		\sum_{t=1}^k\gamma_tQ(w_t,w)\leq&\sum_{t=1}^k\gamma_t\left[\eta_t\mathrm{KL}(p\|p_{t-1}^*)-(\eta_t+\lambda_2)\mathrm{KL}(p||p_t^*)\right]\notag
		\\
		&+\sum_{t=1}^k\gamma_t\left[\tau_t\mathrm{KL}(q\|q_{t-1}^*)-(\tau_t+\lambda_2)\mathrm{KL}(q\|q_t^*)\right]+4\sum_{t=1}^k\gamma_t(1+\mu_t)\epsilon\notag
		\\
		&+\sum_{t=1}^k\gamma_t\left[\mu_t\rmBiLinSf(p_{t-1}^*-p_{t-2}^*, q_{t-1}^*-q)-\rmBiLinSf(p_{t}^*-p_{t-1}^*, q_{t}^*-q)\right]\notag
		\\
		&-\sum_{t=1}^k\gamma_t\left[\tau_t\mathrm{KL}(q_t^*\|q_{t-1}^*)+\eta_t\mathrm{KL}(p_t^*\|p_{t-1}^*)+\mu_t\rmBiLinSf(p_{t-1}^*-p_{t-2}^*, q_{t-1}^*-q_t^*)\right]\notag
		\\
		&+\sum_{t=1}^k\gamma_t\left[\rmStoErr_{(\hat{\phi}_t)_n}^y(q_t^*)+\rmStoErr_{(\hat{\phi}_t)_n}^y(q)+\rmStoErr_{(\hat{\psi}_t)_n}^x(p_t^*)+\rmStoErr_{(\hat{\psi}_t)_n}^x(p)\right]\notag
		\\
		&+\sum_{t=1}^{k}\gamma_t\left[\rmExpectErr_{p_t^*}^x(\delta_{t,1})+\rmExpectErr_{q_t^*}^x(\delta_{t,2})\right].
	\end{align}
	Therefore, it's direct to obtain 
	\begin{align}
		\sum_{t=1}^k\gamma_t Q(w_t,w)\leq&\gamma_1\eta_1\mathrm{KL}(p\|p_0^*)-\gamma_k(\eta_k+\lambda_2)\mathrm{KL}(p\|p_k^*)+\gamma_1\tau_1\mathrm{KL}(q\|q_0^*)\notag
		\\
		&-\gamma_k(\tau_k+\lambda_2)\mathrm{KL}(q\|q_k^*)-\gamma_k\eta_k\mathrm{KL}(p_k^*\|p_{k-1}^*)-\gamma_k\rmBiLinSf(p_k^*-p_{k-1}^*,q_k^*-q)\notag
		\\
		&+\sum_{t=1}^k\gamma_t\left[\rmStoErr_{(\hat{\phi}_t)_n}^y(q_t^*)+\rmStoErr_{(\hat{\phi}_t)_n}^y(q)+\rmStoErr_{(\hat{\psi}_t)_n}^x(p_t^*)+\rmStoErr_{(\hat{\psi}_t)_n}^x(p)\right]\notag
		\\
		&+\sum_{t=1}^k\gamma_t\left[4(1+\mu_t)\epsilon+\rmExpectErr_{p_t^*}^x(\delta_{t,1})+\rmExpectErr_{q_t^*}^x(\delta_{t,2})\right].
	\end{align}
	In light of the prior analysis, we can complete the proof of our lemma by following the same steps outlined in Theorem \ref{out-layer-thm}. 
	\end{proof}

    By utilizing Assumptions \ref{Sto-Assumptions1}, \ref{Sto-Assumptions2} and \ref{Sto-Assumptions3}, and applying $\hat{\phi}_t(y,\xi)$ and $\hat{\psi}_t(x,\xi)$ to Lemma \ref{finite-particle-approximation}, we can derive that the discretization error of finite particles on $\Lambda$ satisfies
    \begin{align}
        \rmStoErr_{(\hat{\phi}_t)_N}^y(q)\leq\epsilon,\, \rmStoErr_{(\hat{\psi}_t)_N}^x(p)\leq\epsilon,
    \end{align}
    with probability $1-\delta$ for any $p\in\cQ_x(C_1)$ and $q\in\cQ_y(C_2)$, where the required number of particles is
    \begin{align}
        N=\cO\left(\epsilon^{-2}\left[-\log(\delta)+d\log\left(1+L_0r_{\max\{C_1,C_2\}}(\epsilon,\lambda_1,\lambda_2)\epsilon^{-1}\right)\right]\right),
    \end{align}
    and $r_C(\epsilon,\lambda_1,\lambda_2):=8\sqrt{\frac{\lambda_2+C}{\lambda_1}\max\{\log(\epsilon^{-1}), m, n\}}$. Therefore, combining Lemma \ref{stocastic-out-layer-thm} and aforementioned discretization error of finite particles, we obtain stochastic outer loop convergence result Lemma \ref{Stochastic-Outer-Loop-Lemma}. Applying inner loop error bound to Lemma \ref{Stochastic-Outer-Loop-Lemma}, we can derive the global convergence result Theorem \ref{sto-main-text-global-convergence} and Corollary \ref{Sto-col:kl_conv} w.r.t stochastic PAPAL.

\section{Discretization Error of Finite Particles}\label{discre}
\begin{proof}[Proof of Lemma \ref{finite-particle-approximation}] 
    In order to improve the overall clarity of our argument, we concentrate on establishing the estimation in Eq.~\eqref{concentration-1}. We split the proof into two parts.
    \paragraph{Part I}\, Combing Lemma \ref{Tail-bound}, and the definition of $\cQ_y(C_2)$, we have
    \begin{align}
        \left|\iint_{\RR^m\times\{y|\|y\|\geq r_y\}}(\hat{p}_t-p_t)(x)f(x,y)q(y)dydx\right|\leq& 2\exp{\left(\frac{2C_2}{\lambda_2}\right)}\mathbb{P}[\|\hat{y}\|^2\geq r_y^2]\notag
        \\
        \leq&2\exp{\left(\frac{2C_2}{\lambda_2}\right)}\exp{\left(-\frac{\lambda_1 r_y^2}{10\lambda_2}\right)},
    \end{align}
    for any $q\in\cQ_y(C_2)$ with $r_y^2\geq\frac{n\lambda_2}{\lambda_1}$ where $\hat{y}\sim\cN\left(0, \frac{\lambda_2}{2\lambda_1}I_n\right)$. Then, we derive the bound: $$\left|\iint_{\RR^m\times\{y|\|y\|\geq r_y\}}(\hat{p}_t-p_t)(x)f(x,y)q(y)dydx\right|\leq\frac{1}{4}\epsilon$$ for any $q\in\cQ_y(C_2)$ by setting $r_y\geq 4\sqrt{\frac{\lambda_2}{\lambda_1}\max\{\log(\epsilon^{-1}), n\}+2\frac{\lambda_2}{\lambda_1}+2\frac{C_2}{\lambda_1}}$. 
    
    \paragraph{Part II.} We need to consider the uniform bound between $\hat{p}_t$ and $p_t$ when $y\in\BB_{r_y}(\mathbf{0})$ with $r_y=8\sqrt{\frac{\lambda_2+C_2}{\lambda_1}\max\{\log(\epsilon^{-1}), n\}}$. According to Lemma 5.7 in \cite{wainwright2019high}, we have the $\Delta$-covering number (i.e. $N(\Delta;\BB_{r_y}(\mathbf{0}),\|\cdot\|)$) of $\BB_{r_y}(\mathbf{0}):=\left\{y\in\mathbb{R}^n\mid\|y\|\leq r_y\right\}$ in the $\|\cdot\|$-norm obeys the bound
    \begin{align}\label{Covering-number}
        N(\Delta;\BB_{r_y}(\mathbf{0}),\|\cdot\|)\leq \left(1+\frac{2r_y}{\Delta}\right)^n.
    \end{align}
    On the other hand, we obtain
    \begin{align}\label{fixed-point-concentration}
        \mathbb{P}\left[\left|\int_{\RR^m}(\hat{p}_t-p_t)(x)f(x,y)dx\right|>\rho\right]\leq 2\exp{\left(-\frac{\rho^2 M}{2}\right)},
    \end{align}
    under Lemma \ref{Hoeffding's-inequality} when fixing $y\in\BB_{r_y}(\mathbf{0})$. Therefore, we can deduce the concentration inequality on a $\Delta$-cover $\mathcal{C}_{\BB_{r_y}(\mathbf{0}),\Delta}$ of $\BB_{r_y}(\mathbf{0})$
    \begin{align}\label{delta-cover-concentration}
        \mathbb{P}\left[\max\limits_{y\in\mathcal{C}_{\BB_{r_y}(\mathbf{0}),\Delta}}\left|\int_{\RR^m}(\hat{p}_t-p_t)(x)f(x,y)dx\right|>\rho\right]\leq 2\left(1+\frac{2r_y}{\Delta}\right)^n\exp{\left(-\frac{\rho^2 M}{2}\right)},
    \end{align}
    by combining Eq.~\eqref{Covering-number} and Eq.~\eqref{fixed-point-concentration}. Then, the following estimation
    \begin{align}
        \mathbb{P}\left[\max\limits_{y\in\BB_{r_y}(\bf{0})}\left|\int_{\RR^m}(\hat{p}_t-p_t)(x)f(x,y)dx\right|>\rho+L_0\Delta\right]\leq 2\left(1+\frac{2r_y}{\Delta}\right)^n\exp{\left(-\frac{\rho^2 M}{2}\right)},
    \end{align}
    follows from Assumption \ref{ass:a2}, Eq.~\eqref{delta-cover-concentration} and Lemma. 

    Finally, we conclude the result in \eqref{concentration-1} by using \textbf{Part I} and \textbf{Part II} with setting $\Delta=\frac{\epsilon}{8L_0}$, $\rho=\frac{\epsilon}{8}$ and $M\geq128\left[-\log(\delta/2)+n\log\left(1+\frac{16r_yL_0}{\epsilon}\right)\right]\epsilon^{-2}$ and the result in \eqref{concentration-1} can be derived under the similar argument as well.
\end{proof}

\section{Empirical Results}\label{sec:empi}

In this section, we would like to illustrate the motivation of our algorithm compared to weight-driven algorithm -- WFR-DA
\citep{domingo2020mean}. Thus, the theoretical guarantees for particle-based algorithm is important in real practice.

\paragraph{Non-Convex Optimization.} Our algorithm can be recognized as a generalization of non-convex optimization. When the $f(x,y)$ is a constant with respect to $y$, it is a minimization problem.
Our first example is a non-convex minimization task, which
provides us insights to distinguish the optimization on weights and the positions. Figure \ref{fig:w_vs_p} demonstrates that the weight-driven algorithms only ``select" particles close to the solution, while particle-based algorithms move the particles to find the solution. The difference makes weight-driven algorithms need more particles to make sure that the initialized some particles can approximate the solution. Unless the sample size is extremely large, the weight-driven algorithm cannot obtain good solution. In fact, considering the volume of space, the sample size of weight-driven algorithms is exponential with respect to the dimension, which makes the original algorithm (with theoretical guarantees) in \citep{domingo2020mean} hardly usable in high dimensions.
It is worth mentioning that the combination of weight optimization and the particle-based optimization might be better heuristically, but the leading driven force of the dynamics should be the particle-based one. The theoretical analysis for the combined algorithm mainly driven by particle-based force
would be a good future work.

\begin{figure*}[t]
    \centering
        \begin{tabular}{cccc}
  \includegraphics[width=0.45\textwidth]{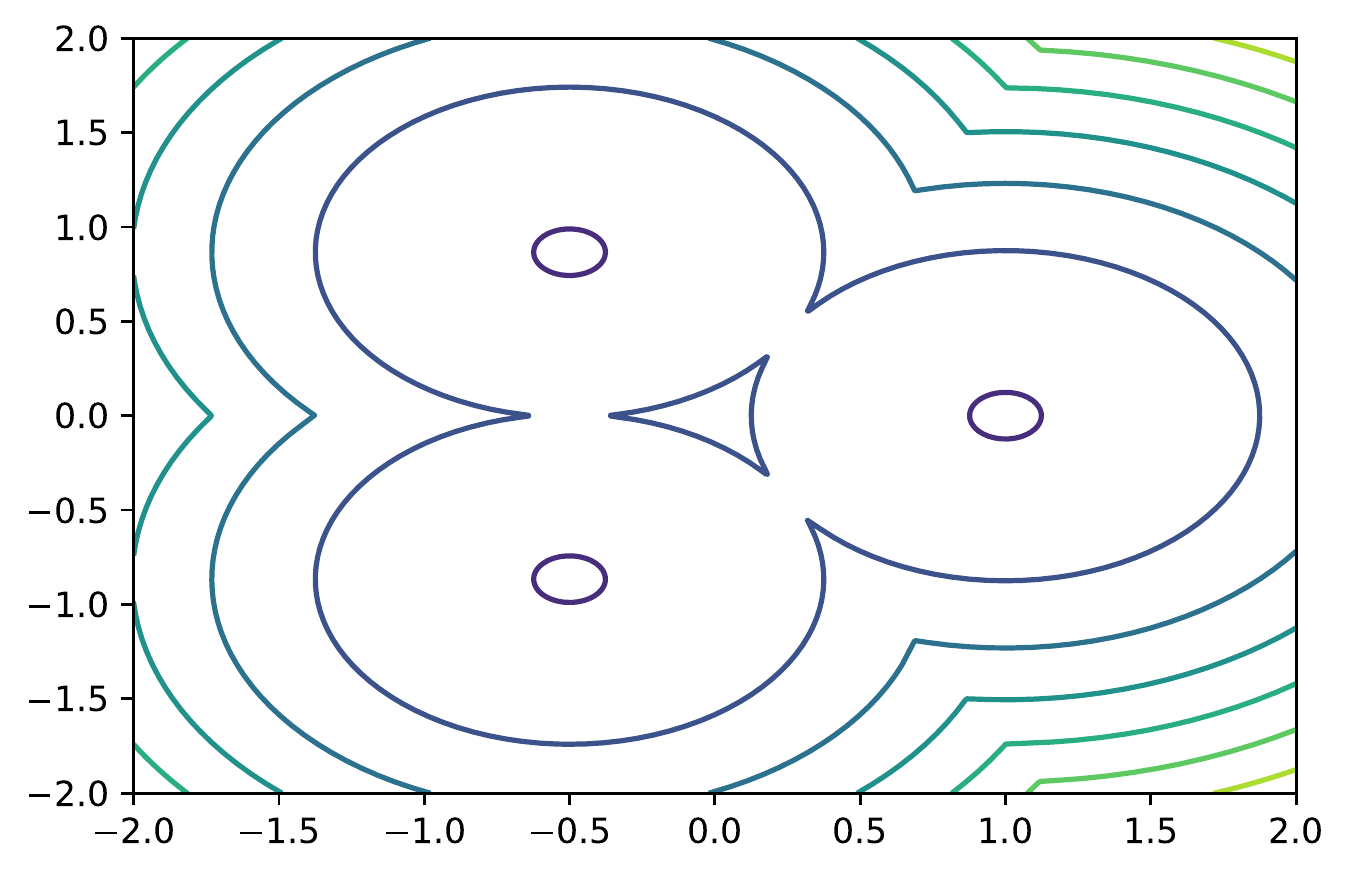}  
        &   \includegraphics[width=0.45\textwidth]{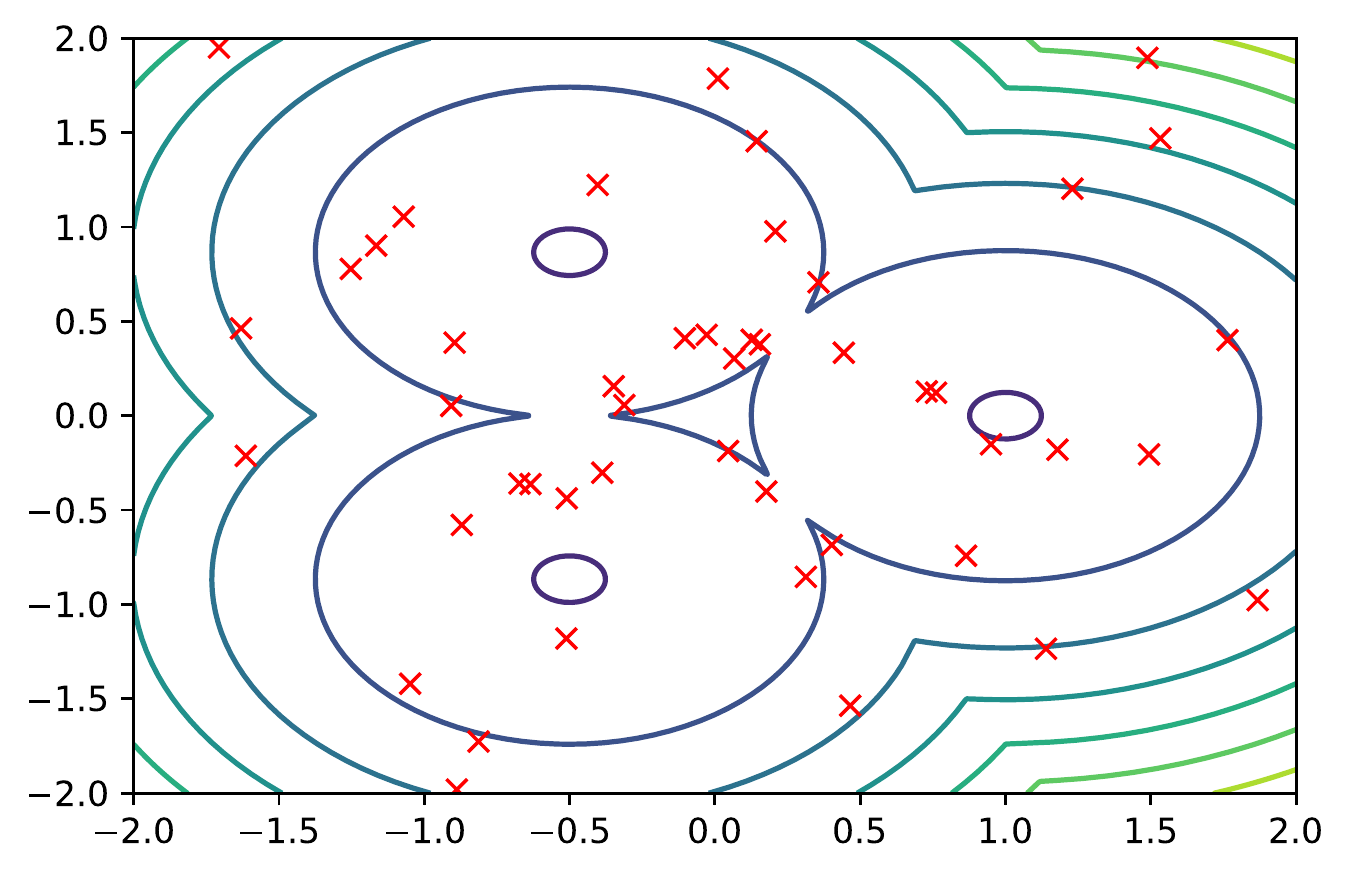}  \\
        {\small Landscape}&
        {\small Initialization}\\
           \includegraphics[width=0.45\textwidth]{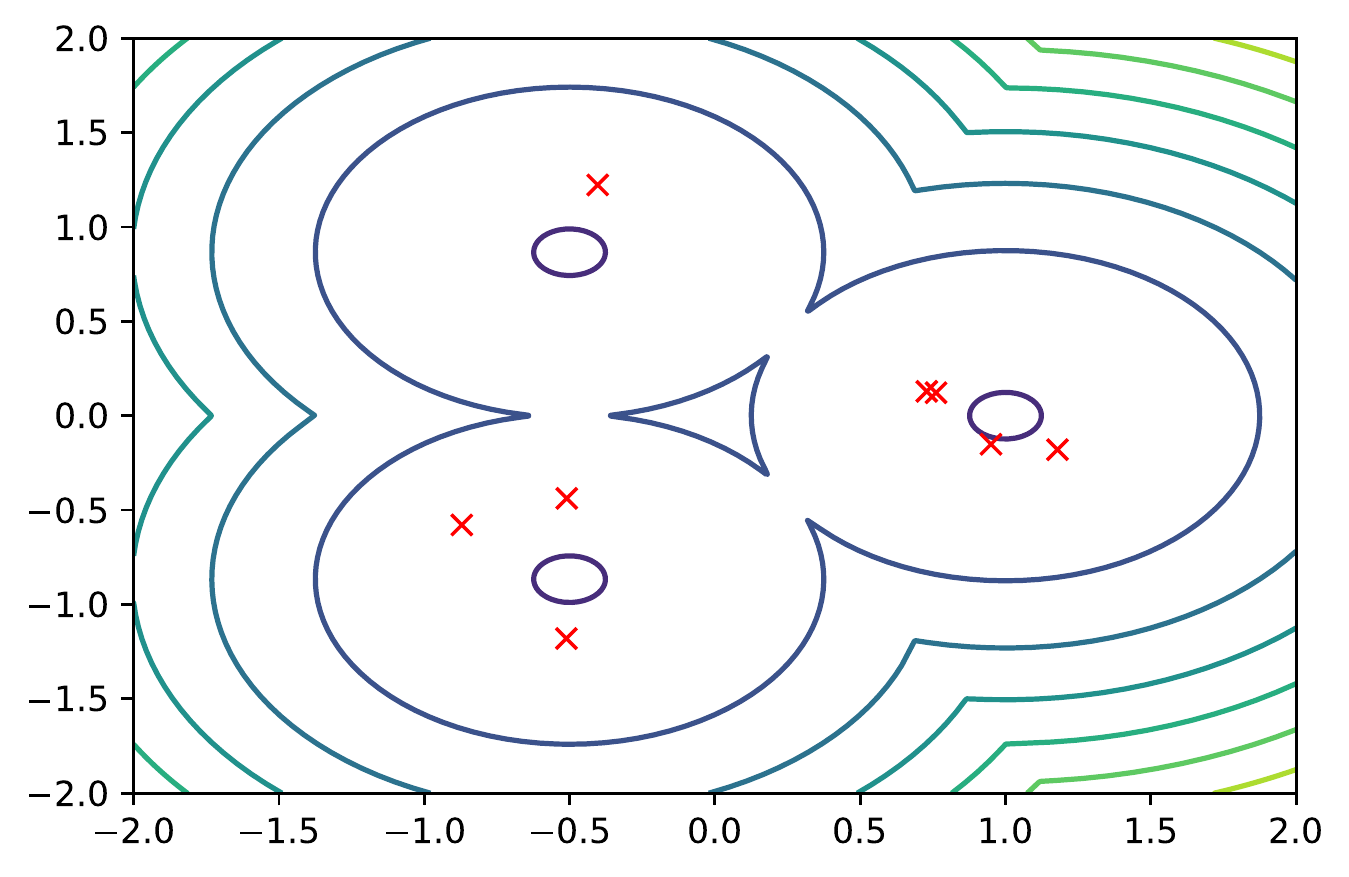}  
        &   \includegraphics[width=0.45\textwidth]{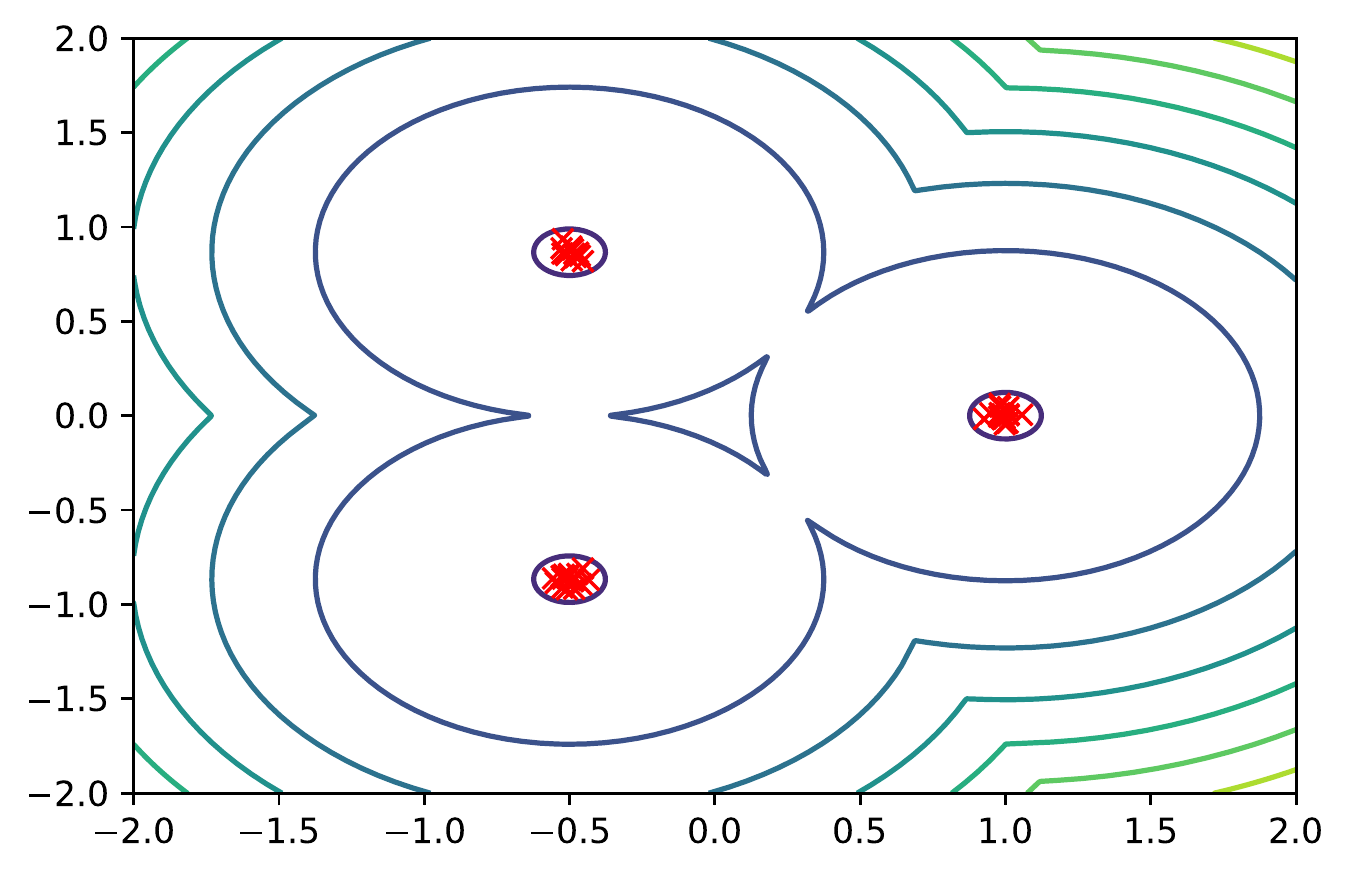}  \\
        {\small Weight-driven solution}&
        {\small Particle-based solution}
    \end{tabular}

    \caption{ Comparison between weight-driven and particle-based algorithms.
    }\label{fig:w_vs_p}
\end{figure*}

\paragraph{Generative Adversarial Networks.} We consider the training process of Generative Adversarial Networks to illustrate the comparison between weight-driven and particle-based algorithms. In particular, when the dimension is large, the weight-driven algorithm would suffer from the curse of dimensionality. For simplicity, we choose the Gaussian target distribution and compute the KL divergence between generated and target distribution. We choose a randomly sampled mean as our target distribution and use a linear model to reproduce it.  We choose the step-size $10^{-3}$ to perform the in our experiments. Figure \ref{fig:gan} indicate that the weight-driven algorithms (WFR-DA) suffer from the curse of dimensionality seriously, while the particle-based algorithm can perform well on high dimension space. The empirical results have justify the importance of the particle-based algorithm and efficacy of PAPAL in high dimension spaces.

\begin{figure*}[t]
    \centering
        \begin{tabular}{cccc}
  \includegraphics[width=0.23\textwidth]{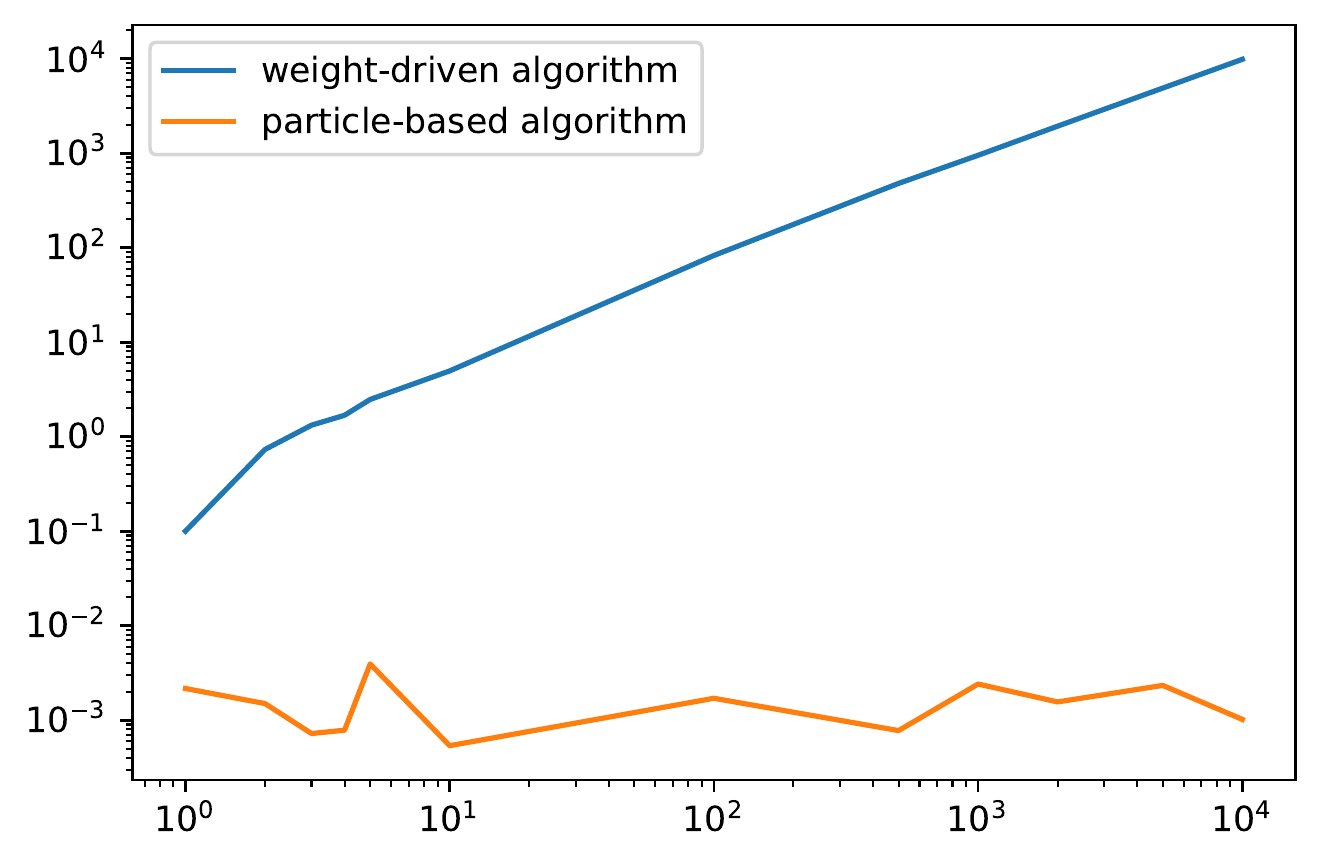}  
        &   \includegraphics[width=0.23\textwidth]{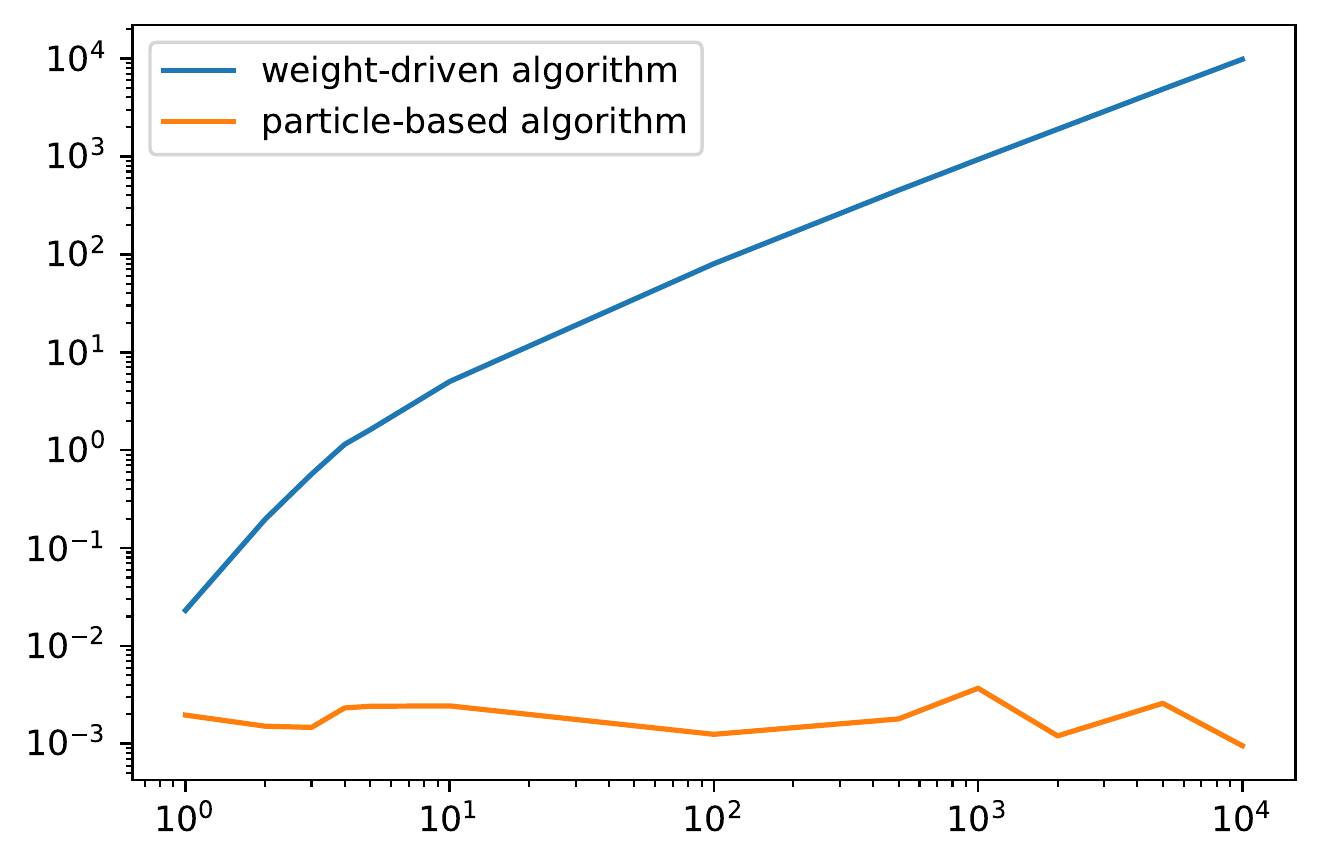}  
        &   \includegraphics[width=0.23\textwidth]{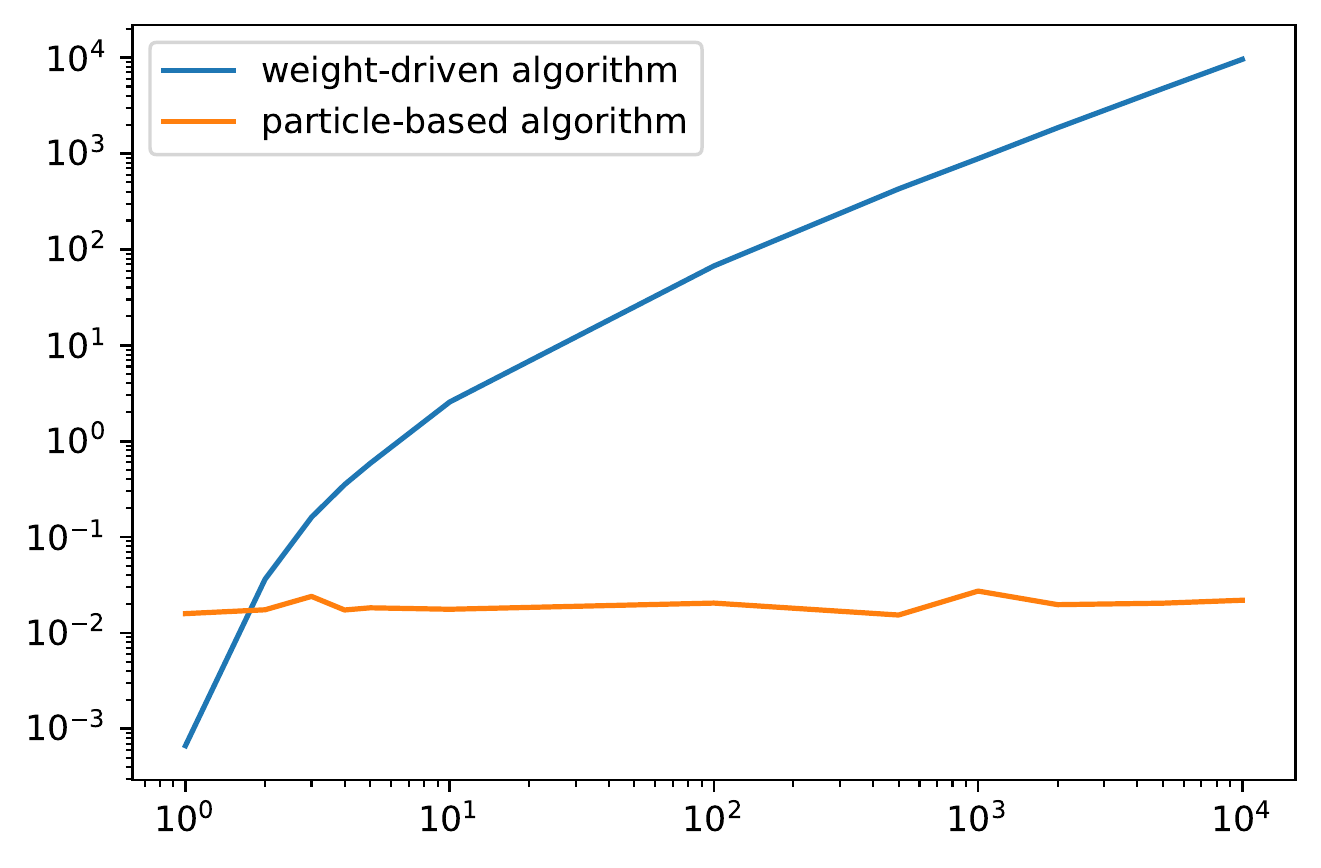}  
        &   \includegraphics[width=0.23\textwidth]{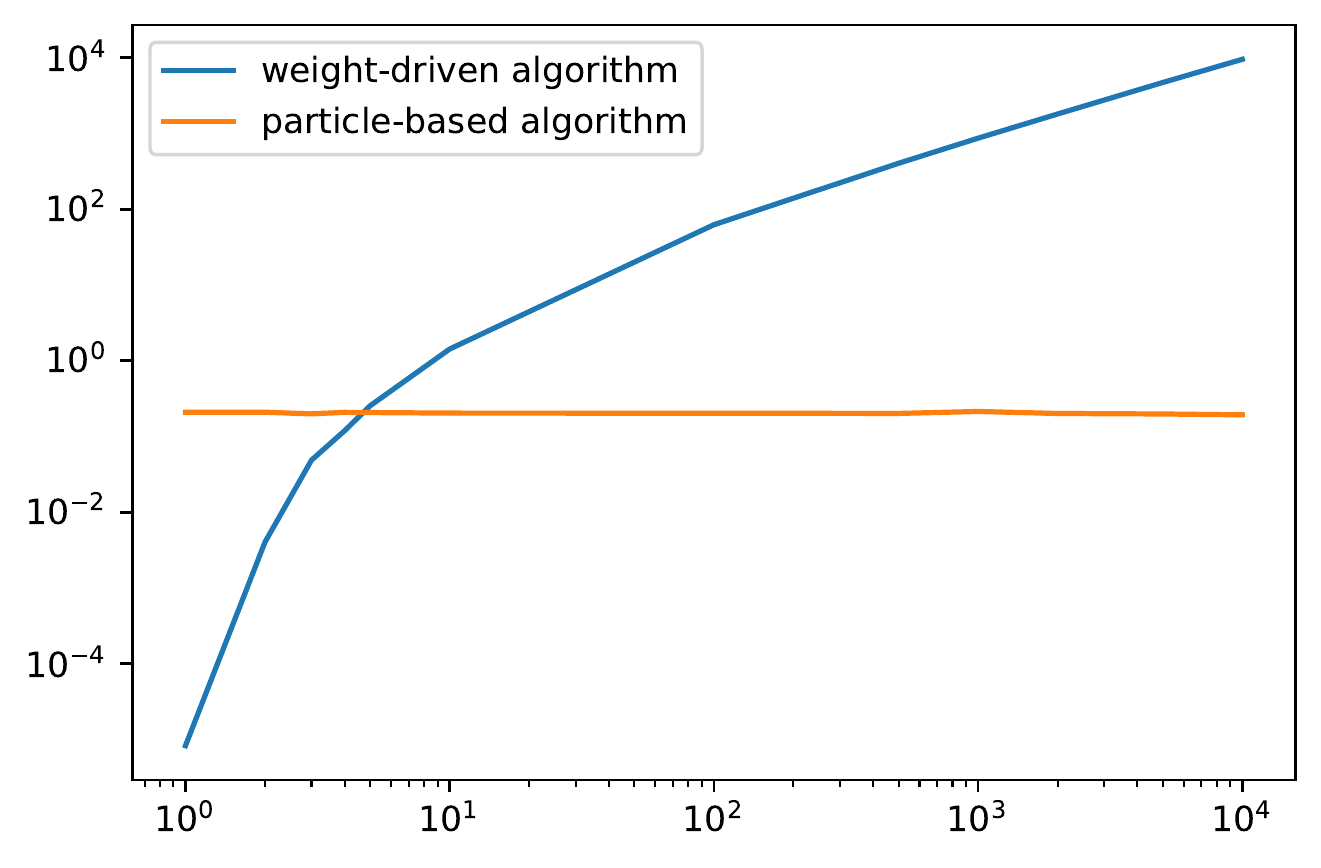}  \\
        {\small $n=5$}&
        {\small $n=10$}&
        {\small $n=100$}&
        {\small $n=1000$}
    \end{tabular}
    \caption{ Illustration of the sample efficiency on Generative Adversarial Networks. ($x$-axis is the dimension; $y$-axis is the KL divergence to measure the distance between real and fake distribution; $n$ denotes the number of particles)
    }\label{fig:gan}
\end{figure*}

\end{document}